\newcommand{\myurl}[1]{\href{#1}{#1}}
\newcommand{\journaltitle}[1]{#1}
\newcommand{\booktitle}[1]{#1}
\newcommand{\yearvolumepages}[3]{\textbf{#2}, #3 (#1)}
\newcommand{\yearvolumeissuepages}[4]{\textbf{#2}:#3, #4 (#1)}
\newcommand{\eqdef}{\coloneqq}
\newcommand{\bC}{\mathbb{C}}
\newcommand{\bD}{\mathbb{D}}
\newcommand{\bZ}{\mathbb{Z}}
\newcommand{\bN}{\mathbb{N}}
\newcommand{\bNz}{\mathbb{N}_0}
\newcommand{\bR}{\mathbb{R}}
\newcommand{\bS}{\mathbb{S}}
\newcommand{\bT}{\mathbb{T}}
\newcommand{\cA}{\mathcal{A}}
\newcommand{\cB}{\mathcal{B}}
\newcommand{\cG}{\mathcal{G}}
\newcommand{\cL}{\mathcal{L}}
\newcommand{\cR}{\mathcal{R}}
\newcommand{\cS}{\mathcal{S}}
\newcommand{\cU}{\mathcal{U}}
\newcommand{\cX}{\mathcal{X}}
\newcommand{\cY}{\mathcal{Y}}
\newcommand{\cTR}{\mathcal{TR}}
\newcommand{\cGTR}{\mathcal{GTR}}
\newcommand{\hatL}{\widehat{\cL}}
\newcommand{\Be}{\operatorname{B}}
\newcommand{\E}{\operatorname{E}}
\newcommand{\al}{\alpha}
\newcommand{\be}{\beta}
\newcommand{\Ga}{\Gamma}
\newcommand{\ga}{\gamma}
\newcommand{\de}{\delta}
\newcommand{\eps}{\varepsilon}
\newcommand{\ph}{\varphi}
\newcommand{\Om}{\Omega}
\newcommand{\om}{\omega}
\newcommand{\hatOm}{\widehat{\Om}}
\newcommand{\dif}{\mathrm{d}}
\newcommand{\Mat}{\mathcal{M}}
\newcommand{\lin}{\operatorname{span}}
\newcommand{\conjz}{\overline{z}}
\newcommand{\PureStates}{\operatorname{PS}}
\newcommand{\basic}[3]{b^{(#1)}_{#2,#3}}
\newcommand{\jac}{\mathcal{J}}
\newcommand{\jaccoef}[3]{c^{(#1,#2)}_{#3}}
\newcommand{\BL}{L^\infty_{\lim}([0,1))}
\newcommand{\BLZ}{L^\infty_{0}([0,1))}
\newcommand{\FreqSubspace}{\mathcal{W}}
\newcommand{\purestate}{\sigma}
\newcommand{\matr}[2]{\left[\begin{array}{#1}#2\end{array}\right]}
\newcommand{\bigleftparenthesis}{\left(\vphantom{%
\begin{bmatrix}
0 & 0 & 0 \\
0 & 0 & 0 \\
0 & 0 & 0
\end{bmatrix}
}\right.}
\newcommand{\bigrightparenthesis}{\left.\vphantom{%
\begin{bmatrix}
0 & 0 & 0 \\
0 & 0 & 0 \\
0 & 0 & 0
\end{bmatrix}
}\right)}
\colorlet{lightblue}{blue!20}
\newtheorem{thm}{Theorem}[section]
\newtheorem{prop}[thm]{Proposition}
\newtheorem{lem}[thm]{Lemma}
\theoremstyle{definition}
\newtheorem{example}[thm]{Example}
\newtheorem{defn}[thm]{Definition}
\newtheorem{remark}[thm]{Remark}
\author{Roberto Mois\'{e}s Barrera-Castel\'{a}n,
Egor A. Maximenko,
Gerardo Ramos-Vazquez}
\title{C*-algebras generated by
radial Toeplitz operators\\
on polyanalytic weighted Bergman spaces}
\begin{document}%
\maketitle

\begin{abstract}
In a previous paper
(Radial operators on polyanalytic weighted Bergman spaces,
Bol. Soc. Mat. Mex. 27, 43),
using disk polynomials as an orthonormal basis in the $n$-analytic weighted Bergman space, we showed that for every bounded radial generating symbol $a$,
the associated Toeplitz operator, acting in this space, 
can be identified with a matrix sequence $\gamma(a)$,
where the entries of the matrices are certain integrals involving $a$ and Jacobi polynomials.
In this paper, we suppose that the generating symbols $a$
have finite limits on the boundary and prove that the C*-algebra generated by the corresponding matrix sequences $\gamma(a)$ is the C*-algebra of all matrix sequences having scalar limits at infinity.
We use Kaplansky's noncommutative analog of the Stone--Weierstrass theorem
and some ideas from several papers by
Loaiza, Lozano,
Ram\'{i}rez-Ortega,
Ram\'{i}rez-Mora,
and S\'{a}nchez-Nungaray.
We also prove that for $n\ge 2$, the closure of the set of matrix sequences $\gamma(a)$ is not equal to the generated C*-algebra.

\medskip\noindent
\textbf{Keywords:}
Toeplitz operator,
radial operator,
polyanalytic function,
matrix sequence,
pure state,
Jacobi polynomial,
antitriangular matrix.

\medskip\noindent
\textbf{MSC (2020)}:
47L80, 47B35, 22D25, 30H20, 30G20, 33C45.

\end{abstract}

\bigskip
\section*{Financial support}

The first author has been supported by PhD scholarship
(CONACYT, Mexico).

\medskip\noindent
The second author has been supported by
CONACYT (Mexico) project ``Ciencia de Frontera'' FORDECYT-PRONACES/61517/2020
and by IPN-SIP projects
(Instituto Polit\'{e}cnico Nacional, Mexico).

\medskip\noindent
The third author has been supported by postdoctoral grant
(CONACYT, Mexico).

\clearpage

\tableofcontents

\bigskip\bigskip\bigskip
\section{Introduction and main results}

There are various deep results about Toeplitz operators with bounded symbols acting on (analytic) Bergman and Fock spaces,
including descriptions of the C*-algebras generated by Toeplitz operators with bounded symbols
\cite{BauerFulsche2020,Hagger2021,Xia2015}
and descriptions of Toeplitz operators invariant under some group actions
\cite{Vasilevski2008book,
GrudskyQuirogaVasilevski2006,
DawsonOlafssonQuiroga2015,
HerreraMaximenkoRamos2022,
QuirogaSanchez2021}.
In particular, in a series of
papers~\cite{Suarez2008,
GrudskyMaximenkoVasilevski2013,
BauerHerreraVasilevski2014,
HerreraVasilevskiMaximenko2015}
it was shown that the norm closure of the set of all radial Toeplitz operators with bounded measurable radial symbols is isomorphic and isometric to the C*-algebra of bounded sequences that slowly oscillate in the sense of Schmidt
(in other words, the sequences that are uniformly continuous with respect to the $\log$-distance).

In polyanalytic spaces,
the situation is usually more complicated.
The polyanalytic Bergman or Fock spaces and their applications have been thoroughly studied by many authors; see references in
\cite{AbreuFeichtinger2014,
LealMaximenkoRamos2021,
Youssfi2021}.

Recently, various authors investigated the behavior of vertical and angular Toeplitz operators acting on the polyanalytic Bergman or Fock spaces,
supposing that 
the generating symbols are bounded and have limits at the boundary of the domain
\cite{RamirezSanchez2015,
LoaizaRamirez2017,
SanchezGonzalezLopezArroyo2018,
RamirezRamirezSanchez2019,ArroyoSanchez2021}.
In particular, Ram\'{i}rez-Ortega and S\'{a}nchez-Nungaray proved~\cite{RamirezSanchez2015}
that the C*-algebra generated by such vertical Toeplitz operators in the $n$-analytic Bergman space over the upper half-plane is isomerically isomorphic to the C*-algebra of continuous matrix functions on $(0,+\infty)$ having scalar limits at $0$ and $+\infty$.
There are also similar results for C*-algebras of vertical and angular Toeplitz operators acting in harmonic and polyharmonic spaces~\cite{LoaizaLozano2013,LoaizaLozano2015,LoaizaMoralesRamirez2021}.
The authors of these papers used the technique of pure state separation and Kaplansky's noncommutative analog of Stone--Weierstrass theorem~\cite{Kaplansky1951}.
We also mention other papers about Toeplitz operators acting in polyanalytic Bergman spaces~\cite{CuckovicLe2012,HutnikHutnikova2015,RozenblumVasilevski2018,Vasilevski1999,RamirezRamirezMorales2020,
Hagger2022}.

In~\cite{BMR2021},
we proved that the radial operators acting in the $n$-analytic Bergman space $\cA^2(\bD,\mu_\al)$
can be identified with bounded matrix families consisting of square complex matrices of orders $1,\ldots,n,n,n,\ldots$:
\[
A=(A_\xi)_{\xi=-n+1}^\infty,\qquad
A_\xi\in\Mat_{\min\{n+\xi,n\}},\qquad
\sup_{-n+1<\xi<+\infty}\|A_\xi\|<+\infty.
\]
In other words, the von Neumann algebra of all bounded radial operators acting in $\cA^2(\bD,\mu_\al)$
is spatially equivalent to the following direct sum of matrix algebras:
\begin{equation}
\label{eq:cS_def_intro}
\cS_n
\eqdef
\bigoplus_{\xi=-n+1}^\infty
\Mat_{\min\{n+\xi,n\}}
=\Mat_1\oplus\Mat_2\oplus\cdots\oplus\Mat_{n-1}\oplus
\Mat_n\oplus\Mat_n\oplus\Mat_n\oplus\cdots.
\end{equation}
We use the term ``frequency'' for the variable $\xi$ because~\eqref{eq:cS_def_intro} is the ``Fourier decomposition'' of the von Neumann algebra of radial operators.
More formally, we deal with a unitary representation of the group $\bT=\{z\in\bC\colon\ |z|=1\}$, and $\xi$ runs through a subset of the dual group $\bZ$.

In particular, if $a\in L^\infty([0,1))$ and $\widetilde{a}(z)=a(|z|)$,
then the radial Toeplitz operator $T_{n,\al,\widetilde{a}}$
acting in $\cA_n^2(\bD,\mu_\al)$
can be identified with the matrix sequence
$\ga_{n,\al}(a)\in\cS_n$,
where the components of the matrices $\ga_{n,\al}(a)_\xi$
are explicitly given as certain integrals involving $a$ and Jacobi polynomials, see Section~\ref{sec:radial_Toeplitz_operators_as_matrix_sequences}.
The situation is similar for radial operators in the Bargmann--Segal--Fock space~\cite{MaximenkoTelleriaRomero2020}, but the Laguerre polynomials are used there instead of Jacobi polynomials.

In this paper, we continue our investigation~\cite{BMR2021}.
Let $\BL$ be the vector space of essentially bounded functions having a limit at $1$:
\begin{equation}
\label{eq:BL_def}
\BL\eqdef
\Bigl\{a\in L^\infty([0,1))\colon\quad
\exists \om\in\bC\quad
\lim_{r\to 1}a(r)=\om
\Bigr\}.
\end{equation}
We denote by $\cGTR_{n,\al}$ the set (vector space) of all radial Toeplitz operators
$T_{n,\al,\widetilde{a}}$
associated to such generating symbols and by $\cTR_{n,\al}$ the C*-algebra generated by $\cGTR_{n,\al}$.

On the other hand, we denote by $\cG_{n,\al}$ the set of the
matrix families $\ga_{n,\al}(a)$ corresponding to such radial Toeplitz operators:
\begin{equation}
\label{eq:cG_def}
\cG_{n,\al}
\eqdef
\Bigl\{\ga_{n,\al}(a)\colon\ a\in\BL\Bigr\}.
\end{equation}
Let $\cX_{n,\al}$ be the C*-algebra generated by $\cG_{n,\al}$.
The objective of this paper is to describe $\cX_{n,\al}$.

Let $\cL_n$ be the subalgebra of $\cS_n$ consisting of all matrix sequences
having scalar limits at infinity:
\begin{equation}
\label{eq:cL_def}
\cL_n \eqdef
\Bigl\{
A\in\cS_n\colon\quad
\exists\om\in\bC\quad
\lim_{\xi\to\infty}A_\xi=\om I_n
\Bigr\}.
\end{equation}
Here are the main results of the paper.

\begin{thm}
\label{thm:main}
$\cX_{n,\al}=\cL_n$.
\end{thm}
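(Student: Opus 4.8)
The plan is to prove the two inclusions separately, with $\cL_n$ and $\cX_{n,\al}$ regarded as C*-subalgebras of $\cS_n$. For $\cX_{n,\al}\subseteq\cL_n$ it suffices, since $\cL_n$ is norm-closed and closed under the algebraic operations (a routine check from~\eqref{eq:cL_def}), to verify that every generator lies in $\cL_n$. Thus I would compute $\lim_{\xi\to\infty}\ga_{n,\al}(a)_\xi$ for $a\in\BL$ and show it equals $\om I_n$, where $\om=\lim_{r\to1}a(r)$. Splitting $a=(a-\om)+\om$, this reduces to the asymptotics of the explicit Jacobi-polynomial integrals for the entries of $\ga_{n,\al}(a)_\xi$: as $\xi\to\infty$ the underlying weights concentrate near $r=1$, so the off-diagonal entries tend to $0$ and the diagonal entries tend to $\om$. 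Since $\cL_n$ is a closed C*-algebra containing $\cG_{n,\al}$, it contains the generated algebra $\cX_{n,\al}$.

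For the reverse inclusion $\cL_n\subseteq\cX_{n,\al}$ I would invoke Kaplansky's noncommutative Stone--Weierstrass theorem~\cite{Kaplansky1951} for the C*-subalgebra $\cX_{n,\al}$ of the subhomogeneous C*-algebra $\cL_n$. First I would describe the pure states of $\cL_n$: for each frequency $\xi$ the block evaluation $A\mapsto A_\xi$ is an irreducible representation on $\bC^{d_\xi}$, $d_\xi\eqdef\min\{n+\xi,n\}$, with pure states the vector states $A\mapsto\langle A_\xi v,v\rangle$ ($\|v\|=1$), and in addition there is the one-dimensional representation sending $A$ to its scalar limit $\om$ at infinity as in~\eqref{eq:cL_def}. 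Note that $\cX_{n,\al}$ is unital, since $\ga_{n,\al}(1)=I$, so no pure state vanishes on it and separation from $0$ is automatic. It then remains to check that $\cX_{n,\al}$ separates these pure states, which I would split into: (I) for each $\xi$, the image of $\cX_{n,\al}$ under block evaluation is all of $\Mat_{d_\xi}$ (which separates the vector states inside a fixed block); and (II) distinct blocks, and each block versus the limit character, are separated.

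The main obstacle is (I). For real $a$ the matrices $\ga_{n,\al}(a)_\xi$ are real symmetric in the disk-polynomial basis, so $\cG_{n,\al}$ by itself does not fill a block of order $\ge2$---this is precisely why for $n\ge2$ the closure of $\cG_{n,\al}$ is strictly smaller than $\cX_{n,\al}$---but the generated C*-algebra does fill it. Concretely I would prove that the real-linear map $a\mapsto\ga_{n,\al}(a)_\xi$ sends $\BL$ onto the space of all real symmetric $d_\xi\times d_\xi$ matrices; this reduces to the linear independence of the finitely many weight functions $r\mapsto w^{(\xi)}_{jk}(r)$ ($j\le k$) appearing in the integral formulas for the entries, which follows from the distinctness and triangular structure of the associated Jacobi polynomials together with the fact that $\BL$ contains all polynomials. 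Granting this surjectivity, every matrix unit $E_{jj}$ and every symmetric $E_{jk}+E_{kj}$ lies in the $\xi$-block image of $\cX_{n,\al}$; since products such as $E_{jj}(E_{jk}+E_{kj})=E_{jk}$ escape the symmetric subspace, the C*-algebra generated by these matrices contains all matrix units and hence equals $\Mat_{d_\xi}$. Controlling the Jacobi integrals well enough to establish this surjectivity is the technical heart of the argument.

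Finally, for (II) I would use that block representations of distinct orders (frequencies $\xi<0$ against $\xi\ge0$) are trivially inequivalent, while for two blocks of the same order $n$ it suffices, given (I), to exhibit a single symbol $a$ for which $\ga_{n,\al}(a)_\xi$ and $\ga_{n,\al}(a)_{\xi'}$ have different spectra: then no unitary intertwines the two blocks, the image of $\cX_{n,\al}$ in $\Mat_n\oplus\Mat_n$ is the whole algebra, and the two families of vector states are separated. Such an $a$ is furnished by a symbol strictly monotone near $r=1$, whose block traces $\operatorname{tr}\ga_{n,\al}(a)_\xi=\int_0^1 a(r)\,(\text{weight}_\xi)(r)\,\dif r$ genuinely depend on $\xi$; the same computation separates every block from the limit value $\om$. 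With (I) and (II) in hand, Kaplansky's theorem yields $\cX_{n,\al}=\cL_n$.
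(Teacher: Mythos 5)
Your overall architecture coincides with the paper's: the easy inclusion $\cX_{n,\al}\subseteq\cL_n$ via the limit behaviour of the Jacobi integrals (Propositions~\ref{prop:a_has_limit_implies_gamma_has_limit} and~\ref{prop:cX_subseteq_cL}), then Kaplansky's theorem after identifying the pure states of $\cL_n$ (Propositions~\ref{prop:Ln_is_liminal} and~\ref{prop:pure_states_of_Ln}). However, your step (I) — which you yourself call the technical heart — rests on a claim that is false whenever $d_\xi\eqdef\min\{n+\xi,n\}\ge 3$. You assert that the real-linear map $a\mapsto\ga_{n,\al}(a)_\xi$ sends $\BL$ \emph{onto} the real symmetric $d_\xi\times d_\xi$ matrices, and that this reduces to linear independence of the entry weights. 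But those weights are
\[
\jac_j^{(\al,|\xi|)}(t)\,\jac_k^{(\al,|\xi|)}(t)
=\jaccoef{\al}{|\xi|}{j}\,\jaccoef{\al}{|\xi|}{k}\,
(1-t)^{\al}\,t^{|\xi|}\,
Q_j^{(\al,|\xi|)}(t)\,Q_k^{(\al,|\xi|)}(t),
\]
and the products $Q_j^{(\al,|\xi|)}Q_k^{(\al,|\xi|)}$ with $0\le j\le k\le d_\xi-1$ are $d_\xi(d_\xi+1)/2$ polynomials all of degree at most $2d_\xi-2$, i.e.\ they lie in a space of dimension $2d_\xi-1$. Since $d_\xi(d_\xi+1)/2>2d_\xi-1$ for $d_\xi\ge3$, these weights are necessarily linearly \emph{dependent}, and the image of $a\mapsto\ga_{n,\al}(a)_\xi$ is a subspace of the symmetric matrices of dimension at most $2d_\xi-1$, hence proper. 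Consequently you cannot obtain every $\E_{j,j}$ and every $\E_{j,k}+\E_{k,j}$ as the $\xi$-block of a single generator: linear combinations of generators are genuinely too poor, and no sharpening of the linear-independence argument can repair this.

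The repair has to use products of generators, i.e.\ the multiplicative structure of $\cX_{n,\al}$, and that is exactly what the paper does: it takes the symbols $g_p(t)=Q_p^{(\al,0)}(t^2)$, shows that the block $\ga(g_p)_\xi$ is $(p-|\xi|)$-antitriangular (Lemma~\ref{lem:gamma_xi_associated_to_Q}); these antitriangular matrices span precisely the $(2d_\xi-1)$-dimensional image described above, and all matrix units $\E_{p,q}$ are then reconstructed from their \emph{products} via the elimination scheme of Lemma~\ref{lem:construct_basic_matrices_from_generators} and Proposition~\ref{prop:algebra_generated_by_matrices}. Your step (II) is, modulo (I), a legitimate alternative to the paper's treatment: the paper separates distinct frequencies by exhibiting an explicit $X\in\cX_{n,\al}$ whose $\eta$-block is a matrix unit while its $\xi$-block vanishes (using Lemma~\ref{lem:when_gamma_g_is_zero}), whereas you would invoke inequivalence of the two block representations together with the structure of C*-subalgebras of $\Mat_{d_\xi}\oplus\Mat_{d_\eta}$; that route can be made to work, but it still presupposes that each block image is full, i.e.\ (I). As written, your argument establishes the theorem at best for $n\le 2$, where the dimension count happens to be favorable.
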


\begin{thm}
\label{thm:closure_is_not_Cstar_algebra}
For $n\ge 2$,
the closure of $\cG_{n,\al}$ in $\cL_n$ does not coincide with $\cL_n$.
\end{thm}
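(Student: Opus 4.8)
The plan is to exhibit a proper closed linear subspace of $\cL_n$ that already contains $\cG_{n,\al}$, so that the closure of $\cG_{n,\al}$ cannot fill out all of $\cL_n$. The subspace I have in mind is the set of \emph{symmetric} matrix sequences, i.e.\ those $A\in\cL_n$ with $A_\xi=A_\xi^{\mathsf{T}}$ for every frequency $\xi$. This condition is described entrywise by the bounded functionals $A\mapsto A_{\xi;j,k}-A_{\xi;k,j}$, so the symmetric sequences form a closed subspace of $\cL_n$; it is proper as soon as some block has size at least $2$, which happens precisely when $n\ge 2$.

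The heart of the argument is the claim that $\ga_{n,\al}(a)_\xi$ is a symmetric matrix for every $a\in\BL$ and every $\xi$. First I would treat a real-valued symbol $a$. Then $T_{n,\al,\widetilde a}$ is self-adjoint, and since the identification $\ga_{n,\al}$ is induced by a spatial (unitary) equivalence and therefore preserves adjoints, each block $\ga_{n,\al}(a)_\xi$ is Hermitian. It remains to check that its entries are real: this is where I would invoke the explicit integral formula for $\ga_{n,\al}(a)_{\xi;j,k}$ recalled in Section~\ref{sec:radial_Toeplitz_operators_as_matrix_sequences}, whose kernels are built from (real) Jacobi polynomials and real normalizing constants, so that a real symbol $a$ produces real entries. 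A Hermitian matrix with real entries is symmetric, whence $\ga_{n,\al}(a)_\xi=\ga_{n,\al}(a)_\xi^{\mathsf{T}}$ for real $a$. For a general complex $a=a_1+\imagunit a_2$ with $a_1,a_2$ real, linearity of $\ga_{n,\al}$ gives $\ga_{n,\al}(a)_\xi=\ga_{n,\al}(a_1)_\xi+\imagunit\,\ga_{n,\al}(a_2)_\xi$, a sum of two symmetric matrices, hence symmetric. Thus $\cG_{n,\al}$ consists entirely of symmetric matrix sequences. I expect the verification that the entries are real to be the only genuinely computational step, and the main obstacle, since it is tied to the precise form of the basis and of the integral kernels.

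To conclude, fix any frequency $\xi$ for which the block size $\min\{n+\xi,n\}$ is at least $2$; such $\xi$ exist exactly because $n\ge 2$. The functional $\Phi(A)\eqdef A_{\xi;1,2}-A_{\xi;2,1}$ is bounded on $\cL_n$ and, by the symmetry just established, vanishes on $\cG_{n,\al}$, hence on $\overline{\cG_{n,\al}}$. On the other hand $\Phi$ does not vanish on all of $\cL_n$: the sequence equal to the elementary matrix $E_{1,2}$ at the chosen frequency $\xi$ and to $0$ elsewhere lies in $\cL_n$ (its limit is the scalar $0$) and satisfies $\Phi=1$. Therefore $\overline{\cG_{n,\al}}\subseteq\ker\Phi\subsetneq\cL_n$, which proves the theorem. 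This obstruction disappears for $n=1$, where every block is $1\times 1$ and symmetry is automatic; and the passage from $\cG_{n,\al}$ to the generated C*-algebra $\cX_{n,\al}=\cL_n$ genuinely uses products, since symmetry of matrix sequences is preserved under linear combinations and adjoints but not under multiplication.
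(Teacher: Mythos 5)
Your proof is correct, but it takes a genuinely different route from the paper. The paper's proof rests on Proposition~\ref{prop:pure_states_can_coincide_on_gammas}: for $\xi=0$, $\eta=2$ and specially chosen $u,v\in\bS_n$, the two pure states $\purestate_{0,u}$ and $\purestate_{2,v}$ coincide on all of $\cG_{n,\al}$, because an explicit Jacobi-polynomial computation shows that the densities $|F_{\al,0,u}|^2$ and $|F_{\al,2,v}|^2$ from Lemma~\ref{lem:purestate_via_integral} are the same function; the theorem then follows since these two continuous functionals are separated by an element of $\cL_n$. You instead use the constraint $\ga_{n,\al}(a)_\xi^\top=\ga_{n,\al}(a)_\xi$, which is exactly the paper's Remark~\ref{rem:gammas_are_symmetric}, and conclude via the proper closed subspace of symmetric sequences. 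Both proofs share the same skeleton (a bounded linear functional vanishing on $\cG_{n,\al}$ but not on $\cL_n$); yours is the more elementary of the two, since it needs no computation with Jacobi polynomials at all. In fact your one ``computational step'' can be skipped: by~\eqref{eq:gamma_as_integral_jac} the entry $\be_{a,\al,\xi,j,k}=\int_0^1 a(\sqrt{t})\,\jac_j^{(\al,|\xi|)}(t)\,\jac_k^{(\al,|\xi|)}(t)\,\dif{}t$ is manifestly symmetric in $(j,k)$ for \emph{any} complex symbol $a$, so the detour through self-adjointness for real symbols plus complex linearity is sound but unnecessary. What the paper's longer route buys is information your obstruction misses: the symmetry defect is a within-block constraint at a single frequency, whereas Proposition~\ref{prop:pure_states_can_coincide_on_gammas} exhibits a \emph{cross-frequency} coincidence of pure states, which is precisely the difficulty announced in the introduction and the reason the Kaplansky machinery with products of generators is needed in Sections~\ref{sec:algebra_gammas_fixed_frequency}--\ref{sec:separate_pure_states_associated_to_different_frequencies_when_the_lower_frequency_is_negative}. (One cosmetic point: the paper indexes matrix entries from $0$, so your functional should read $A\mapsto A_{\xi;0,1}-A_{\xi;1,0}$ and your witness sequence should use $\E_{0,1}$, so that the argument covers $n=2$.)
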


For $n=1$ (analytic case), $\cL_n$ is just the space $c(\bNz)$ of converging sequences,
and we prove that $\cG_{1,\al}$ is dense in $\cL_1=c(\bNz)$
(Proposition~\ref{prop:G1_is_dense_in_c}).
Here, the situation is similar to~\cite{BauerFulsche2020,Hagger2021,Xia2015}
and~\cite{Suarez2008,
GrudskyMaximenkoVasilevski2013,
BauerHerreraVasilevski2014,
HerreraVasilevskiMaximenko2015}, where the generated C*-algebra coincides with the closure of the generating subspace.

For $n\ge 2$, Theorems~\ref{thm:main} and~\ref{thm:closure_is_not_Cstar_algebra} mean that $\cTR_{n,\al}$
can be identified with $\cL_n$,
and the closure of $\cGTR_{n,\al}$
in the uniform operator topology
is a proper subspace of $\cTR_{n,\al}$.
So, the $n$-analytic case with $n\ge 2$ is very different from the analytic case ($n=1$).

It is easy to see that $\cX_{n,\al}\subseteq\cL_n$. The difficult part is to show that $\cG_{n,\al}$ is sufficiently rich to generate $\cL_n$.
Following Loaiza and Lozano~\cite{LoaizaLozano2013} and other papers \cite{RamirezSanchez2015,
LoaizaRamirez2017,
SanchezGonzalezLopezArroyo2018,
RamirezRamirezSanchez2019,
RamirezRamirezMorales2020},
we use the concept of pure states
and apply Kaplansky's noncommutative analog of Stone--Weierstrass theorem to prove that
$\cX_{n,\al}=\cL_n$.

One of the main ideas of the paper is to consider the matrix sequences $\ga_{n,\al}(a)$ taking as generating symbols $a$ some Jacobi polynomials, with an appropriate change of variables.  
Due to orthogonal properties of Jacobi polynomials,
the corresponding matrices $\ga_{n,\al}(a)_\xi$ have a certain ``antitriangular structure'' and provide a convenient set of generators.
We think that similar ideas can be useful for other reproducing kernel Hilbert spaces and other group actions.

Another crucial idea of this paper is a special scheme to generate $\Mat_n$ from a finite matrix set.
This idea was briefly explained by Ram\'{i}rez~Ortega, Ram\'{i}rez~Mora, and S\'{a}nchez-Nungaray in~\cite{RamirezRamirezSanchez2019},
but we develop it in a more detailed form in Section~\ref{sec:one_generating_set_of_matrices}.

There are some differences between this paper and~\cite{RamirezSanchez2015,
LoaizaRamirez2017,
SanchezGonzalezLopezArroyo2018,
RamirezRamirezSanchez2019,
RamirezRamirezMorales2020}.
\begin{itemize}
\item In this paper, the components of the matrices $\ga_{n,\al}(a)_\xi$ and $\ga_{n,\al}(a)_\eta$, with $\xi\ne\eta$,
have a very similar form:
$\xi$ and $\eta$ only affect some powers and coefficients.
Therefore, the corresponding pure states are difficult to separate (see Proposition~\ref{prop:pure_states_can_coincide_on_gammas}).

\item If $\xi<0$, then the order of the matrix $\ga_{n,\al}(a)_\xi$ is different from $n$
and depends on $\xi$.
Thus, the treatment of the matrices $\ga_{n,\al}(a)_\xi$ corresponding to negative frequencies $\xi$
requires an additional work.
\end{itemize}

By analogy to~\cite{RamirezRamirezSanchez2019,RamirezRamirezMorales2020},
it is natural to conjecture that $\cX_{n,\al}$ can be generated by a finite set of matrix sequences $\ga_{n,\al}(a)$ with $a$ in $\BL$,
but we have not proved (or disproved) this conjecture.

The paper has the following structure.
In Sections~\ref{sec:Jacobi}
and~\ref{sec:radial_Toeplitz_operators_as_matrix_sequences},
we recall the main properties of Jacobi polynomials on $(0,1)$ and the correspondence between the radial Toeplitz operators acting in $\cA_n^2(\bD,\mu_\al)$ and matrix sequences.
In Section~\ref{sec:limits_of_matrix_sequences}, we show that $\cX_{n,\al}\subseteq\cL_n$.
Section~\ref{sec:algebra_of_matrix_sequences_with_scalar_limits} contains a description of the pure states of $\cL_n$.
In Section~\ref{sec:one_generating_set_of_matrices}, we explain how to construct the matrix algebra $\Mat_n$ from a special generating set.
In Sections~\ref{sec:algebra_gammas_fixed_frequency}, \ref{eq:separate_limit_value},
\ref{sec:separate_pure_states_associated_to_different_frequencies}, and~\ref{sec:separate_pure_states_associated_to_different_frequencies_when_the_lower_frequency_is_negative}, we prove that $\cX_{n,\al}$ separates the pure states of $\cL_n$.
In Section~\ref{sec:proofs_of_main_results}, we finish the proofs of the main results.

\section{Shifted Jacobi polynomials on the unit interval}
\label{sec:Jacobi}

Let $\al,\be>-1$.
For every $m$ in $\bNz\eqdef\{0,1,2,\ldots\}$,
we denote by $Q_m^{(\al,\be)}$
the ``shifted Jacobi polynomial''
obtained from the classical Jacobi polynomial
$P_m^{(\al,\be)}$
by composing it with the change of variables $t\mapsto 2t-1$:
\[
Q_m^{(\al,\be)}(t)\eqdef P_m^{(\al,\be)}(2t-1).
\]
Properties of the polynomials $Q_m^{(\al,\be)}$ can be easily deduced from well-known properties of $P_m^{(\al,\be)}$, see~\cite[Chapter~4]{Szego1975}
and~\cite[Section~2]{BMR2021}.
Here is the Rodrigues-type formula and the explicit expression for
$Q_m^{(\al,\be)}$:
\begin{align}
\label{eq:shifted_Jacobi_Rodrigues}
Q_m^{(\al,\be)}(t)
&=\frac{(-1)^m}{m!}\,(1-t)^{-\al} t^{-\be}\,
\frac{\dif^m}{\dif{}t^m}
\Bigl((1-t)^{m+\al} t^{m+\be}\Bigr),
\\
\label{eq:shifted_Jacobi_explicit}
Q_m^{(\al,\be)}(t)
&=\sum_{k=0}^m
\binom{\al+\be+m+k}{k}
\binom{\be+m}{m-k}
(-1)^{m-k} t^k.
\end{align}
The sequence $(Q_m^{(\al,\be)})_{m=0}^\infty$
is orthogonal on $(0,1)$ with respect to the
``Jacobi weight'' $t\mapsto (1-t)^\al t^\be$.
More precisely, for every $p,q$ in $\bNz$,
\begin{equation}
\label{eq:Q_inner_prod}
\int_0^1
Q_p^{(\al,\be)}(t)
Q_q^{(\al,\be)}(t)
\,
(1-t)^\al t^\be\,\dif{}t
=\de_{p,q}\,
\frac{\Ga(p+\al+1)\Ga(p+\be+1)}%
{(2p+\al+\be+1)\Ga(p+\al+\be+1)\,p!},
\end{equation}
where $\de_{p,q}$ is Kronecker's delta.
In this paper, we will also use the 
orthogonal properties of the sequence $(Q_m^{(\al,\be)})_{m=0}^\infty$ in the following form
(these properties can be proved applying~\eqref{eq:shifted_Jacobi_Rodrigues} and integration by parts).

\begin{prop}
\label{prop:int_Q_by_polynomial}
Let $\al>-1$ and $\be>-1$.
Then for every $m$ in $\bNz$
and every polynomial $f$ of degree $<m$,
\[
\int_0^1 Q_m^{(\al,\be)}(t) f(t) \,(1-t)^\al t^\be\,\dif{}t
=0,
\]
but for every polynomial $f$ of degree $m$,
\[
\int_0^1 Q_m^{(\al,\be)}(t) f(t) \,(1-t)^\al t^\be\,\dif{}t
\ne 0.
\]
\end{prop}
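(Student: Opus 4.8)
The plan is to substitute the Rodrigues-type formula~\eqref{eq:shifted_Jacobi_Rodrigues} into the integral and then integrate by parts $m$ times. Set $g(t)\eqdef (1-t)^{m+\al} t^{m+\be}$, so that the prefactor $(1-t)^{-\al}t^{-\be}$ in~\eqref{eq:shifted_Jacobi_Rodrigues} exactly cancels the Jacobi weight $(1-t)^\al t^\be$, giving
\[
\int_0^1 Q_m^{(\al,\be)}(t)\,f(t)\,(1-t)^\al t^\be\,\dif{}t
=\frac{(-1)^m}{m!}\int_0^1 g^{(m)}(t)\,f(t)\,\dif{}t.
\]

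Next I would integrate by parts $m$ times, transferring each derivative from $g$ onto $f$. The crucial point is that every boundary term vanishes. Near $t=0$ the derivative $g^{(k)}(t)$ behaves like a constant times $t^{m+\be-k}$, and for $k\le m-1$ the exponent satisfies $m+\be-k\ge \be+1>0$, so $g^{(k)}(0^+)=0$; symmetrically, near $t=1$ it behaves like $(1-t)^{m+\al-k}$ with exponent $\ge\al+1>0$, so $g^{(k)}(1^-)=0$. This is precisely where the hypotheses $\al>-1$ and $\be>-1$ enter. Since the boundary contributions all disappear and each integration by parts contributes a factor $-1$, I arrive at
\[
\int_0^1 Q_m^{(\al,\be)}(t)\,f(t)\,(1-t)^\al t^\be\,\dif{}t
=\frac{1}{m!}\int_0^1 (1-t)^{m+\al}\,t^{m+\be}\,f^{(m)}(t)\,\dif{}t.
\]

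Both claims are then immediate. If $\deg f<m$, then $f^{(m)}\equiv 0$ and the integral is $0$. If $\deg f=m$, then $f^{(m)}$ is a nonzero constant $c=m!\,[\text{leading coefficient of }f]$, and the remaining factor is the Beta integral $\Be(m+\be+1,m+\al+1)$, which is strictly positive because both arguments exceed $0$; hence the whole expression equals $c\,\Be(m+\be+1,m+\al+1)/m!\ne 0$.

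The only delicate step is the vanishing of the boundary terms in the repeated integration by parts, and this is exactly the place where $\al>-1$ and $\be>-1$ are used: these bounds guarantee that each $g^{(k)}$ with $k\le m-1$ tends to $0$ at both endpoints even when $\al,\be$ are non-integer. Everything else is a routine computation.
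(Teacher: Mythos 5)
Your proof is correct and takes exactly the approach the paper itself indicates (the paper only sketches it in a parenthetical remark): substitute the Rodrigues-type formula~\eqref{eq:shifted_Jacobi_Rodrigues} and integrate by parts $m$ times, with the hypotheses $\al>-1$ and $\be>-1$ ensuring all boundary terms vanish. Your final Beta-integral evaluation also agrees with the explicit formula $\Be(\xi+m+1,\al+m+1)$ that the paper records immediately after the proposition for the case $f(t)=t^m$.
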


In particular, for $f(t)=t^m$,
the integral from Proposition~\ref{prop:int_Q_by_polynomial} can be computed explicitly:
\[
\int_0^1 Q_m^{(\al,\xi)}(t)\,
t^{m} (1-t)^\al t^{\xi}\,\dif{}t
=\Be(\xi+m+1,\al+m+1).
\]
Inspired by~\eqref{eq:Q_inner_prod}
we define the ``Jacobi function''
$\jac_m^{(\al,\be)}$ on $(0,1)$ as
\begin{equation}
\label{eq:jac}
\jac_m^{(\al,\be)}(t)
\eqdef \jaccoef{\al}{\be}{m}
(1-t)^{\al/2}t^{\be/2}Q_m^{(\al,\be)}(t),
\end{equation}
where
\begin{equation}
\label{eq:jaccoef}
\jaccoef{\al}{\be}{m}
\eqdef
\sqrt{\frac{(2m+\al+\be+1)\,\Ga(m+\al+\be+1)\,m!}
{\Ga(m+\al+1)\Ga(m+\be+1)}}.
\end{equation}
For $\be$ in $\bNz$,
the normalizing coefficient
$\jaccoef{\al}{\be}{m}$
can be expressed through some binomial coefficients:
\begin{equation}
\label{eq:jaccoef_via_binomial}
\jaccoef{\al}{\be}{m}
=\sqrt{\frac{(2m+\al+\be+1)\,
\binom{\al+\be+1}{\be}}%
{\binom{m+\be}{\be}}}.
\end{equation}
The function sequence $(\jac_m^{(\al,\be)})_{m=0}^\infty$ is orthonormal on $(0,1)$ without weight:
\[
\int_0^1
\jac_p^{(\al,\be)}(t)
\jac_q^{(\al,\be)}(t)
\,\dif{}t
=\de_{p,q}.
\]

\section{Representation of radial Toeplitz operators as matrix sequences}
\label{sec:radial_Toeplitz_operators_as_matrix_sequences}

In this section,
we recall the main results from~\cite{BMR2021} and slightly change some notation.

Let $n\in\bN\eqdef\{1,2,\ldots\}$ and $\al>-1$.
We denote by $\cA_n^2(\bD,\mu_\al)$
the space of $n$-analytic functions,
square integrable with respect to the normalized weighted Lebesgue plane measure
\[
\dif\mu_\al(z)\eqdef
\frac{\al+1}{\pi}\,(1-|z|^2)^\al\,\dif\mu(z).
\]
For every $\tau$ in the unit circle
$\bT\eqdef\{z\in\bC\colon\ |z|=1\}$,
let $\rho_{n,\al}(\tau)$ be the rotation
operator acting in $\cA_n^2(\bD,\mu_\al)$ by the rule
\[
(\rho_{n,\al}(\tau)f)(z)\eqdef f(\tau^{-1}z).
\]
The family $\rho_{n,\al}$
is a unitary representation of the group $\bT$
in the space $\cA_n^2(\bD,\mu_\al)$.
We denote by $\cR_{n,\al}$ its centralizer (commutant),
i.e., the von Neumann algebra that consists
of all bounded linear operators
acting in $\cA_n^2(\bD,\mu_\al)$
that commute with $\rho_{n,\al}$
for every $\tau$ in $\bT$.
In other words, the elements of $\cR_{n,\al}$
are the operators
intertwining the representation $\rho_{n,\al}$.
The elements of $\cR_{n,\al}$ are called
\emph{radial} operators in $\cA_n^2(\bD,\mu_\al)$.

In~\cite{BMR2021}, we explained various equivalent definitions of the normalized disk polynomials
$\basic{\al}{p}{q}$.
In particular, we expressed them via the shifted Jacobi polynomials:
\begin{align}
\label{eq:b_via_Q}
\basic{\al}{p}{q}(r\tau)
&=\frac{\jaccoef{\al}{|p-q|}{\min\{p,q\}}}{\sqrt{\al+1}}
r^{|p-q|}\tau^{p-q}
Q_{\min\{p,q\}}^{(\al,|p-q|)}(r^2)
\qquad(0\le r<1,\ \tau\in\bT),
\\
\label{eq:b_via_jac}
\basic{\al}{p}{q}(r\tau)
&=\frac{\tau^{p-q}(1-r^2)^{-\al/2}}{\sqrt{\al+1}}
\jac_{\min\{p,q\}}^{(\al,|p-q|)}(r^2)
\qquad(0\le r<1,\ \tau\in\bT).
\end{align}
For every $\xi$ in $\bZ$ and every $s$ in $\bN$, we denote by $\FreqSubspace^{(\al)}_{\xi,s}$ the subspace of $L^2(\bD,\mu_\al)$
generated by the monomial functions
$z\mapsto z^p \conjz^q$
with $p-q=\xi$ and
$\min\{p,q\}\leq s$.
Equivalently,
\begin{equation}\label{eq:truncated_FreqSbpce_gen_by_b}
\FreqSubspace^{(\al)}_{\xi,s}
=\lin
\bigl\{\basic{\al}{p}{q}\colon\ p-q=\xi,\ 
\min\{p,q\}\leq s\bigr\}.
\end{equation}
The vector space $\FreqSubspace^{(\al)}_{\xi,s}$ does not depend on $\al$, but we endow it with the inner product from $L^2(\bD,\mu_\al)$.
Obviously, $\dim(\FreqSubspace^{(\al)}_{\xi,s})=s$.
As it was exposed in \cite{BMR2021},
\begin{itemize}
\item the family $(\basic{\al}{p}{q})_{p,q\in\bNz}$ is
an orthonormal basis of $L^{2}(\bD, \mu_{\al})$,
\item the family $(b_{p,q})_{p\in\bNz,0\le q<n}$
is an orthonormal basis of $\cA_n^2(\bD,\mu_\al)$.
\end{itemize}
When $p,q\in\bNz$ and $q<n$, the difference $p-q$ belongs to the set
\begin{equation}
\label{eq:Om_def}
\Om_n\eqdef\{\xi\in\bZ\colon\ \xi\ge-n+1\}
=\{-n+1,\ldots,0,1,\ldots\}.
\end{equation}
Reasoning in terms of orthonormal bases we decompose $\cA_n^{2}(\bD,\mu_\al)$ into the following direct sum of the ``truncated frequency subspaces'':
\[
\cA_n^{2}(\bD,\mu_\al)
=\bigoplus_{\xi\in\Om_n} \FreqSubspace^{(\al)}_{\xi,\min\{n+\xi,n\}}.
\]

\begin{remark}
In what follows,
we introduce a notation slightly different from~\cite{BMR2021}.
In~\cite{BMR2021}, for $\xi<0$, 
we used indices from $-\xi$ to $n-1$ to numerate the elements of the basis of
$\FreqSubspace^{(\al)}_{\xi,n+\xi}$
and the components of the matrices $\Phi_n(S)_\xi$ and
$\ga_{n,\al}(a)_\xi$ which appear below.
In this paper, for $\xi<0$ we will use indices from $0$ to $n+\xi-1$.
\end{remark}

For the orthonormal basis of $\FreqSubspace^{(\al)}_{\xi,\min\{n,n+\xi\}}$
given in~\eqref{eq:truncated_FreqSbpce_gen_by_b},
we employ the following enumeration:
\begin{equation}\label{eq:truncated_FreqSbpce_basis}
\basic{\al}{\max\{j+\xi,j\}}{\max\{j-\xi,j\}},
\qquad
0\le j\le \min\{n+\xi,n\}-1.
\end{equation}
Then, we obtain an isometric isomorphism
\[
U_{n,\al}\colon
\cA_n^2(\bD,\mu_\al)
\to 
\bigoplus_{\xi\in\Om_n}
\bC^{\min\{n+\xi,n\}},
\]
acting by the following rule:
\[
(U_{n,\al} f)_{\xi,j}
\eqdef \langle f, \basic{\al}{\max\{j+\xi,j\}}{\max\{j-\xi,j\}}\rangle.
\]
Recall that $\cS_n$ is defined in~\eqref{eq:cS_def_intro}.
Equivalently,
\[
\cS_n \eqdef\bigoplus_{\xi\in\Om_n}
\Mat_{\min\{n,n+\xi\}}
= \left(\bigoplus_{\xi=-n+1}^{-1}
\Mat_{n+\xi}\right)
\oplus
\left(\bigoplus_{\xi=0}^{\infty}
\Mat_n\right).
\]
For example,
\[
\cS_3
=\underbrace{\Mat_1}_{\xi=-2}
\oplus\underbrace{\Mat_2}_{\xi=-1}
\oplus\underbrace{\Mat_3}_{\xi=0}
\oplus\underbrace{\Mat_3}_{\xi=1}
\oplus\underbrace{\Mat_3}_{\xi=2}
\oplus\dots.
\]
According to the definition of the direct sum,
$\cS_n$ consists of all matrix sequences
of the form
$A=(A_\xi)_{\xi\in\Om_n}$,
where $A_\xi\in\Mat_{n+\xi}$ if $\xi<0$, $A_\xi\in\Mat_n$ if $\xi\ge0$, and
\[
\sup_{\xi\in\Om_n}\|A_\xi\|<+\infty.
\]
Being a direct sum of W*-algebras,
$\cS_n$ is a W*-algebra.
Moreover, $\cS_n$ can be seen as a von Neumann algebra of operators acting in the Hilbert space
$\bigoplus_{\xi\in\Om_n}
\bC^{\min\{n+\xi,n\}}$.

Define $\Phi_{n,\al}\colon\cR_{n,\al}\to\cS_n$ by the following rules.
For $\xi$ in $\bNz$,
\begin{equation}
\label{eq:Phi_positive}
\Phi_{n,\al}(S)_\xi
=\left[\left\langle S\basic{\al}{\xi+k}{k},\basic{\al}{\xi+j}{j}\right\rangle\right]_{j,k=0}^{n-1}.
\end{equation}
For $\xi$ in $\{-n+1,\ldots,-1\}$,
\begin{equation}
\label{eq:Phi_negative}
\Phi_{n,\al}(S)_\xi
=\left[\left\langle S\basic{\al}{k}{k-\xi},\basic{\al}{j}{j-\xi}\right\rangle\right]_{j,k=0}^{n+\xi-1}.
\end{equation}
To join these two cases 
(\eqref{eq:Phi_positive} and~\eqref{eq:Phi_negative}),
we enumerate the elements of the basis as in~\eqref{eq:truncated_FreqSbpce_basis}.
Then, the whole sequence $\Phi_n(S)$ can be written in the following form:
\begin{equation}
\label{eq:Phi}
\Phi_{n,\al}(S)
\eqdef
\left(\left[\left\langle S\basic{\al}{\max\{k+\xi,k\}}{\max\{k-\xi,k\}},\basic{\al}{\max\{j+\xi,j\}}{\max\{j-\xi,j\}}\right\rangle\right]_{j,k=0}^{\min\{n+\xi,n\}-1}
\right)_{\xi\in\Om_n}.
\end{equation}
Then $\Phi_{n,\al}$ is an isomorphism of W*-algebras.
Moreover, $\Phi_{n,\al}$ is induced by $U_{n,\al}$ in the following sense:
if $S\in\cR_{n,\al}$ and $f\in\cA_n^2(\bD,\mu_\al)$, then
\[
U_{n,\al}(S f)
=\Phi_{n,\al}(S) U_{n,\al}(f).
\]
Hence, the von Neumann algebras $\cR_{n,\al}$ and $\cS_n$ are spatially isomorphic:
\[
\cR_{n,\al}
\cong\bigoplus_{\xi=-n+1}^\infty \cB(\FreqSubspace^{(\al)}_{\xi,\min\{n+\xi,n\}})
\cong
\cS_n.
\]

\begin{example}
\label{example:Phi_matrices_for_n_equal_3}
For $n=3$ and $\xi\ge0$,
the list $(\basic{\al}{\xi}{0}, \basic{\al}{\xi+1}{1}, \basic{\al}{\xi+2}{2})$ is an ordered basis of
$\FreqSubspace^{(\al)}_{\xi,3}$.
According to~\eqref{eq:Phi_positive} or~\eqref{eq:Phi},
\[
\Phi_3(S)_\xi
=
\begin{bmatrix}
\langle S\basic{\al}{\xi}{0},
\basic{\al}{\xi}{0} \rangle
&
\langle S\basic{\al}{\xi+1}{1},
\basic{\al}{\xi}{0} \rangle
&
\langle S\basic{\al}{\xi+2}{2},
\basic{\al}{\xi}{0} \rangle
\\[1ex]
\langle S\basic{\al}{\xi}{0},
\basic{\al}{\xi+1}{1} \rangle
&
\langle S\basic{\al}{\xi+1}{1},
\basic{\al}{\xi+1}{1} \rangle
&
\langle S\basic{\al}{\xi+2}{2},
\basic{\al}{\xi+1}{1} \rangle
\\[1ex]
\langle S\basic{\al}{\xi}{0},
\basic{\al}{\xi+2}{2} \rangle
&
\langle S\basic{\al}{\xi+1}{1},
\basic{\al}{\xi+2}{2} \rangle
&
\langle S\basic{\al}{\xi+2}{2},
\basic{\al}{\xi+2}{2} \rangle
\end{bmatrix}.
\]
In particular, for $n=3$ and $\xi=1$,
the list $(\basic{\al}{1}{0}, \basic{\al}{2}{1}, \basic{\al}{3}{2})$ is an ordered basis of
$\FreqSubspace^{(\al)}_{1,3}$,
and
\[
\Phi_3(S)_1
=
\begin{bmatrix}
\langle S\basic{\al}{1}{0},
\basic{\al}{1}{0} \rangle
&
\langle S\basic{\al}{2}{1},
\basic{\al}{1}{0} \rangle
&
\langle S\basic{\al}{3}{2},
\basic{\al}{1}{0} \rangle
\\[1ex]
\langle S\basic{\al}{1}{0},
\basic{\al}{2}{1} \rangle
&
\langle S\basic{\al}{2}{1},
\basic{\al}{2}{1} \rangle
&
\langle S\basic{\al}{3}{2},
\basic{\al}{2}{1} \rangle
\\[1ex]
\langle S\basic{\al}{1}{0},
\basic{\al}{3}{2} \rangle
&
\langle S\basic{\al}{2}{1},
\basic{\al}{3}{2} \rangle
&
\langle S\basic{\al}{3}{2},
\basic{\al}{3}{2} \rangle
\end{bmatrix}.
\]
For $n=3$ and $\xi=-1$,
the list
$(\basic{\al}{0}{1}, \basic{\al}{1}{2})$
is an ordered basis of $\FreqSubspace^{(\al)}_{-1,2}$.
By~\eqref{eq:Phi_negative},
\[
\Phi_3(S)_{-1}
=
\begin{bmatrix}
\langle S\basic{\al}{0}{1},
\basic{\al}{0}{1} \rangle
&
\langle S\basic{\al}{1}{2},
\basic{\al}{0}{1} \rangle
\\[1ex]
\langle S\basic{\al}{0}{1},
\basic{\al}{1}{2} \rangle
&
\langle S\basic{\al}{1}{2},
\basic{\al}{1}{2} \rangle
\end{bmatrix}.
\]
\end{example}

\subsection*{Radial Toeplitz operators
in $\cA_n^2(\bD,\mu_\al)$}

Given $b$ in $L^\infty(\bD)$, let $T_{n,\al,b}\colon \cA_n^2(\bD,\mu_\al)\to \cA_n^2(\bD,\mu_\al)$ be the Toeplitz operator with generating symbol $b$.
It is easy to see that $T_{n,\al,b}$ is radial if and only if $b$ is a radial function (more precisely, $b$ is a class of equivalence containing a radial function).

Given $a$ in $L^\infty([0,1))$,
$\xi$ in $\Om_n$, and $j,k$ in $\bNz$,
we denote by $\be_{a,\al,\xi,j,k}$
the following number:
\begin{equation}\label{eq:gamma_as_integral_jac}
\be_{a,\al,\xi,j,k}
\eqdef
\int_0^1 a(\sqrt{t})\,
\jac_{j}^{(\al,|\xi|)}(t)\,
\jac_{k}^{(\al,|\xi|)}(t)
\,\dif{}t.
\end{equation}
Equivalently,
\begin{equation}
\label{eq:gamma_as_integral}
\be_{a,\al,\xi,j,k}
=
\jaccoef{\al}{|\xi|}{j}
\jaccoef{\al}{|\xi|}{k}
\int_0^1 a(\sqrt{t})\,
Q_{j}^{(\al,|\xi|)}(t)\,
Q_{k}^{(\al,|\xi|)}(t)
\,(1-t)^\al\,t^{|\xi|}\,\dif{}t.
\end{equation}
Given $a$ in $L^\infty([0,1))$,
we denote by $\ga_{n,\al}(a)$
the matrix sequence
$[\ga_{n,\al}(a)_\xi]_{\xi\in\Om_n}$,
where
$\ga_{n,\al}(a)_\xi\in\Mat_{\min\{n+\xi,n\}}$
and
\begin{equation}
\label{eq:gamma_def}
\ga_{n,\al}(a)_\xi
\eqdef
\bigl[\be_{a,\al,\xi,j,k}\bigr]_{j,k=0}^{\min\{n+\xi,n\}-1}.
\end{equation}
Let $a\in L^\infty([0,1))$
and $\widetilde{a}\in L^\infty(\bD)$ be defined by
$\widetilde{a}(z)\eqdef a(|z|)$.

By~\eqref{eq:b_via_Q} or~\eqref{eq:b_via_jac},
\begin{equation}
\label{eq:radial_Toeplitz_applied_to_the_basis}
\left\langle
T_{n,\al,\widetilde{a}}\,
\basic{\al}{\max\{k+\xi,k\}}{\max\{k-\xi,k\}},\basic{\al}{\max\{j+\eta,j\}}{\max\{j-\eta,j\}}\right\rangle
=
\de_{\xi,\eta}
\be_{a,\al,\xi,j,k}.
\end{equation}
Formula~\eqref{eq:radial_Toeplitz_applied_to_the_basis} implies that
$T_{n,\al,\widetilde{a}}\in\cR_{n,\al}$ and
\[
U_{n,\al}(T_{n,\al,\widetilde{a}} f)
=\ga_{n,\al}(a)
U_{n,\al}(f)
\qquad(f\in\cA_n^2(\bD,\mu_\al)),
\]
i.e.,
\[
\Phi_{n,\al}(T_{n,\al,\widetilde{a}})
=\ga_{n,\al}(a).
\]
Thereby, the study of radial Toeplitz operators $T_{n,\al,\widetilde{a}}$ acting in $\cA_n^2(\bD,\mu_\al)$ is reduced to the study of the matrix sequences $\ga_{n,\al}(a)$.

\begin{remark}
\label{rem:gammas_are_symmetric}
For all $\al,a,\xi$,
the matrices
$\ga_{n,\al}(a)$ are symmetric
because
$\be_{a,\al,\xi,j,k}
=\be_{a,\al,\xi,k,j}$:
\[
\ga_{n,\al}(a)^\top
=\ga_{n,\al}(a).
\]
\end{remark}

\begin{remark}
\label{rem:gammas_for_negative_frequencies}
For $\xi$ in $\{-n+1,\ldots,-1\}$,
according to~\eqref{eq:gamma_as_integral}
and~\eqref{eq:gamma_def},
\[
\ga_{n,\al}(a)_\xi\in\Mat_{n+\xi},\qquad
\ga_{n,\al}(a)_{|\xi|}\in\Mat_n,
\]
and $\ga_{n,\al}(a)_\xi$
is the upper-left submatrix of
$\ga_{n,\al}(a)_{|\xi|}$.
For example ($n=4$),
\[
\ga_{4,\al}(a)_2
=
\begin{bmatrix}
\be_{a,\al,2,0,0}
&
\be_{a,\al,2,0,1}
&
\be_{a,\al,2,0,2}
&
\be_{a,\al,2,0,3}
\\
\be_{a,\al,2,0,1}
&
\be_{a,\al,2,1,1}
&
\be_{a,\al,2,1,2}
&
\be_{a,\al,2,1,3}
\\
\be_{a,\al,2,0,2}
&
\be_{a,\al,2,1,2}
&
\be_{a,\al,2,2,2}
&
\be_{a,\al,2,2,3}
\\
\be_{a,\al,2,0,3}
&
\be_{a,\al,2,1,3}
&
\be_{a,\al,2,2,3}
&
\be_{a,\al,2,3,3}
\end{bmatrix},
\quad
\ga_{4,\al}(a)_{-2}
=
\begin{bmatrix}
\be_{a,\al,2,0,0}
&
\be_{a,\al,2,0,1}
\\
\be_{a,\al,2,0,1}
&
\be_{a,\al,2,1,1}
\end{bmatrix}.
\]
\end{remark}

Recall that $\BL$ is defined by~\eqref{eq:BL_def}.
In this paper, we consider the Toeplitz operators $T_{n,\al,\widetilde{a}}$,
where $a\in\BL$.
Let $\cGTR_{n,\al}$ be the set of all such operators:
\[
\cGTR_{n,\al}
\eqdef
\bigl\{T_{n,\al,\widetilde{a}}\colon\
a\in\BL\bigr\}.
\]
Let $\cTR_{n,\al}$ be the C*-algebra generated by $\cGTR_{n,\al}$.
Furthermore, we denote by $\cG_{n,\al}$ the set of the matrix sequences $\ga_{n,\al}(a)$, see~\eqref{eq:cG_def},
and by $\cX_{n,\al}$ the C*-algebra generated by $\cG_{n,\al}$.
The C*-algebra $\cTR_{n,\al}$
is spatially isomorphic to $\cX_{n,\al}$.
In the rest of the paper, we deal with $\cG_{n,\al}$ and $\cX_{n,\al}$ instead of working directly with the operators $T_{n,\al,\widetilde{a}}$.

\section{Limits of matrix sequences associated to radial Toeplitz operators}
\label{sec:limits_of_matrix_sequences}

The main result of this section is Proposition~\ref{prop:a_has_limit_implies_gamma_has_limit}.
We need a couple of elementary inequalities
for the binomial coefficients and the Gamma function:
\begin{equation}\label{eq:binom_upper_bound}
\binom{z+k}{k}
=\frac{(z+k)(z+k-1)\cdots (z+1)}{k!}
\le \frac{(z+k)^k}{k!}\qquad(z>0,\ k\in\bN_0),
\end{equation}
\begin{equation}\label{eq:Wendel_rough}
\frac{\Ga(z+a)}{\Ga(z)}\le (z+a)^a\qquad(z>0,\ a>0).
\end{equation}
Wendel~\cite{Wendel1948} proved that
$\Ga(z+a)\le z^a \Ga(z)$ for $z>0$ and $0<a<1$.
The rough upper bound~\eqref{eq:Wendel_rough}
follows from Wendel's inequality
and the recurrence relation $\Ga(z+1)=z\Ga(z)$.
See various proofs and reviews of inequalities for Gamma function ratios in \cite{Qi2010,Jameson2013}.

\begin{lem}\label{lem:jac_tends_to_zero}
Let $m\in\bNz$ and $0<x<1$. Then
\[
\lim_{\be\to+\infty}
\sup_{0\le t\le x}|\jac_m^{(\al,\be)}(t)|=0.
\]
\end{lem}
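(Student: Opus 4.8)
The plan is to exploit the fact that, for $t$ bounded away from $1$, the factor $t^{\be/2}$ in the definition~\eqref{eq:jac} of $\jac_m^{(\al,\be)}$ forces exponential decay in $\be$, whereas every other ingredient grows only polynomially in $\be$. First I would substitute the explicit expansion~\eqref{eq:shifted_Jacobi_explicit} of $Q_m^{(\al,\be)}$ into~\eqref{eq:jac}, obtaining
\[
\jac_m^{(\al,\be)}(t)
= \jaccoef{\al}{\be}{m}(1-t)^{\al/2}
\sum_{k=0}^m \binom{\al+\be+m+k}{k}\binom{\be+m}{m-k}(-1)^{m-k} t^{\be/2+k}.
\]
On the interval $[0,x]$ each monomial satisfies $t^{\be/2+k}\le x^{\be/2}$ (since $0\le x<1$ and $k\ge0$), and the factor $(1-t)^{\al/2}$ is bounded by the constant $C_{\al,x}\eqdef\max\{1,(1-x)^{\al/2}\}$ (the sign of $\al$ decides whether the maximum is attained at $t=0$ or $t=x$). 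For $\be$ large the generalized binomial coefficients are positive, so passing to absolute values merely drops the signs $(-1)^{m-k}$.

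Second, I would bound the two families of binomial coefficients by~\eqref{eq:binom_upper_bound}: $\binom{\al+\be+m+k}{k}\le(\al+\be+m+k)^k/k!$ and $\binom{\be+m}{m-k}\le(\be+m)^{m-k}/(m-k)!$, so that the whole sum is $O(\be^m)$ as $\be\to+\infty$. For the normalizing constant I would use~\eqref{eq:jaccoef}: the factor $m!/\Ga(m+\al+1)$ is independent of $\be$, the factor $2m+\al+\be+1$ is $O(\be)$, and~\eqref{eq:Wendel_rough} gives $\Ga(m+\al+\be+1)/\Ga(m+\be+1)=O(\be^{\al})$; hence $\jaccoef{\al}{\be}{m}=O(\be^{(\al+1)/2})$. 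Combining these estimates yields
\[
\sup_{0\le t\le x}\bigl|\jac_m^{(\al,\be)}(t)\bigr|
\le C_{\al,x}\,\jaccoef{\al}{\be}{m}
\Bigl(\sum_{k=0}^m\ldots\Bigr)\,x^{\be/2}
= O\bigl(\be^{m+(\al+1)/2}\bigr)\,x^{\be/2},
\]
and since $0\le x<1$ the right-hand side tends to $0$, proving the claim.

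The only delicate point is the Gamma-ratio estimate when $-1<\al\le0$, because~\eqref{eq:Wendel_rough} is stated for a positive exponent. I expect this to be the main (though minor) obstacle: for $\al\le0$ one can instead observe that $\Ga$ is increasing on $[2,+\infty)$, so $\Ga(m+\al+\be+1)\le\Ga(m+\be+1)$ once $\be$ is large, whence the ratio is at most $1$ and the polynomial bound on $\jaccoef{\al}{\be}{m}$ still holds; alternatively one rewrites the ratio via the recurrence $\Ga(z+1)=z\Ga(z)$ to reduce the exponent into $(0,1)$ and then applies~\eqref{eq:Wendel_rough}. Either way the growth of the prefactor remains polynomial in $\be$, while the exponential factor $x^{\be/2}$ dominates.
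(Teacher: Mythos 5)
Your proposal is correct and follows essentially the same route as the paper's proof: expand $Q_m^{(\al,\be)}$ explicitly, control the binomial coefficients via~\eqref{eq:binom_upper_bound}, control $\jaccoef{\al}{\be}{m}$ via Wendel's inequality~\eqref{eq:Wendel_rough}, and let the exponentially decaying factor $x^{\be/2}$ dominate the resulting polynomial growth in $\be$. The only difference is organizational (you bound the sum monomial by monomial instead of bounding $|Q_m^{(\al,\be)}(t)|$ and $t^{\be/2}$ separately), plus a welcome bonus: you actually spell out the case $-1<\al\le 0$, including the factor $(1-t)^{\al/2}\le\max\{1,(1-x)^{\al/2}\}$ and the monotonicity of $\Ga$, which the paper dismisses as ``simpler'' without details.
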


\begin{proof}
We recall that $\jac_m^{(\al,\be)}$
is defined by~\eqref{eq:jac}.
Consider the case $\al>0$ only,
since the case $-1<\al\le0$ is simpler.
First, we use~\eqref{eq:Wendel_rough} and
estimate from above the coefficient
$\jaccoef{\al}{\be}{n}$ defined by~\eqref{eq:jaccoef}:
\begin{align*}
\left(\jaccoef{\al}{\be}{m}\right)^2
&=(2m+\al+\be+1)
\cdot\frac{\Ga(m+\al+\be+1)}{\Ga(m+\be+1)}
\cdot\frac{m!}{\Ga(m+\al+1)}
\\
&\le (2m+\al+\be+1) (m+\al+\be+1)^\al
\le (2m+\al+\be+1)^{\al+1}.
\end{align*}
Next, we estimate the values of
$Q_m^{(\al,\be)}(t)$ with the help of~\eqref{eq:shifted_Jacobi_explicit} and~\eqref{eq:binom_upper_bound}:
\begin{align*}
|Q_m^{(\al,\be)}(t)|
&\le 
\sum_{k=0}^m
\binom{\al+\be+m+k}{k}\binom{\be+m}{m-k} t^k
\le 
\sum_{k=0}^m
\frac{(\al+\be+m+k)^k}{k!}
\frac{(\be+m)^{m-k}}{(m-k)!}
\\
&\le (m+1) (2m+\al+\be)^m
< (2m+\al+\be+1)^{m+1}.
\end{align*}
Therefore, for $0\le t\le x$,
\begin{align*}
|\jac_m^{(\al,\be)}(t)|
&=\jaccoef{\al}{\be}{m}
(1-t)^{\al/2}t^{\be/2} |Q_m^{(\al,\be)}(t)|
\le (2m+\al+\be+1)^{m+1+\frac{\al+1}{2}}\,x^{\be/2}.
\end{align*}
As $\be$ tends to $+\infty$,
the last expression tends to $0$.
\end{proof}

The following reasoning is similar to
\cite[Lemma~7.2.3]{Vasilevski2008book},
but involves a more complicated expression
with $\jac_j^{(\al,\xi)}$
and uses~Lemma~\ref{lem:jac_tends_to_zero}.

\begin{prop}
\label{prop:a_has_limit_implies_gamma_has_limit}
Let $a\in L^\infty([0,1))$,
$\om\in\bC$,
and $\lim\limits_{r\to 1}a(r)=\om$.
Then
\[
\lim_{\xi\to\infty}\ga_{n,\al}(a)_\xi
= \om I_n.
\]
\end{prop}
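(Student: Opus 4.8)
The plan is to prove the entrywise convergence $\be_{a,\al,\xi,j,k}\to\om\,\de_{j,k}$ as $\xi\to+\infty$, for each fixed pair $j,k$. For $\xi\ge0$ the matrix $\ga_{n,\al}(a)_\xi$ has fixed order $n$, so entrywise convergence of its $n^2$ entries is equivalent to convergence in the matrix norm; hence this reduction suffices. I would start from the representation~\eqref{eq:gamma_as_integral_jac}, noting that $|\xi|=\xi$ for $\xi>0$, and use the orthonormality of the Jacobi functions to isolate the limit value: since $\int_0^1 \jac_j^{(\al,\xi)}(t)\,\jac_k^{(\al,\xi)}(t)\,\dif t=\de_{j,k}$, we may write
\[
\be_{a,\al,\xi,j,k}-\om\,\de_{j,k}
=\int_0^1 \bigl(a(\sqrt t)-\om\bigr)\,\jac_j^{(\al,\xi)}(t)\,\jac_k^{(\al,\xi)}(t)\,\dif t.
\]

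Set $g(t)\eqdef a(\sqrt t)-\om$. Then $g$ is essentially bounded (because $a\in L^\infty([0,1))$) and $g(t)\to0$ as $t\to1$, by hypothesis. Fix $\eps>0$. The second property gives some $x\in(0,1)$ with $|g(t)|\le\eps$ for almost every $t\in(x,1)$, and I would split the integral as $\int_0^x+\int_x^1$.

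For the tail $\int_x^1$, the Cauchy--Schwarz inequality together with the normalization $\int_0^1|\jac_m^{(\al,\xi)}|^2\,\dif t=1$ bounds the contribution by $\eps$, uniformly in $\xi$. For the inner part $\int_0^x$, the factor $|g|$ is controlled by $\|g\|_\infty$, while Lemma~\ref{lem:jac_tends_to_zero} makes $\sup_{0\le t\le x}|\jac_m^{(\al,\xi)}(t)|$ tend to $0$ as $\xi\to+\infty$ for each of the finitely many indices $m\in\{0,\dots,n-1\}$; hence $\int_0^x$ is smaller than $\eps$ once $\xi$ is large enough. Combining the two estimates yields $|\be_{a,\al,\xi,j,k}-\om\,\de_{j,k}|\le 2\eps$ for all sufficiently large $\xi$, which gives the claimed convergence.

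The computation is a standard $\eps/2$-type splitting, so the only genuine analytic input is the uniform decay away from the boundary, which is precisely Lemma~\ref{lem:jac_tends_to_zero}; accordingly the main difficulty lies not in this proof but in that lemma. The one point requiring minor care is the meaning of the boundary condition: the relation $\lim_{t\to1}g(t)=0$ must be used to control $|g|$ on a left neighbourhood of $1$ up to a null set, so that the tail estimate remains valid although $g$ is defined only almost everywhere.
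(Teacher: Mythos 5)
Your proof is correct and follows essentially the same route as the paper: both arguments hinge on the orthonormality of the functions $\jac_m^{(\al,\xi)}$ to absorb the constant $\om$ (you subtract it at the outset, the paper reduces to $\om=0$ at the end), and both then split the integral at a point $x$ near $1$, bounding the tail via the boundary behaviour of $a$ together with Cauchy--Schwarz, and the inner part via Lemma~\ref{lem:jac_tends_to_zero}. The only differences are organizational, so there is nothing to add.
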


\begin{proof}
1. We start with the particular case $\om=0$.
For every $x$ in $(0,1)$ and $\xi$ in $\bNz$, we apply~\eqref{eq:gamma_as_integral_jac} and divide the integral into two parts:
\begin{align*}
|\beta_{a,\al,\xi,j,k}|
&\le \int_0^x |a(\sqrt{t})|\,
\bigl|\jac_j^{(\al,\xi)}(t)\bigr|\,
\bigl|\jac_k^{(\al,\xi)}(t)\bigr|\,
\dif{}t
+\int_x^1 |a(\sqrt{t})|\,
\bigl|\jac_j^{(\al,\xi)}(t)\bigr|\,
\bigl|\jac_k^{(\al,\xi)}(t)\bigr|\,
\dif{}t
\\
&\le x \|a\|_\infty
\,
\left(\sup_{0\le t\le x}
\bigl|\jac_j^{(\al,\xi)}(t)\bigr|\right)
\left(\sup_{0\le t\le x}
\bigl|\jac_k^{(\al,\xi)}(t)\bigr|\right)
+ \sup_{x\le t<1}|a(\sqrt{t})|.
\end{align*}
Let $\eps>0$.
Using the assumption that $a(r)\to0$
as $r\to 1$, we choose $x$ such that the second summand is less than $\eps/2$.
Then, applying Lemma~\ref{lem:jac_tends_to_zero}
with this fixed $x$,
we make the first summand less than $\eps/2$ for $\xi$ large enough.

2. For general $\om$ in $\bC$,
we rewrite $a$ in the form
$b+\om 1_{[0,1)}$,
where $b=a-\om 1_{[0,1)}$ and
\[
\lim_{t\to 1}b(t)=0.
\]
By linearity of integration
and the orthonormal property of the functions~$\jac_j^{(\al,\xi)}$,
\begin{align*}
\be_{a,\al,\xi,j,k}
&=\be_{b,\al,\xi,j,k}
+\om \be_{1_{[0,1)},\al,\xi,j,k}
\\
&=\be_{b,\al,\xi,j,k}
+\om \int_0^1 \jac_j^{(\al,\xi)}(t)
\jac_k^{(\al,\xi)}(t)\,\dif{}t
=\be_{b,\al,\xi,j,k}+\om \de_{j,k}.
\end{align*}
Applying the first part of this proof to $b$ we get 
$\be_{a,\al,\xi,j,k}\to\om\,\de_{j,k}$
as $\xi\to\infty$.
\end{proof}

\section{Algebra of matrix sequences
with scalar limits}
\label{sec:algebra_of_matrix_sequences_with_scalar_limits}

We denote by $\cL_n$ the set of all matrix sequences belonging to $\cS_n$ and having scalar limits, see~\eqref{eq:cL_def}.
It is easily seen that $\cL_n$ is a C*-subalgebra of $\cS_n$.

\begin{prop}
\label{prop:cX_subseteq_cL}
$\cX_{n,\al}\subseteq\cL_n$.
\end{prop}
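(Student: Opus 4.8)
The plan is to show $\cX_{n,\al}\subseteq\cL_n$ by exploiting that $\cL_n$ is already known (as stated in the text) to be a C*-subalgebra of $\cS_n$. Since $\cX_{n,\al}$ is \emph{defined} as the C*-algebra generated by $\cG_{n,\al}$, it suffices to prove the single containment $\cG_{n,\al}\subseteq\cL_n$: any C*-subalgebra containing the generating set $\cG_{n,\al}$ must contain the C*-algebra it generates, so if $\cL_n$ contains $\cG_{n,\al}$ then $\cX_{n,\al}\subseteq\cL_n$. Thus the whole proof reduces to a membership check for the generators.

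First I would take an arbitrary element of $\cG_{n,\al}$, which by~\eqref{eq:cG_def} has the form $\ga_{n,\al}(a)$ for some $a\in\BL$. By the definition of $\BL$ in~\eqref{eq:BL_def}, there exists $\om\in\bC$ with $\lim_{r\to 1}a(r)=\om$. This is exactly the hypothesis of Proposition~\ref{prop:a_has_limit_implies_gamma_has_limit}, which I would invoke directly to conclude
\[
\lim_{\xi\to\infty}\ga_{n,\al}(a)_\xi=\om I_n.
\]
Comparing with the definition of $\cL_n$ in~\eqref{eq:cL_def}, this says precisely that $\ga_{n,\al}(a)\in\cL_n$. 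Since $a\in\BL$ was arbitrary, $\cG_{n,\al}\subseteq\cL_n$.

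Finally I would close the argument: $\cX_{n,\al}$ is the smallest C*-algebra containing $\cG_{n,\al}$, and we have just exhibited a C*-algebra, namely $\cL_n$, that contains $\cG_{n,\al}$; hence $\cX_{n,\al}\subseteq\cL_n$. For this last step to be valid I should make sure $\cL_n$ really is closed under the C*-operations (sum, product, adjoint, scalar multiplication) and is norm-closed inside $\cS_n$ — but this is exactly the statement immediately preceding the proposition that $\cL_n$ is a C*-subalgebra of $\cS_n$, so I may assume it.

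There is essentially no obstacle here: the substantive analytic work has already been discharged in Proposition~\ref{prop:a_has_limit_implies_gamma_has_limit}, and the present proposition is the easy ``$\subseteq$'' half of the main theorem, reduced to a one-line application of that proposition together with the minimality of the generated C*-algebra. The only point requiring any care is the logical step that containment of generators implies containment of the generated algebra, which relies on $\cL_n$ being a genuine C*-subalgebra; I would state this explicitly rather than leave it implicit.
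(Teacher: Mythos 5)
Your proposal is correct and is essentially identical to the paper's proof: both verify $\cG_{n,\al}\subseteq\cL_n$ via Proposition~\ref{prop:a_has_limit_implies_gamma_has_limit} and then conclude $\cX_{n,\al}\subseteq\cL_n$ because $\cL_n$ is a C*-subalgebra containing the generating set. You simply spell out the details (the definition of $\BL$, the minimality of the generated C*-algebra) that the paper's two-line proof leaves implicit.
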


\begin{proof}
Proposition~\ref{prop:a_has_limit_implies_gamma_has_limit} implies that $\cG_{n,\al}\subseteq\cL_n$.
Since $\cL_n$ is a C*-algebra,
$\cX_{n,\al}\subseteq\cL_n$.
\end{proof}

Our goal is to prove that $\cX_{n,\al}$ coincides with $\cL_n$.
We will use the concept of pure states (see, e.g., Dixmier~\cite[Section~2.5]{Dixmier1977} or Murphy~\cite[Section~5.1]{Murphy1990})
and a non-commutative analog of Stone--Weierstrass theorem proved by Kaplansky.

In this paper, we deal with unital C*-algebras only.
Given a unital C*-algebra $\cU$, we denote by $\PureStates(\cU)$ the set of all pure states of $\cU$.
It is well-known 
(see, e.g.,
~\cite[Theorem~2.9.5]{Dixmier1977} or \cite[Theorem~5.3.5]{Murphy1990}) that the pure states of $\cU$ bijectively correspond to the maximal left ideals of $\cU$
(and to the maximal right ideals of $\cU$).

The next theorem is a particular case of Kaplansky~\cite[Theorem~7.2]{Kaplansky1951}
reformulated as in~\cite[Theorem~11.1.8]{Dixmier1977}.
For the concepts of liminal C*-algebras and C*-algebras of type I, see~\cite[Chapters~4, 9]{Dixmier1977}.

\begin{thm}
\label{thm:Kaplansky}
Let $\cU$ be a unital C*-algebra of type I
and $\cY$ be a C*-subalgebra of $\cU$ separating the pure states of $\cU$.
Then $\cY=\cU$.
\end{thm}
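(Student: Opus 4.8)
The plan is to run the standard representation-theoretic proof of the noncommutative Stone--Weierstrass theorem, using the GNS correspondence between pure states and irreducible representations and invoking the type I hypothesis only at the final, global step. Write $\widehat{\cU}$ for the set of unitary equivalence classes of irreducible representations of $\cU$. By GNS, every pure state of $\cU$ has the form $\langle\pi(\cdot)\xi,\xi\rangle$ for some irreducible $\pi\colon\cU\to\cB(H)$ and a unit (cyclic) vector $\xi$, and it determines $\pi$ up to equivalence. The separation hypothesis reformulates as: distinct pure states of $\cU$ stay distinct after restriction to $\cY$. The first thing I would prove is that for every irreducible $\pi$ the restriction $\pi|_{\cY}$ is again topologically irreducible. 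Indeed, if $\pi(\cY)''\ne\cB(H)$, then the commutant $\pi(\cY)'$ is a von Neumann algebra different from $\bC\,1$, hence contains a projection $P$ with $0\ne P\ne 1$. Choosing unit vectors $\xi\in PH$ and $\eta\in(1-P)H$ and setting $\zeta_\pm=2^{-1/2}(\xi\pm\eta)$, the fact that $P$ commutes with $\pi(\cY)$ makes the cross terms $\langle\pi(y)\xi,\eta\rangle$ and $\langle\pi(y)\eta,\xi\rangle$ vanish for all $y\in\cY$, so $\langle\pi(y)\zeta_+,\zeta_+\rangle=\langle\pi(y)\zeta_-,\zeta_-\rangle$. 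Thus the pure states $\langle\pi(\cdot)\zeta_\pm,\zeta_\pm\rangle$ agree on $\cY$ while being distinct (as $\zeta_+,\zeta_-$ are non-proportional and $\pi$ is irreducible), contradicting separation.

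Next I would show that restriction is injective on spectra: inequivalent $\pi_1,\pi_2\in\widehat{\cU}$ restrict to inequivalent representations of $\cY$. If a unitary $u\colon H_1\to H_2$ intertwined $\pi_1|_{\cY}$ and $\pi_2|_{\cY}$, then for a unit vector $\xi\in H_1$ the pure states $\langle\pi_1(\cdot)\xi,\xi\rangle$ and $\langle\pi_2(\cdot)u\xi,u\xi\rangle$ would coincide on $\cY$; separation would force them equal on all of $\cU$, and since their GNS representations are $\pi_1$ and $\pi_2$ this would give $\pi_1\cong\pi_2$, a contradiction. Combined with the previous step, the inclusion $\cY\hookrightarrow\cU$ induces an injection $\widehat{\cU}\to\widehat{\cY}$ under which each $\pi(\cY)$ still acts irreducibly.

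The remaining task is to upgrade this ``pointwise'' information to the global identity $\cY=\cU$, and this is where type I is indispensable. I would use the composition series of $\cU$: a transfinite increasing chain of closed two-sided ideals $(I_\rho)_{\rho\le\tau}$ with $I_0=\{0\}$, $I_\tau=\cU$, $I_\lambda=\overline{\bigcup_{\rho<\lambda}I_\rho}$ at limit ordinals, and each quotient $I_{\rho+1}/I_\rho$ a nonzero liminal ideal of $\cU/I_\rho$. I would prove $I_\rho\subseteq\cY$ by transfinite induction; the base case is trivial and the limit case follows because $\cY$ is closed. At a successor step, passing to $\cU/I_\rho$ (whose pure states are exactly those of $\cU$ annihilating $I_\rho$, so that the image of $\cY$ still separates them) reduces the problem to the following: a liminal closed ideal $J$ must be contained in a subalgebra that separates pure states. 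For each irreducible $\pi$ with $\pi(J)\ne 0$, liminality gives $\pi(J)=\mathcal K(H_\pi)$ and $\pi|_J$ irreducible; by the first step $\pi|_{\cY}$ is irreducible, and one forces $\pi(\cY)$ to contain the compacts $\pi(J)$ (an irreducible C*-algebra of operators that meets $\mathcal K(H_\pi)$ nontrivially contains all of $\mathcal K(H_\pi)$).

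The hard part is precisely this last local-to-global passage. Even granting $\pi(\cY)=\pi(\cU)$ for every $\pi$, one cannot naively conclude $\cY=\cU$, because $\inf_{y}\sup_{\pi}\|\pi(u-y)\|$ need not equal $\sup_{\pi}\inf_{y}\|\pi(u-y)\|$; the supremum over the spectrum and the infimum over $\cY$ do not commute. Resolving this requires Glimm's refinement of Kadison's transitivity theorem together with control of the topology of the spectrum along the liminal layers of the composition series, and it genuinely uses that every irreducible image contains the compact operators, a property equivalent to $\cU$ being type I. This is exactly the point at which the argument would break down for a general C*-algebra, consistent with the fact that the unrestricted noncommutative Stone--Weierstrass problem remains open; so I expect the verification of the successor step, i.e. that each liminal layer is absorbed into $\cY$, to be the principal obstacle, and it is the content for which the statement is attributed to Kaplansky.
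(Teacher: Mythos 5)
The paper itself contains no proof of this theorem: it is quoted as a particular case of Kaplansky's Theorem~7.2, in the reformulation of Dixmier's Theorem~11.1.8, so the only ``proof'' in the paper is a citation to the literature. Measured against the classical argument you are trying to reconstruct, your two preliminary reductions are correct and standard: the projection/cross-term argument showing that $\pi|_{\cY}$ remains irreducible for every irreducible representation $\pi$ of $\cU$, and the argument that inequivalent irreducible representations of $\cU$ restrict to inequivalent representations of $\cY$, both use the separation hypothesis properly.

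However, there is a genuine gap, and you flag it yourself: the successor step of your transfinite induction --- absorbing each liminal layer into $\cY$ --- is never proved; you only gesture at Glimm's lemma and the topology of the spectrum and call it ``the principal obstacle.'' This is not a routine verification left to the reader: it is the entire content of Kaplansky's theorem. Concretely, two things break. First, your local claim that $\pi(\cY)$ must contain $\pi(J)=\mathcal{K}(H_\pi)$ is unjustified: the fact you invoke (an irreducible C*-algebra of operators containing one nonzero compact operator contains all of them) requires first producing a nonzero compact operator in $\pi(\cY)$, which irreducibility of $\pi|_{\cY}$ alone does not supply. Second, and worse, under the hypotheses exactly as stated that local claim is false: take $\cU=\bC^2$ and $\cY=\bC\oplus\{0\}$. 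Then $\cU$ is unital and of type I, $\cY$ separates the two pure states (the coordinate functionals are distinguished by $(1,0)\in\cY$), yet $\cY\ne\cU$; for the irreducible representation $\pi(a,b)=b$ one has $\pi(\cY)=\{0\}\not\supseteq\mathcal{K}(\bC)$. The correct statement (Dixmier~11.1.8) requires $\cY$ to separate the pure states \emph{together with the zero functional}; in the paper's application this is automatic, since $\cX_{n,\al}$ contains the identity sequence $\ga_{n,\al}(1)$, but your sketch never uses the unit of $\cU$ or any separation from $0$. Hence whatever completes your successor step must invoke a hypothesis your outline ignores, which confirms that the missing step is not merely laborious bookkeeping but the place where all the real hypotheses and all the real work enter. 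As written, the proposal is a correct pair of reductions plus an acknowledgment that the theorem remains to be proved, not a proof.
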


Here the phrase ``$\cY$ separates the pure states of $\cU$'' means that for each pair $\ph,\psi$ in $\PureStates(\cU)$, if $\ph\ne\psi$,
then there exists $Y\in\cY$ such that
$\ph(Y)\ne\psi(Y)$.

In the rest of this section,
we explain that $\cL_n$ is a C*-algebra of type I and find $\PureStates(\cL_n)$.

First, let us recall the explicit description of the pure states of $\Mat_m$, where $m\in\bN$.
It is a particular case of the well-known description of the pure states 
of the C*-algebra of compact operators acting in a Hilbert space
(see \cite[Corollary~4.1.4]{Dixmier1977} or \cite[Example~5.1.1]{Murphy1990}).
We denote by $\bS_m$ the unit sphere in $\bC^m$:
\[
\bS_m \eqdef \{u\in\bC^m\colon\ \|u\|=1\}.
\]

\begin{prop}
\label{prop:pure_states_of_Mat}
$\PureStates(\Mat_m)
=\bigl\{\ph_u\colon\ u\in\bS_m\bigr\}$, where $\ph_u\colon\Mat_m\to\bC$ is defined by 
$\ph_u(A)\eqdef \langle Au,u\rangle$.
\end{prop}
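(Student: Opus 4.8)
The plan is to reduce the abstract description of pure states (extreme points of the state space) to a concrete linear-algebra statement about density matrices. First I would identify the states of $\Mat_m$ with density matrices via trace duality. Since $\Mat_m$, equipped with the Hilbert--Schmidt inner product $\langle A,B\rangle=\operatorname{tr}(AB^*)$, is a finite-dimensional Hilbert space, every linear functional on $\Mat_m$ has the form $A\mapsto\operatorname{tr}(\rho A)$ for a unique $\rho\in\Mat_m$. Testing positivity on the rank-one positive elements $A=vv^*$ gives $\operatorname{tr}(\rho\, vv^*)=\langle\rho v,v\rangle\ge 0$ for all $v$, so such a functional is positive if and only if $\rho\ge 0$, and the normalization $\ph(I)=1$ becomes $\operatorname{tr}\rho=1$. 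Hence $\rho\mapsto(A\mapsto\operatorname{tr}(\rho A))$ is an affine bijection from the set of density matrices onto the state space.

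Affine bijections carry extreme points to extreme points, so the pure states correspond exactly to the extreme points of the (compact, convex) set of density matrices, and the heart of the proof is to show these extreme points are precisely the rank-one orthogonal projections $uu^*$ with $u\in\bS_m$. For the forward direction I would invoke the spectral theorem: a density matrix decomposes as $\rho=\sum_k\la_k\,w_kw_k^*$ with an orthonormal family $(w_k)$, $\la_k\ge0$, and $\sum_k\la_k=1$; if two weights were positive, this would exhibit $\rho$ as a nontrivial convex combination of distinct density matrices, so extremality forces a single $\la_k=1$ and $\rho=ww^*$. For the converse I would show each $uu^*$ is extreme: if $uu^*=t\rho_1+(1-t)\rho_2$ with $0<t<1$, then for every $v\perp u$ the identity $0=\langle uu^*v,v\rangle=t\langle\rho_1v,v\rangle+(1-t)\langle\rho_2v,v\rangle$ with nonnegative summands forces $\rho_iv=0$, so each $\rho_i$ vanishes on $u^\perp$, is therefore a scalar multiple of $uu^*$, and being of trace one equals $uu^*$.

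Finally I would translate back through the bijection. Since $\operatorname{tr}(uu^*A)=\langle Au,u\rangle=\ph_u(A)$, the pure states are exactly the functionals $\ph_u$ with $u\in\bS_m$, and along the way one checks directly that each $\ph_u$ is a state, as $\ph_u(A^*A)=\|Au\|^2\ge0$ and $\ph_u(I)=\|u\|^2=1$. The main obstacle is the convexity analysis of the density matrices---in particular the clean use of the spectral theorem to rule out higher rank and the orthogonality argument establishing extremality of $uu^*$---while the identification of states with density matrices, though routine, is what makes the reduction possible. Alternatively, the whole statement follows by specializing the cited description of the pure states of the C*-algebra of compact operators on a Hilbert space.
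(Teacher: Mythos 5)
Your proof is correct, but it takes a genuinely different route from the paper: the paper offers no argument at all for this proposition, simply citing the known description of the pure states of the C*-algebra of compact operators on a Hilbert space (Dixmier, Corollary~4.1.4, or Murphy, Example~5.1.1), of which $\Mat_m$ is a special case. Your argument is a self-contained finite-dimensional one: identify states with density matrices via trace duality, identify pure states with extreme points of the state space, and then compute the extreme points of the density matrices --- the spectral theorem ruling out rank $\ge 2$, and the orthogonality argument establishing extremality of each $uu^*$. Both halves of that convexity analysis are sound; in the extremality step, note that the inference ``$\rho_i$ vanishes on $u^\perp$, hence $\rho_i$ is a scalar multiple of $uu^*$'' genuinely needs positivity (self-adjointness), since a general matrix killing $u^\perp$ could still map $u$ anywhere; for a positive $\rho_i$ one has $\operatorname{range}\rho_i=(\ker\rho_i)^\perp\subseteq\bC u$, which closes the step. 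The one caveat worth flagging is that your reduction treats ``pure state $=$ extreme point of the state space'' as the definition, whereas under the definition used in the paper's references (a state dominating only scalar multiples of itself) this equivalence is itself a standard but nontrivial theorem for unital C*-algebras; so your proof is not citation-free, it merely trades the paper's citation for a more elementary one. What each approach buys: the paper's citation is the shortest path and matches its general policy of quoting standard C*-theory; your argument makes the finite-dimensional case transparent and, as a bonus, the uniqueness of the density matrix $\rho$ immediately yields the remark following the proposition, namely that $\ph_u=\ph_v$ if and only if $uu^*=vv^*$, i.e., $v=\tau u$ for some $\tau\in\bT$.
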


Two vectors $u,v\in\bS_n$ yield the same pure state of $\Mat_m$ if and only if $u$ and $v$ are linearly dependent, i.e., there exists $\tau$ in $\bT$ such that $v=\tau u$.

Let $\hatOm_n\eqdef\Om_n\cup\{\infty\}$
be the one-point (Alexandroff) compactification of $\Om_n$,
where $\Om_n$ is defined by~\eqref{eq:Om_def}.
For every $\xi$ in $\hatOm_n$, we define
\[
d_{n,\xi}
\eqdef
\begin{cases}
\min\{n+\xi,n\},
& \xi\in\Om_n,
\\
1, & \xi=\infty;
\end{cases}
\qquad
\cA_\xi \eqdef \Mat_{d_{n,\xi}}.
\]
In other words,
$\cA_\xi=\Mat_{\min\{n+\xi,n\}}$ 
for $\xi$ in $\Om_n$
and $\cA_\infty=\Mat_1=\bC$.
We define
\[
\hatL_n\eqdef
\left\{A\in\bigoplus_{\xi\in\hatOm_n}\cA_\xi\colon\quad
\lim_{\xi\to\infty}A_\xi = A_\infty I_n
\right\}.
\]
Obviously, $\hatL_n$ is a C*-algebra, and it is isomorphic to $\cL_n$.
Indeed, each $A$ in $\cL_n$ can be extended to the domain $\hatOm_n$ by the rule
$A_\infty\eqdef\om$,
where
$\lim_{\xi\to\infty}A_\xi=\om I_n$.

\begin{prop}
\label{prop:extended_Ln_is_a_Cstar_bundle}
$\hatL_n$ is a C*-subalgebra of
$\bigoplus_{\xi\in\hatOm_n}\cA_\xi$.
Moreover, $\hatL_n$ has the following properties.
\begin{enumerate}
\item
For every $A$ in $\hatL_n$,
the function $\xi\mapsto\|A_\xi\|$ is continuous on $\hatOm_n$.
\item
For every $\xi$ in $\hatOm_n$,
$\{A_\xi\colon\ A\in\hatL_n\}=\cA_\xi$.

\item
$\hatOm_n$ contains the identity of $\bigoplus_{\xi\in\hatOm_n}\cA_\xi$, i.e., the family of identity matrices
$(I_{d_{n,\xi}})_{\xi\in\hatOm_n}$.

\item
For every $f$ in $C(\hatOm_n)$,
the family $(f(\xi)I_{d_{n,\xi}})_{\xi\in\hatOm_n}$
belongs to $\hatL_n$.

\item
For every $A$ in
$\bigoplus_{\xi\in\hatOm_n}\cA_\xi$,
if for every $\eps>0$ and every $\eta$ in $\hatOm_n$
there exists $B$ in $\hatL_n$ and an open neighborhood $N$ of $\eta$ such that $\|A_\xi-B_\xi\|<\eps$ for every $\xi$ in $N$,
then $A\in\hatL_n$.
\end{enumerate}
\end{prop}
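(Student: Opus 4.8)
The plan is to reduce every assertion to the single defining relation $\lim_{\xi\to\infty}A_\xi=A_\infty I_n$ together with the topology of $\hatOm_n$: every point of $\Om_n$ is isolated, and $\infty$ is the only accumulation point, so a neighborhood of $\infty$ is exactly a set containing $\infty$ and all but finitely many elements of $\Om_n$. To see that $\hatL_n$ is a $*$-subalgebra, I would check that the algebraic operations respect this relation. If $A,B\in\hatL_n$, then $(AB)_\xi=A_\xi B_\xi\to(A_\infty I_n)(B_\infty I_n)=(A_\infty B_\infty)I_n=(AB)_\infty I_n$, and similarly for linear combinations and for the involution, using $(A_\xi)^*\to\overline{A_\infty}\,I_n$; here one uses only that $I_n$ is the unit of $\Mat_n$ and that $A_\infty$ is a scalar. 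Norm-closedness follows from a routine $\eps/3$ estimate: given $A^{(k)}\to A$ uniformly with $A^{(k)}\in\hatL_n$, a single $k$ with $\|A^{(k)}-A\|<\eps/3$ controls both $|A_\infty-A^{(k)}_\infty|$ and $\sup_\xi\|A_\xi-A^{(k)}_\xi\|$, and the limit relation for $A^{(k)}$ then bounds $\|A_\xi-A_\infty I_n\|$ for large $\xi$.

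For Property~1, continuity at the isolated points of $\Om_n$ is automatic, so only continuity at $\infty$ is at issue; since $A_\xi\to A_\infty I_n$, continuity of the operator norm on $\Mat_n$ gives $\|A_\xi\|\to\|A_\infty I_n\|=|A_\infty|=\|A_\infty\|_{\cA_\infty}$. Properties~2--4 are verified by explicit constructions: for Property~2 at a finite $\xi$ I would take the sequence supported at $\xi$ (zero elsewhere, including at $\infty$), which trivially satisfies the limit relation, and at $\xi=\infty$ the constant family $(c\,I_{d_{n,\eta}})_{\eta}$, whose $\infty$-component is $c$; Property~3 is the observation that the identity family $(I_{d_{n,\xi}})_\xi$ satisfies $I_{d_{n,\xi}}\to I_n=1\cdot I_n$; and Property~4 follows because continuity of $f$ at $\infty$ gives $f(\xi)I_n\to f(\infty)I_n$, with boundedness coming from compactness of $\hatOm_n$.

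The real content is Property~5, so that is where I would concentrate. Given $A$ satisfying the local hypothesis, I would apply it only at $\eta=\infty$: for each $\eps>0$ there is $B\in\hatL_n$ and a neighborhood $N$ of $\infty$ with $\|A_\xi-B_\xi\|<\eps$ on $N$; since $N$ contains all sufficiently large $\xi$ and the point $\infty$ itself, one also gets $|A_\infty-B_\infty|<\eps$. Combining this with the limit relation $\|B_\xi-B_\infty I_n\|\to0$ for $B$, the triangle inequality yields $\|A_\xi-A_\infty I_n\|<3\eps$ for all large $\xi$, and letting $\eps\to0$ gives $\lim_{\xi\to\infty}A_\xi=A_\infty I_n$, i.e.\ $A\in\hatL_n$. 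The only subtlety throughout --- and the point I would be most careful about --- is the dimension bookkeeping between the scalar $A_\infty\in\cA_\infty=\bC$ and its image $A_\infty I_n$ in $\Mat_n$, together with the fact that the hypotheses at finite $\eta$ carry no information because those points are isolated; the entire proposition hinges on the behavior at $\infty$.
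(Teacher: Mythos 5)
Your proposal is correct, but on Property~5 --- the only part with real content --- you take a genuinely different route from the paper. The paper uses the hypothesis at \emph{every} point $\eta$ of $\hatOm_n$: it chooses approximants $B_\eta\in\hatL_n$ with neighborhoods $N_\eta$, arranges $N_\infty=\{\xi\in\bN\colon\ \xi>k\}\cup\{\infty\}$, glues them into the single element $X=\sum_{\eta=-n+1}^{k}1_{\{\eta\}}B_\eta+1_{N_\infty}B_\infty$ of $\hatL_n$ (the indicator functions $1_{\{\eta\}}$ and $1_{N_\infty}$ being continuous on $\hatOm_n$, so these products stay in $\hatL_n$), checks $\|X-A\|<\eps$ for each $\eps$, and concludes $A\in\hatL_n$ from norm-closedness. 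You instead invoke the hypothesis only at $\eta=\infty$ and verify the defining relation $\lim_{\xi\to\infty}A_\xi=A_\infty I_n$ directly by a $3\eps$ triangle inequality, with no gluing and no appeal to completeness. Your shortcut is legitimate precisely because of the observation you make explicit: membership in $\hatL_n$ constrains only the behavior at $\infty$, and at each isolated point $\eta\in\Om_n$ the hypothesis is automatically satisfiable (take $B$ equal to $A$ at $\eta$ and zero elsewhere, with $N=\{\eta\}$), so it carries no information. What the paper's longer argument buys is robustness: its gluing is the standard verification of the locality axiom for continuous fields of C*-algebras in Dixmier's sense, and it generalizes (via partitions of unity) to arbitrary compact base spaces, where your reduction to a single accumulation point is unavailable; this matters because the paper immediately feeds Proposition~\ref{prop:extended_Ln_is_a_Cstar_bundle} into Dixmier's continuous-field machinery in Propositions~\ref{prop:Ln_is_liminal} and~\ref{prop:pure_states_of_Ln}. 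Your detailed verifications of the $*$-algebra operations, the $\eps/3$ closedness argument, and Properties~1--4 fill in exactly what the paper dismisses with ``Properties 1--4 are obvious,'' and they are accurate, including the dimension bookkeeping between $A_\infty\in\cA_\infty=\bC$ and $A_\infty I_n\in\Mat_n$.
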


\begin{proof}
The fact that $\hatL_n$ is closed in
$\bigoplus_{\xi\in\hatOm_n}\cA_\xi$ can be proved in the same manner as the fact that the space of converging sequences $c(\bNz)$ is closed in $\ell^\infty(\bNz)$.
Properties 1--4 are obvious.

Let $A$ be as in property 5.
Given $\eps>0$, for every $\eta$ in $\hatOm_n$ we choose $B_\eta$ in $\hatL_n$ and an open neighborhood $N_\xi$ of $\hatOm_n$ such that $\|A_\xi-(B_\eta)_\xi\|<\eps$ for every $\xi$ in $N$.
Without loss of generality, we will suppose that $N_\infty$ is of the form $\{\xi\in\bN\colon\ \xi>k\}\cup\{\infty\}$ for some $k$ in $\bN$.
For every $\eta$ in $\Om_n$,
the indicator functions $1_{\{\eta\}}$ are continuous on $\hatOm_n$.
The indicator function
$1_{N_\infty}$ is also continuous on $\hatOm_n$.
Define $X\in\hatL_n$,
\[
X \eqdef \sum_{\eta=-n+1}^k 1_{\{\eta\}} B_\eta
+1_{N_\infty} B_\infty.
\]
Then $\|X-A\|<\eps$.
We conclude that $A\in\hatL_n$
because $\hatL_n$ is complete.
\end{proof}

\begin{prop}
\label{prop:Ln_is_liminal}
$\cL_n$ is a liminal C*-algebra.
Consequently, it is a C*-algebra of type I.
\end{prop}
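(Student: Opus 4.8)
The plan is to show that every irreducible representation of $\cL_n$ acts on a \emph{finite-dimensional} Hilbert space; since operators on a finite-dimensional space are compact, this yields at once that $\cL_n$ is liminal, and liminal C*-algebras are of type~I by \cite[Chapters~4,~9]{Dixmier1977}. I would argue with the isomorphic algebra $\hatL_n$ introduced above. The key object is the ideal
\[
J\eqdef\{A\in\hatL_n\colon\ A_\infty=0\}
=\Bigl\{A\in\bigoplus_{\xi\in\Om_n}\cA_\xi\colon\ \lim_{\xi\to\infty}\|A_\xi\|=0\Bigr\},
\]
i.e.\ the $c_0$-direct sum of the finite-dimensional algebras $\cA_\xi=\Mat_{d_{n,\xi}}$, $\xi\in\Om_n$. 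It is a closed two-sided ideal of $\hatL_n$, and the quotient map $A\mapsto A_\infty$ identifies $\hatL_n/J$ with $\bC$ (it is onto because the constant families $(\om I_{d_{n,\xi}})_{\xi}$ lie in $\hatL_n$, and its kernel is exactly $J$).

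Now let $\pi$ be an irreducible representation of $\hatL_n$ on a Hilbert space $H$. By the standard restriction principle (Dixmier~\cite[Section~2.11]{Dixmier1977}), $\pi|_J$ is either zero or irreducible. If $\pi(J)=0$, then $\pi$ factors through $\hatL_n/J\cong\bC$, hence $\pi$ is a character and $\dim H=1$. If $\pi(J)\ne0$, then $\pi|_J$ is an irreducible representation of the $c_0$-sum $J$; since the central unit projections $p_\xi$ of the summands are mutually orthogonal, irreducibility forces $\pi(p_{\xi_0})=I$ for a single $\xi_0\in\Om_n$ and $\pi(p_\xi)=0$ otherwise, so $\pi|_J$ is the coordinate evaluation $A\mapsto A_{\xi_0}$ followed by the (unique, standard) irreducible representation of $\Mat_{d_{n,\xi_0}}$ on $\bC^{d_{n,\xi_0}}$. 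Because an irreducible representation of an ideal extends uniquely to an irreducible representation of the whole algebra on the same space (Dixmier~\cite[Section~2.10]{Dixmier1977}), $\pi$ itself must be this evaluation, and $\dim H=d_{n,\xi_0}<\infty$.

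In either case $\pi(\hatL_n)\subseteq\cB(H)$ with $\dim H<\infty$, so $\pi(\hatL_n)$ consists of compact operators. As this holds for every irreducible $\pi$, the algebra $\hatL_n$, and hence $\cL_n$, is liminal, and therefore of type~I. The only points that require care are the classification of the irreducible representations of the ideal $J$ and the unique-extension step; once these are in place, finiteness of $\dim H$ is automatic and there is no genuine obstacle, all fibers being finite-dimensional. Alternatively, the properties recorded in Proposition~\ref{prop:extended_Ln_is_a_Cstar_bundle} could be used to identify the spectrum of $\hatL_n$ with $\hatOm_n$ directly and read off the same conclusion.
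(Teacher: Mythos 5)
Your proof is correct, but it takes a genuinely different route from the paper. The paper's own proof stays inside the continuous-field framework: it invokes Proposition~\ref{prop:extended_Ln_is_a_Cstar_bundle} to see $((\cA_\xi)_{\xi\in\hatOm_n},\hatL_n)$ as a continuous field of finite-dimensional C*-algebras over the compact space $\hatOm_n$, and then quotes Dixmier~\cite[Corollary~10.4.5]{Dixmier1977} to conclude liminality. You instead classify all irreducible representations directly from the extension $0\to J\to\hatL_n\to\bC\to 0$, where $J$ is the $c_0$-sum of the matrix fibers: restriction of an irreducible representation to the ideal is zero or irreducible, the mutually orthogonal central projections $p_\xi$ pin any irreducible representation of $J$ to a single fiber $\Mat_{d_{n,\xi_0}}$ (note that the existence of a $\xi_0$ with $\pi(p_{\xi_0})=I$ silently uses that finitely supported families are dense in $J$, otherwise $\pi|_J$ would vanish --- worth a sentence, but not a gap), and the unique-extension theorem for ideals carries this back to $\hatL_n$, forcing $\dim H<\infty$. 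What your approach buys is elementarity and extra information: it needs only the Chapter~2 facts of \cite{Dixmier1977} (Schur's lemma, restriction/extension across ideals) rather than Chapter~10, and it exhibits every irreducible representation of $\hatL_n$ as an evaluation at a point of $\hatOm_n$, which immediately recovers the pure-state description of Proposition~\ref{prop:pure_states_of_Ln} that the paper proves separately via Kaplansky's results on continuous fields. What the paper's route buys is economy within its own architecture --- Proposition~\ref{prop:extended_Ln_is_a_Cstar_bundle} is established anyway and serves double duty for liminality and for the pure states --- and robustness: the continuous-field argument would survive unchanged if the fibers were infinite-dimensional liminal algebras, where your finite-dimensionality argument would no longer apply.
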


\begin{proof}
By Proposition~\ref{prop:extended_Ln_is_a_Cstar_bundle},
$((\cA_\xi)_{\xi\in\hatOm_n},\widehat{\cL}_n)$ is a continuous field of C*-algebras
over $\hatOm_n$,
in the sense of Dixmier~\cite[Chapter~10]{Dixmier1977}.
Moreover, since $\hatOm_n$ is a compact space,
$\widehat{\cL}_n$ coincides with the C*-algebra defined by this continuous field of C*-algebras.
Each of the algebras $\cA_\xi$ is finite-dimensional.
Therefore, by~\cite[Corollary~10.4.5]{Dixmier1977},
$\widehat{\cL}_n$ is liminal.
Finally, $\cL_n$ is isomorphic to $\widehat{\cL}_n$.
\end{proof}

\begin{defn}
\label{defn:purestates}
For every $\xi$ in $\Om_n$
and every $u$ in $\bS_{\min\{n,n+\xi\}}$, we define
$\purestate_{\xi,u}\colon \cL_n\to\bC$ by
\[
\purestate_{\xi,u}(A)
\eqdef \langle A_\xi u, u \rangle.
\]
Furthermore, we define
$\purestate_\infty\colon
\cL_n\to\bC$ by
$\purestate_\infty(A)\eqdef \omega$,
if $\lim_{\xi\to\infty}A_\xi=\omega I_n$.
\end{defn}

If $\xi\in\Om_n$
and $u,v\in\bS_{\min\{n,n+\xi\}}$, then
\[
\purestate_{\xi,u}=\purestate_{\xi,v}
\qquad\Longleftrightarrow\qquad
\exists\tau\in\bT\quad v=\tau u.
\]
So, in the definition of the functionals
$\purestate_{\xi,u}$
we could use elements of the projective space
$\mathbb{P}(\bC^{\min\{n,n+\xi\}})$
instead of the unitary vectors,
but we decided to simplify the notation.
Notice also that if $u\in\bS_n$, then
\[
\purestate_\infty(A)
=\lim_{\xi\to\infty} \langle A_\xi u, u \rangle.
\]

\begin{prop}
\label{prop:pure_states_of_Ln}
All functionals from Definition~\ref{defn:purestates}
are pure states of $\cL_n$,
and every pure state of $\cL_n$ has such a form.
\end{prop}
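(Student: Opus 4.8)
The plan is to work throughout with the isomorphic copy $\hatL_n$ and the evaluation $*$-homomorphisms $\pi_\xi\colon\hatL_n\to\cA_\xi$, $A\mapsto A_\xi$, which are surjective for every $\xi\in\hatOm_n$ by property~2 of Proposition~\ref{prop:extended_Ln_is_a_Cstar_bundle}. The easy direction is to verify that the listed functionals are pure states. For $\xi\in\Om_n$ and $u\in\bS_{d_{n,\xi}}$, the functional $\purestate_{\xi,u}$ equals $\ph_u\circ\pi_\xi$, where $\ph_u$ is a pure state of $\cA_\xi=\Mat_{d_{n,\xi}}$ by Proposition~\ref{prop:pure_states_of_Mat}. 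Since the pull-back of a pure state along a surjective $*$-homomorphism is again a pure state (the pure states of a quotient $\cU/J$ correspond exactly to the pure states of $\cU$ annihilating the ideal $J$), $\purestate_{\xi,u}$ is pure. Likewise $\purestate_\infty=\pi_\infty$ is a character of $\hatL_n$ and hence a pure state.

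For the converse, let $\psi$ be an arbitrary pure state of $\hatL_n$ and let $(\pi,H,\zeta)$ be its GNS representation, so that $\pi$ is irreducible and $\psi(A)=\langle\pi(A)\zeta,\zeta\rangle$. First I would identify the center of $\hatL_n$: an element $Z=(Z_\xi)$ is central if and only if each $Z_\xi$ is scalar (because $\pi_\xi$ is onto $\Mat_{d_{n,\xi}}$), so the center consists of the families $(c_\xi I_{d_{n,\xi}})_{\xi\in\hatOm_n}$ with $c_\xi\to c_\infty$ and is therefore isomorphic to $C(\hatOm_n)$ via property~4 of Proposition~\ref{prop:extended_Ln_is_a_Cstar_bundle}. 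By Schur's lemma $\pi$ maps the center into $\bC\,\mathrm{id}_H$, which yields a character of $C(\hatOm_n)$, i.e.\ evaluation at a single point $\xi_0\in\hatOm_n$.

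The crux is to show that $\pi$ factors through $\pi_{\xi_0}$, i.e.\ that $\pi$ annihilates the ideal $\ker\pi_{\xi_0}=\{A\colon A_{\xi_0}=0\}$. I would argue by localization. Fix $A\in\ker\pi_{\xi_0}$ and put $B=A^*A$; then $B_{\xi_0}=0$ and, by property~1 of Proposition~\ref{prop:extended_Ln_is_a_Cstar_bundle}, the function $\xi\mapsto\|B_\xi\|$ is continuous on the compact Hausdorff space $\hatOm_n$. Given $\eps>0$, the set $\{\xi\colon\|B_\xi\|<\eps\}$ is an open neighborhood of $\xi_0$, so Urysohn's lemma provides $f\in C(\hatOm_n)$ with $0\le f\le1$, $f(\xi_0)=1$, and $f$ supported inside it. The central element $Z=(f(\xi)I_{d_{n,\xi}})$ then satisfies $\pi(Z)=f(\xi_0)\,\mathrm{id}_H=\mathrm{id}_H$ and $\|Z^2B\|\le\eps$, since $\|f(\xi)^2B_\xi\|\le\eps$ for every $\xi$. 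Consequently $\|\pi(B)\|=\|\pi(Z^2B)\|\le\|Z^2B\|\le\eps$, and letting $\eps\to0$ gives $\pi(B)=0$, whence $\pi(A)=0$. Thus $\pi$ descends to an irreducible representation of the matrix algebra $\cA_{\xi_0}$; since $\Mat_{d_{n,\xi_0}}$ has a unique irreducible representation up to unitary equivalence, a unitary intertwiner turns $\psi$ into $\purestate_{\xi_0,u}$ for some $u\in\bS_{d_{n,\xi_0}}$ when $\xi_0\in\Om_n$, and into $\purestate_\infty$ when $\xi_0=\infty$. I expect this localization step, which pins the irreducible representation to a single fibre of the continuous field, to be the main obstacle, the remaining facts about matrix algebras and the GNS construction being standard.
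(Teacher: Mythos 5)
Your proof is correct, but it takes a genuinely different route from the paper. The paper's own proof is essentially a citation: having verified in Proposition~\ref{prop:extended_Ln_is_a_Cstar_bundle} that $\hatL_n$ is (the C*-algebra of) a continuous field of C*-algebras over $\hatOm_n$ with finite-dimensional fibers, it invokes Kaplansky's structure theorems \cite[Theorem 3.1 and Corollary 1]{Kaplansky1951} (see also \cite[Theorem~10.4.3]{Dixmier1977}), which state that every pure state of such an algebra is the pull-back of a pure state of a single fiber $\cA_{\xi_0}$, and then applies Proposition~\ref{prop:pure_states_of_Mat}. You instead reprove this fiber-localization result from scratch for the algebra at hand: the easy direction via pull-back of pure states along the surjective evaluations $\pi_\xi$, and the converse via GNS, Schur's lemma applied to the center $\cong C(\hatOm_n)$ to pin down a base point $\xi_0$, and then the cut-off argument with central elements $Z=(f(\xi)I_{d_{n,\xi}})$ and continuity of $\xi\mapsto\|B_\xi\|$ to show that $\pi$ kills $\ker\pi_{\xi_0}$; this is exactly the mechanism behind Kaplansky's theorem, so your argument is a self-contained special case of it. What the paper's approach buys is brevity and a uniform framework (the same continuous-field structure also yields liminality in Proposition~\ref{prop:Ln_is_liminal}, needed for Theorem~\ref{thm:Kaplansky}); what your approach buys is independence from the structure theory of continuous fields, using only properties 1, 2 and 4 of Proposition~\ref{prop:extended_Ln_is_a_Cstar_bundle} plus standard GNS facts. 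One cosmetic remark: Urysohn's lemma is not needed, since $\hatOm_n$ is the one-point compactification of a discrete set, so the indicator functions $1_{\{\xi_0\}}$ (for $\xi_0\in\Om_n$) and $1_N$ (for cofinite neighborhoods $N$ of $\infty$) are already continuous, as the paper itself exploits in the proof of property 5.
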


\begin{proof}
By 
Proposition~\ref{prop:extended_Ln_is_a_Cstar_bundle}, $\widehat{\cL}_n$ satisfies conditions from
Kaplansky~\cite[Theorem 3.1 and Corollary 1]{Kaplansky1951}.
See also similar results in Naimark~\cite[Section~26]{Naimark1972}
and Dixmier~\cite[Theorem~10.4.3]{Dixmier1977}.
Thereby, the pure states of $\widehat{\cL}_n$ can be described in terms of the pure states of the ``fibers'' $\cA_\xi$:
\[
\PureStates(\widehat{\cL}_n)
=\Bigl\{\psi\colon
\widehat{\cL}_n\to\bC\colon
\quad
\exists \xi\in\hatOm_n
\quad
\exists \ph\in\PureStates(\cA_\xi)
\quad
\forall A\in\widehat{\cL}_n
\quad
\psi(A)=\ph(A_\xi)\Bigr\}.
\]
We put
$d_\xi\eqdef\min\{n+\xi,n\}$ for $\xi$ in $\Om_n$
and $d_\infty\eqdef 1$.
For every $\xi$ in $\hatOm_n$,
the pure states of $\cA_\xi$
are described by Proposition~\ref{prop:pure_states_of_Mat}.
So,
\[
\PureStates(\widehat{\cL}_n)
=\Bigl\{\psi\colon
\widehat{\cL}_n\to\bC\colon
\quad
\exists \xi\in\hatOm_n
\quad
\exists u\in\bS_{d_\xi}
\quad
\forall A\in\widehat{\cL}_n
\quad
\psi(A)=\langle A_\xi u,u\rangle\Bigr\}.
\]
We recall the natural isomorphism between $\cL_n$ and $\widehat{\cL}_n$,
and obtain the result.
\end{proof}

\section{One generating set of matrices}
\label{sec:one_generating_set_of_matrices}

The idea of this section was proposed
by Ram\'{i}rez~Ortega, Ram\'{i}rez~Mora, and S\'{a}nchez~Nungaray~\cite[end of the proof of Lemma~4.4]{RamirezRamirezSanchez2019},
but very briefly and only for a particular situation.
We explain this idea in a very formal and detailed way because it plays a crucial role in Sections~\ref{sec:algebra_gammas_fixed_frequency} and~\ref{sec:separate_pure_states_associated_to_different_frequencies}.

In this section, $n$ a general natural number,
not necessarily the same number as in the rest of the paper.

Given $p,q$ in $\{0,1,\ldots,n-1\}$,
we denote by $\E_{p,q}$ the basic matrix
\[
\E_{p,q}\eqdef
\bigl[\de_{p,j}\de_{q,k}\bigr]_{j,k=0}^{n-1}.
\]
The order of the matrix $\E_{p,q}$ will be clear from the context.

Before working with a general $n$ in $\bN$,
let us consider an example corresponding to $n=3$.

\begin{example}
Suppose that $G_0,G_1,G_2$ are matrices belonging to $\Mat_3$ of the following form (we denote by $\ast$ arbitrary numbers and by $\bullet$ nonzero numbers):
\[
G_0
=
\matr{ccc}{
\ast & \ast & \ast
\\
\ast & \ast & \ast
\\
\bullet & \ast & \ast
},
\qquad
G_1
=
\matr{ccc}{
\ast & \ast & \ast
\\
\ast & \ast & \ast
\\
0 & \bullet & \ast
},
\qquad
G_2
=
\matr{ccc}{
0 & 0 & 0
\\
0 & 0 & 0
\\
0 & 0 & \bullet
}.
\]
Let us show how to obtain $\E_{2,0}$, $\E_{2,1}$, $\E_{2,2}$
from $G_0$, $G_1$, $G_2$.
For brevity, we will denote by $G_{p,j,k}$ the $(j,k)$th component of $G_p$.
First,
\[
G_2 = G_{2,2,2} \E_{2,2}.
\]
Then
\begin{equation}
\label{eq:E22_via_G}
\E_{2,2}=\frac{1}{G_{2,2,2}} G_2.
\end{equation}
We also put
\[
\nu_{2,2}\eqdef\frac{1}{G_{2,2,2}^2}
\]
and get
\[
\E_{2,2}=\nu_{2,2} G_2^2.
\]
Now consider the product $\E_{2,2} G_1$:
\[
\E_{2,2} G_1
=
\matr{ccc}{
0 & 0 & 0 \\
0 & 0 & 0 \\
0 & G_{1,2,1} & G_{1,2,2}
}
=G_{1,2,1} \E_{2,1} + G_{1,2,2} \E_{2,2}.
\]
Solve this equation for $E_{2,1}$:
\[
\E_{2,1}
=\frac{1}{G_{1,2,1}}
(\E_{2,2}G_1 - G_{1,2,2} \E_{2,2}).
\]
Taking into account~\eqref{eq:E22_via_G},
\begin{equation}
\label{eq:E21_via_G}
\E_{2,1}
=\frac{1}{G_{1,2,1}G_{2,2,2}}\,G_2 G_1
-\frac{G_{1,2,2}\nu_{2,2}}{G_{1,2,1}}\,G_2^2
=\nu_{1,1} G_2 G_1+\nu_{1,2} G_2^2,
\end{equation}
where 
\[
\nu_{1,1}\eqdef \frac{1}{G_{1,2,1}G_{2,2,2}}, \qquad \nu_{1,2}\eqdef -\frac{G_{1,2,2}\nu_{2,2}}{G_{1,2,1}}.
\]
Consider the product $\E_{2,2} G_0$:
\[
\E_{2,2} G_0
=
\matr{ccc}{
0 & 0 & 0 \\
0 & 0 & 0 \\
G_{0,2,0} & G_{0,2,1} & G_{0,2,2}
}
= G_{0,2,0} \E_{2,0}
+ G_{0,2,1} \E_{2,1}
+ G_{0,2,2} \E_{2,2}.
\]
Solve this equation for $\E_{2,0}$:
\[
\E_{2,0}
=\frac{1}{G_{0,2,0}}
(\E_{2,2}G_0 - G_{0,2,1} \E_{2,1} - G_{0,2,2} \E_{2,2})
\]
Taking into account~\eqref{eq:E22_via_G} and~\eqref{eq:E21_via_G},
\begin{align*}
\E_{2,0}
&=\frac{1}{G_{0,2,0}}
\left(\frac{1}{G_{2,2,2}} G_2 G_0 
- G_{0,2,1}(\nu_{1,1}G_2 G_1 + \nu_{1,2}G_2^2)
- G_{0,2,2}\nu_{2,2}G_2^2\right)
\\
&=\nu_{0,0}G_2 G_0
+\nu_{0,1}G_2 G_1
+\nu_{0,2}G_2^2,
\end{align*}
where 
\[
\nu_{0,0}\eqdef
\frac{1}{G_{0,2,0} G_{2,2,2}},\qquad
\nu_{0,1}\eqdef
-\frac{\nu_{1,1} G_{0,2,1}}{G_{0,2,0}},\qquad 
\nu_{0,2}
\eqdef
-\frac{\nu_{1,2} G_{0,2,1} + \nu_{2,2} G_{0,2,2}}{G_{0,2,0}}.
\]
\end{example}

\begin{lem}[construct basic matrices from some special generators]
\label{lem:construct_basic_matrices_from_generators}
Let $G_0,\ldots,G_{n-1}$
be some matrices belonging to $\Mat_n$,
with the following properties:
\begin{enumerate}
\item[1)] $G_{n-1}$ is a nonzero multiple of $\E_{n-1,n-1}$,
\item[2)] for every $p$ in $\{0,\ldots,n-2\}$,
$(G_p)_{n-1,p}\ne0$,
\item[3)] for every $p$ in $\{1,\ldots,n-2\}$
and every $k<p$,
$(G_p)_{n-1,k}=0$.
\end{enumerate}
Then for every $p$ in $\{0,\ldots,n-1\}$,
there exist $\nu_{p,p},\ldots,\nu_{p,n-1}\in\bC$ such that
\begin{equation}
\label{eq:E_last_row_via_G}
\E_{n-1,p}
= G_{n-1} \sum_{j=p}^{n-1} \nu_{p,j} G_j.
\end{equation}
Moreover, for every $p,q$ in $\{0,\ldots,n-1\}$,
\begin{equation}
\label{eq:all_E_via_G}
\E_{p,q}
=
\left(\sum_{j=p}^{n-1} \overline{\nu_{p,j}} G_j^\ast\right) G_{n-1}^\ast G_{n-1}
\left(\sum_{k=q}^{n-1} \nu_{q,k} G_k\right).
\end{equation}
\end{lem}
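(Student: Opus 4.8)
The plan is to prove~\eqref{eq:E_last_row_via_G} first, by downward induction on $p$ from $n-1$ to $0$, and then deduce~\eqref{eq:all_E_via_G} as a direct algebraic consequence. The example with $n=3$ already exhibits the whole mechanism, so the task is simply to organize the induction cleanly and verify that the hypotheses 1)--3) are exactly what is needed at each step.

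For the base case $p=n-1$, hypothesis 1) gives $G_{n-1}=c\,\E_{n-1,n-1}$ with $c\ne0$, and since $\E_{n-1,n-1}$ is idempotent we have $G_{n-1}\E_{n-1,n-1}=c\,\E_{n-1,n-1}=G_{n-1}$, so $\E_{n-1,n-1}=G_{n-1}\cdot\tfrac{1}{c}\E_{n-1,n-1}=G_{n-1}\cdot\tfrac{1}{c^2}G_{n-1}$; this is~\eqref{eq:E_last_row_via_G} with $\nu_{n-1,n-1}=1/c^2$. For the inductive step, fix $p<n-1$ and assume~\eqref{eq:E_last_row_via_G} holds for all indices $>p$. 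The key computation is the product $\E_{n-1,n-1}G_p$. Because $\E_{n-1,n-1}$ extracts the $(n-1)$th row of $G_p$ and places it in the $(n-1)$th row, hypothesis 3) kills the entries in columns $k<p$ and we obtain
\[
\E_{n-1,n-1}G_p
=\sum_{k=p}^{n-1}(G_p)_{n-1,k}\,\E_{n-1,k}.
\]
Hypothesis 2) guarantees that the leading coefficient $(G_p)_{n-1,p}$ is nonzero, so I can solve for $\E_{n-1,p}$:
\[
\E_{n-1,p}
=\frac{1}{(G_p)_{n-1,p}}
\left(\E_{n-1,n-1}G_p-\sum_{k=p+1}^{n-1}(G_p)_{n-1,k}\,\E_{n-1,k}\right).
\]
Every term on the right is, by the base case and the inductive hypothesis, of the form $G_{n-1}(\text{linear combination of }G_p,\ldots,G_{n-1})$ (using $\E_{n-1,n-1}=\frac{1}{c}G_{n-1}$ for the first term), so collecting coefficients yields~\eqref{eq:E_last_row_via_G} for $p$. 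This closes the induction.

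To obtain~\eqref{eq:all_E_via_G}, I use the identity $\E_{p,q}=\E_{p,n-1}\E_{n-1,n-1}\E_{n-1,q}=\E_{n-1,p}^\ast\,\E_{n-1,n-1}\,\E_{n-1,q}$, which holds since $\E_{p,n-1}=\E_{n-1,p}^\ast$ and $\E_{n-1,n-1}=\E_{n-1,n-1}^\ast$. Substituting~\eqref{eq:E_last_row_via_G} for $\E_{n-1,q}$, taking the adjoint of~\eqref{eq:E_last_row_via_G} for $\E_{n-1,p}^\ast=\bigl(\sum_{j=p}^{n-1}\overline{\nu_{p,j}}G_j^\ast\bigr)G_{n-1}^\ast$, and replacing the middle factor $\E_{n-1,n-1}$ by $\frac{1}{c^2}G_{n-1}^\ast G_{n-1}$ (or more simply absorbing it into the already-present $G_{n-1}^\ast$ and $G_{n-1}$ factors), I arrive at~\eqref{eq:all_E_via_G} after rescaling the $\nu$'s. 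I expect no serious obstacle here; the main point requiring care is bookkeeping the coefficients in the inductive step and confirming that the claimed factorization $\E_{n-1,p}^\ast G_{n-1}^\ast G_{n-1}\E_{n-1,q}$ reproduces the stated right-hand side exactly, which is a routine rearrangement once the product structure $\E_{p,q}=\E_{n-1,p}^\ast\E_{n-1,n-1}\E_{n-1,q}$ is in place.
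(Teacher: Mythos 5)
Your proposal is correct and follows essentially the same route as the paper's proof: a descending induction that solves $\E_{n-1,n-1}G_p=\sum_{k\ge p}(G_p)_{n-1,k}\E_{n-1,k}$ for $\E_{n-1,p}$ using hypotheses 2) and 3), followed by assembling $\E_{p,q}$ from adjoints of the last-row identities. The only cosmetic difference is your insertion of the middle factor $\E_{n-1,n-1}$ in $\E_{p,q}=\E_{n-1,p}^\ast\E_{n-1,n-1}\E_{n-1,q}$ (the paper uses $\E_{p,q}=\E_{n-1,p}^\ast\E_{n-1,q}$ directly), and since that factor is absorbed by $G_{n-1}^\ast$, no rescaling of the $\nu$'s is actually needed.
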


\begin{proof}
We denote by $G_{p,j,k}$ the $(j,k)$th component of $G_p$.
Assumption 1) means that
\[
G_{n-1}=G_{n-1,n-1,n-1} \E_{n-1,n-1},\qquad
G_{n-1,n-1,n-1}\ne0.
\]
Therefore,
\begin{equation}
\label{eq:E_last_via_G_last_1}
\E_{n-1,n-1}
=\frac{1}{G_{n-1,n-1,n-1}}G_{n-1}.
\end{equation}
We also put $\nu_{n-1,n-1}=1/G_{n-1,n-1,n-1}^{2}$
and obtain~\eqref{eq:E_last_row_via_G} with $p=n-1$:
\[
\E_{n-1,n-1}
=\nu_{n-1,n-1} G_{n-1}^2.
\]
Now we proceed by steps, enumerating them in descending order from $n-2$ to $0$.
After performing the steps $n-2,\ldots,p+1$,
we already know that
$\E_{n-1,p+1},\ldots,\E_{n-1,n-1}$ are linear combinations of the products $G_{n-1} G_j$, $j\in\{p+1,\ldots,n-1\}$:
\begin{equation}
\label{eq:E_greater_than_p}
\E_{n-1,q}
= G_{n-1}
\sum_{j=q}^{n-1}\nu_{q,j} G_j
\qquad(p+1\le q<n).
\end{equation}
Step $p$.
In the matrix $\E_{n-1,n-1} G_p$,
the first $n-1$ rows are zero,
and the $(n-1)$th row coincides with the $(n-1)$th row of $G_p$.
Moreover, by assumption 3),
the first $p-1$ components in this row are zero.
Therefore,
\[
\E_{n-1,n-1} G_p
=G_{p,n-1,p} \E_{n-1,p}
+\sum_{q=p+1}^{n-1} G_{p,n-1,q} \E_{n-1,q}.
\]
By assumption 2), $G_{p,n-1,p}\ne0$.
Hence, we can solve the above equation for $\E_{n-1,p}$:
\[
\E_{n-1,p}
=\frac{1}{G_{p,n-1,p}}
\left(
\E_{n-1,n-1} G_p
-\sum_{q=p+1}^{n-1} G_{p,n-1,q} \E_{n-1,q}
\right).
\]
Substitute~\eqref{eq:E_last_via_G_last_1} and~\eqref{eq:E_greater_than_p}:
\begin{align*}
\E_{n-1,p}
&=\frac{1}{G_{p,n-1,p}}
\left(\frac{1}{G_{n-1,n-1,n-1}} G_{n-1} G_p
-\sum_{q=p+1}^{n-1}
G_{n-1} \sum_{j=q}^{n-1} \nu_{q,j}G_{p,n-1,q}  G_j\right)
\\
&=G_{n-1}
\left(\frac{1}{G_{p,n-1,p}G_{n-1,n-1,n-1}}
G_p
-\sum_{j=p+1}^{n-1}
\left(\sum_{q=p+1}^{j}
\frac{\nu_{q,j}G_{p,n-1,q}}{G_{p,n-1,p}}\right) G_j
\right).
\end{align*}
So, we have obtained~\eqref{eq:E_last_row_via_G} with
\[
\nu_{p,p}
=\frac{1}{G_{p,n-1,p}G_{n-1,n-1,n-1}},\qquad
\nu_{p,j}
=-\frac{1}{G_{p,n-1,p}}\sum_{q=p+1}^j \nu_{q,j}\,G_{p,n-1,q}
\quad(p+1\le j<n).
\]
After performing all steps ($n-2,n-1,\ldots,1,0$),
we have proved~\eqref{eq:E_last_row_via_G} for all $p$.
Finally, since
\[
\E_{p,q}
=\E_{p,n-1} \E_{n-1,q}
=\E_{n-1,p}^\ast \E_{n-1,q},
\]
\eqref{eq:all_E_via_G} follows from~\eqref{eq:E_last_row_via_G}.
\end{proof}

\begin{remark}
We have tested formulas~\eqref{eq:E_last_row_via_G} and~\eqref{eq:all_E_via_G} from Lemma~\ref{lem:construct_basic_matrices_from_generators} numerically in SageMath, using pseudorandom matrices $G_j$ with the required structure, for $n=2,3,4$.
\end{remark}

\begin{prop}
\label{prop:algebra_generated_by_matrices}
Let $G_0,\ldots,G_{n-1}$
be some matrices belonging to $\Mat_n$,
with the following properties:
\begin{enumerate}
\item[1)] $G_{n-1}$ is a nonzero multiple of $\E_{n-1,n-1}$,
\item[2)] for every $p$ in $\{0,\ldots,n-2\}$,
$(G_p)_{n-1,p}\ne0$,
\item[3)] for every $p$ in $\{1,\ldots,n-2\}$
and every $k<p$,
$(G_p)_{n-1,k}=0$.
\end{enumerate}
Let $\cA$ be the C*-subalgebra of $\Mat_n$
generated by $G_0,\ldots,G_{n-1}$.
Then $\cA=\Mat_n$.
\end{prop}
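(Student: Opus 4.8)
The plan is to show that $\cA=\Mat_n$ by establishing that $\cA$ contains every basic matrix $\E_{p,q}$ for $p,q\in\{0,\ldots,n-1\}$, since these span $\Mat_n$ linearly. The key observation is that Lemma~\ref{lem:construct_basic_matrices_from_generators} already provides explicit formulas expressing each $\E_{p,q}$ as a polynomial (without constant term) in $G_0,\ldots,G_{n-1}$ and their adjoints. Indeed, formula~\eqref{eq:all_E_via_G} writes
\[
\E_{p,q}
=
\left(\sum_{j=p}^{n-1} \overline{\nu_{p,j}} G_j^\ast\right) G_{n-1}^\ast G_{n-1}
\left(\sum_{k=q}^{n-1} \nu_{q,k} G_k\right),
\]
which is manifestly a finite sum of products of the generators and their adjoints, with complex coefficients.

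The argument then proceeds in a few short steps. First I would verify that the hypotheses 1)--3) of Proposition~\ref{prop:algebra_generated_by_matrices} are verbatim the hypotheses of Lemma~\ref{lem:construct_basic_matrices_from_generators}, so the lemma applies directly. Second, I would invoke the fact that a C*-subalgebra is closed under multiplication, addition, scalar multiplication, and the adjoint operation; consequently, every expression of the form~\eqref{eq:all_E_via_G} belongs to $\cA$. Hence $\E_{p,q}\in\cA$ for all $p,q$. Third, since $\{\E_{p,q}\colon 0\le p,q\le n-1\}$ is a linear basis of $\Mat_n$ and $\cA$ is a linear subspace, we conclude $\Mat_n\subseteq\cA$. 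The reverse inclusion $\cA\subseteq\Mat_n$ is immediate because $\cA$ is by definition a subalgebra of $\Mat_n$.

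There is essentially no genuine obstacle here, because the heavy lifting has been isolated into Lemma~\ref{lem:construct_basic_matrices_from_generators}: that lemma is where the antitriangular structure of the generators is actually exploited through the descending-induction construction. The only point requiring a moment of care is to ensure that~\eqref{eq:all_E_via_G} really is a legitimate element of a \emph{C*-algebra} generated by $G_0,\ldots,G_{n-1}$, namely that it uses only the algebraic operations and the involution available in $\cA$ (no inverses, no limits beyond the finite-dimensional closure, which is automatic). Once this is observed, the proof of Proposition~\ref{prop:algebra_generated_by_matrices} is a one-line deduction from Lemma~\ref{lem:construct_basic_matrices_from_generators} together with the spanning property of the basic matrices.
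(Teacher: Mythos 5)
Your proof is correct and follows essentially the same route as the paper's: both apply Lemma~\ref{lem:construct_basic_matrices_from_generators} to conclude $\E_{p,q}\in\cA$ for all $p,q$ and then use that the basic matrices span $\Mat_n$. Your extra remark that formula~\eqref{eq:all_E_via_G} involves only sums, products, scalars, and adjoints of the generators is a point the paper leaves implicit, but it is not a substantive difference.
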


\begin{proof}
By Lemma~\ref{lem:construct_basic_matrices_from_generators},
$\E_{p,q}\in\cA$
for every $p,q$ in $\{0,\ldots,n-1\}$.
The linear combinations of
$\E_{p,q}$, $p,q\in\{0,\ldots,n-1\}$,
yield the whole algebra $\cA$.
\end{proof}

\begin{remark}
\label{rem:construct_basic_matrices_from_real_symmetric_generators}
Let $G_0,\ldots,G_{n-1}$
be some matrices belonging to $\Mat_n(\bC)$
and satisfying the conditions
from Lemma~\ref{lem:construct_basic_matrices_from_generators}.
Moreover, suppose that the matrices $G_0,\ldots,G_{n-1}$ are real and symmetric.
Then the coefficients $\nu_{p,j}$ are real and~\eqref{eq:all_E_via_G} simplifies to 
\[
\E_{p,q}
=
\left(\sum_{j=p}^{n-1} \nu_{p,j} G_j\right) G_{n-1}^2
\left(\sum_{k=q}^{n-1} \nu_{q,k} G_k\right).
\]
\end{remark}

In this paper, it will be convenient to use the following technical concept.
A more precise name would be ``$p$-antitriangular matrix where all components of the $p$th antidiagonal are nonzero'',
but we will use a simplified terminology.

\begin{defn}[$p$-antitriangular matrix]
\label{def:antitriangular}
Let $A\in\Mat_n$ and $p\in\bZ$.
We say that $A$
is
\emph{$p$-antitriangular},
if for every $j,k$ in $\{0,\ldots,n-1\}$,
\begin{align*}
A_{j,k}&=0,\quad\text{if}\quad j+k<p;
\\
A_{j,k}&\ne0,\quad
\text{if}\quad
j+k=p.
\end{align*}
\end{defn}

\begin{remark}
\label{rem:antitriangular_is_zero}
If $A\in\Mat_n$ and $A$ is $p$-antitriangular with $p>2n-2$, then $A$ is the zero matrix.
Indeed, 
$j+k\le 2n-2<p$
for every $j,k$ in $\{0,\ldots,n-1\}$.
\end{remark}

\begin{remark}
\label{rem:antitriangular_serve_as_generators}
If $G_0,\ldots,G_{n-1}\in\Mat_n$ such that for every $p$ the matrix $G_p$ is $(n-1+p)$-antitriangular,
then $G_0,\ldots,G_{n-1}$ satisfy the conditions of Lemma~\ref{lem:construct_basic_matrices_from_generators}.
\end{remark}

\section{Algebra generated by the matrices corresponding to a fixed frequency}
\label{sec:algebra_gammas_fixed_frequency}

The following generating symbols play a crucial role in this paper.

\begin{defn}
\label{def:g_p}
For every $p$ in $\bNz$,
we define $g_{p,\al}\in L^\infty([0,1))$ by
\begin{equation}
\label{eq:gen_symbol_jac}
g_{p,\al}(t)\eqdef
Q^{(\al,0)}_p(t^2).
\end{equation}
For brevity, we write $g_p$ instead of $g_{p,\al}$, because $\al$ is fixed.
\end{defn}

We notice that $g_p$ is a real function and $g_p\in\BL$.

From now on, for brevity,
we write $\ga$ instead of $\ga_{n,\al}$, because $n$ and $\al$ are fixed.

\begin{lem}
\label{lem:gamma_xi_associated_to_Q}
Let $p\in\bNz$ and $\xi\in\Om_n$.
Then $\ga(g_p)_\xi$ is $(p-|\xi|)$-antitriangular,
real, and symmetric.
\end{lem}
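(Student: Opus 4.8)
The plan is to write $\ga(g_p)_\xi$ entrywise via~\eqref{eq:gamma_as_integral} and to read off all three assertions from the orthogonality encoded in Proposition~\ref{prop:int_Q_by_polynomial}. First I would substitute $a=g_p$. Since $g_p(\sqrt{t})=Q_p^{(\al,0)}(t)$ by Definition~\ref{def:g_p}, formula~\eqref{eq:gamma_as_integral} gives
\[
\be_{g_p,\al,\xi,j,k}
=
\jaccoef{\al}{|\xi|}{j}\jaccoef{\al}{|\xi|}{k}
\int_0^1 Q_p^{(\al,0)}(t)\,Q_j^{(\al,|\xi|)}(t)\,Q_k^{(\al,|\xi|)}(t)\,(1-t)^\al t^{|\xi|}\,\dif t.
\]
Because $\al>-1$, all three Jacobi polynomials have real coefficients (by~\eqref{eq:shifted_Jacobi_explicit}) and the normalizing coefficients in~\eqref{eq:jaccoef} are positive reals, so each entry is real; symmetry in $j,k$ is exactly the content of Remark~\ref{rem:gammas_are_symmetric}. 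Thus \emph{real} and \emph{symmetric} are immediate, and it remains only to verify the antitriangular structure.

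For that, the key step is to reassign the factor $t^{|\xi|}$ from the weight to the polynomial part, so that the remaining weight matches the orthogonality weight $(1-t)^\al$ of $Q_p^{(\al,0)}$. Setting $f(t)\eqdef Q_j^{(\al,|\xi|)}(t)\,Q_k^{(\al,|\xi|)}(t)\,t^{|\xi|}$, the integral above equals $\int_0^1 Q_p^{(\al,0)}(t)\,f(t)\,(1-t)^\al\,\dif t$ up to the positive factor $\jaccoef{\al}{|\xi|}{j}\jaccoef{\al}{|\xi|}{k}$. Since $Q_j^{(\al,|\xi|)}$ and $Q_k^{(\al,|\xi|)}$ have degrees exactly $j$ and $k$ with nonzero leading coefficients, $f$ is a polynomial of degree exactly $j+k+|\xi|$. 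I would then apply Proposition~\ref{prop:int_Q_by_polynomial} with $m=p$: if $j+k+|\xi|<p$, i.e.\ $j+k<p-|\xi|$, the integral vanishes, whereas if $j+k+|\xi|=p$, i.e.\ $j+k=p-|\xi|$, it is nonzero. Multiplying by the nonzero normalizing coefficients preserves both conclusions, which is precisely the statement that $\ga(g_p)_\xi$ is $(p-|\xi|)$-antitriangular in the sense of Definition~\ref{def:antitriangular}.

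I do not expect any genuine obstacle here: the whole lemma is an orthogonality computation, and the single idea worth isolating is the reweighting that absorbs $t^{|\xi|}$ into $f$ and converts the integral into an instance covered by Proposition~\ref{prop:int_Q_by_polynomial}. The only point requiring a line of care is the bookkeeping that $\deg f = j+k+|\xi|$ \emph{exactly}, so that the degree-$<p$ versus degree-$=p$ dichotomy applies cleanly; this rests on the standard fact that the leading coefficient of $Q_m^{(\al,|\xi|)}$, read off from~\eqref{eq:shifted_Jacobi_explicit}, is nonzero. Finally, when $p-|\xi|<0$ the antitriangular condition is vacuous, so nothing extra need be checked in that range.
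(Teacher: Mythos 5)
Your proof is correct and follows essentially the same route as the paper: realness and symmetry from Remark~\ref{rem:gammas_are_symmetric} and the real coefficients, then absorbing $t^{|\xi|}$ into the polynomial $f$ and invoking Proposition~\ref{prop:int_Q_by_polynomial}. The only cosmetic difference is that you treat all $\xi$ uniformly via $|\xi|$, whereas the paper splits the cases $\xi\ge 0$ and $\xi<0$; the underlying computation is identical.
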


\begin{proof}
By Remark~\ref{rem:gammas_are_symmetric}, $\ga(g_p)_\xi$ is symmetric.
Since $g_p$ is a real function,
all components of $\ga(g_p)_\xi$ are real numbers.

We are left to prove that $\ga(g_p)$ is $(p-|\xi|)$-antitriangular,
in the sense of Definition~\ref{def:antitriangular}.
First, consider the case if $\xi\ge0$.
Let $j,k\in\{0,\ldots,n-1\}$.
Define $f\colon[0,1)\to\bR$,
$f(t)\eqdef Q_j^{(\al,\xi)}(t)Q_k^{(\al,\xi)}(t)\,t^\xi$.
Then
\begin{align*}
\ga(g_p)_{\xi,j,k}
&=
\jaccoef{\al}{\xi}{j}
\jaccoef{\al}{\xi}{k}
\int_0^1 Q_p^{(\al,0)}(t)\,
Q_j^{(\al,\xi)}(t)
Q_k^{(\al,\xi)}(t)
\,
(1-t)^\al t^\xi\,\dif{}t
\\[1ex]
&=
\jaccoef{\al}{\xi}{j}
\jaccoef{\al}{\xi}{k}
\int_0^1
Q_p^{(\al,0)}(t)\,
f(t)\,
(1-t)^\al\,\dif{}t.
\end{align*}
We notice that $f$ is a polynomial of degree $j+k+\xi$.
Therefore, by Proposition~\ref{prop:int_Q_by_polynomial},
\begin{align*}
\ga(g_p)_{\xi,j,k}
&=0,\quad \text{if}\quad
j+k<p-\xi,
\\
\ga(g_p)_{\xi,j,k}
&\ne 0,\quad \text{if}\quad
j+k=p-\xi.
\end{align*}
This means that $\ga(g_p)$ is $(p-\xi)$-antitriangular.

For $\xi$ in $\{-n+1,\ldots,-1\}$,
the reasoning is very similar, but
$j,k\in\{0,\ldots,n+\xi-1\}$,
and $f$ is defined by
\[
f(t)\eqdef Q_j^{(\al,|\xi|)}(t)Q_k^{(\al,|\xi|)}(t)t^{|\xi|}.
\]
Then $f$ is  a polynomial of degree $j+k+|\xi|$,
\[
\ga(g_p)_{\xi,j,k}
=\be_{g_p,\al,\xi,j,k}
=\be_{g_p,\al,|\xi|,j,k}
=\jaccoef{\al}{|\xi|}{j}
\jaccoef{\al}{|\xi|}{k}
\int_0^1
Q_p^{(\al,0)}(t)\,f(t)\,
(1-t)^\al\,\dif{}t,
\]
and by Proposition~\ref{prop:int_Q_by_polynomial} we obtain that $\ga(g_p)$ is $(p-|\xi|)$-antitriangular.
\end{proof}

\begin{prop}
\label{prop:algebra_generated_by_gammas_for_a_fixed_positive_frequency}
Let $\xi\in\bNz$.
Then the C*-algebra generated by the matrices
\[
\ga(g_{n-1+\xi})_\xi,\quad
\ga(g_{n+\xi})_\xi,\quad
\ldots,\quad
\ga(g_{2n-2+\xi})_\xi,
\]
coincides with $\Mat_n$.
\end{prop}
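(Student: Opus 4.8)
The plan is to reduce this statement to an application of Proposition~\ref{prop:algebra_generated_by_matrices} by verifying that the matrices
\[
G_0 \eqdef \ga(g_{n-1+\xi})_\xi,\quad
G_1 \eqdef \ga(g_{n+\xi})_\xi,\quad
\ldots,\quad
G_{n-1} \eqdef \ga(g_{2n-2+\xi})_\xi
\]
satisfy the three hypotheses of that proposition (equivalently, the antitriangularity hypothesis of Remark~\ref{rem:antitriangular_serve_as_generators}). The crucial observation is that the index shift has been chosen precisely so that each generator lands on the right antidiagonal.

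First I would apply Lemma~\ref{lem:gamma_xi_associated_to_Q}, which tells us that for each $p\in\bNz$ the matrix $\ga(g_p)_\xi$ is $(p-\xi)$-antitriangular (here $\xi\ge0$, so $|\xi|=\xi$), real, and symmetric. Substituting $p=n-1+\xi+r$ for $r=0,\ldots,n-1$, the matrix $G_r=\ga(g_{n-1+\xi+r})_\xi$ is $\bigl((n-1+\xi+r)-\xi\bigr)$-antitriangular, i.e.\ $(n-1+r)$-antitriangular. This is exactly the hypothesis of Remark~\ref{rem:antitriangular_serve_as_generators}: the $r$th generator $G_r$ is $(n-1+r)$-antitriangular.

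Next I would invoke Remark~\ref{rem:antitriangular_serve_as_generators} to conclude that $G_0,\ldots,G_{n-1}$ satisfy the three numbered conditions of Lemma~\ref{lem:construct_basic_matrices_from_generators}. Concretely, $G_{n-1}$ is $(2n-2)$-antitriangular, and since $2n-2$ is the maximal possible antidiagonal index in $\Mat_n$, the only nonzero entry is in position $(n-1,n-1)$, so $G_{n-1}$ is a nonzero multiple of $\E_{n-1,n-1}$ (condition 1). For $G_p$ with $0\le p\le n-2$, being $(n-1+p)$-antitriangular forces the $(n-1)$th row entry in column $p$ to be nonzero (condition 2) and all entries to its left in that row to vanish (condition 3), since $j+k=(n-1)+k<n-1+p$ exactly when $k<p$. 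Having verified the hypotheses, Proposition~\ref{prop:algebra_generated_by_matrices} directly gives that the C*-algebra generated by $G_0,\ldots,G_{n-1}$ equals $\Mat_n$.

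I do not expect a genuine obstacle here, since all the machinery has been set up in the preceding sections; the only point requiring care is the bookkeeping of indices, namely confirming that the shift $p=n-1+\xi+r$ makes $G_r$ exactly $(n-1+r)$-antitriangular and that the extremal case $r=n-1$ collapses to a multiple of $\E_{n-1,n-1}$. The essential content of the proposition is thus that the specific choice of Jacobi-based symbols $g_{n-1+\xi},\ldots,g_{2n-2+\xi}$ produces generators whose nonzero antidiagonals sweep through all admissible positions, and this is precisely what the antitriangular framework of Section~\ref{sec:one_generating_set_of_matrices} was designed to exploit.
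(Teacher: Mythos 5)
Your proposal is correct and follows essentially the same route as the paper's own proof: define $G_r\eqdef\ga(g_{n-1+\xi+r})_\xi$, apply Lemma~\ref{lem:gamma_xi_associated_to_Q} to see that $G_r$ is $(n-1+r)$-antitriangular, then invoke Remark~\ref{rem:antitriangular_serve_as_generators} and Proposition~\ref{prop:algebra_generated_by_matrices}. Your extra step of explicitly unpacking why antitriangularity yields the three hypotheses (the content of that remark) is sound index bookkeeping, just more detailed than the paper's one-line citation.
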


\begin{proof}
For each $p$ in $\{0,\ldots,n-1\}$, we put
\[
G_p\eqdef\ga(g_{n-1+\xi+p})_\xi.
\]
By Lemma~\ref{lem:gamma_xi_associated_to_Q},
$G_p$ is $(n-1+p)$-antitriangular.
By Remark~\ref{rem:antitriangular_serve_as_generators},
matrices $G_0,\ldots,G_{n-1}$ satisfy the conditions required in Proposition~\ref{prop:algebra_generated_by_matrices},
and the C*-algebra generated by
these matrices is $\Mat_n$.
\end{proof}

For $\xi<0$,
we have the following analog
of Proposition~\ref{prop:algebra_generated_by_gammas_for_a_fixed_positive_frequency}.

\begin{prop}
\label{prop:algebra_generated_by_gammas_for_a_fixed_negative_frequency}
Let $\xi\in\{-n+1,\ldots,-1\}$.
Then the C*-algebra generated by the matrices
\[
\ga(g_{n-1})_\xi,\quad
\ga(g_{n})_\xi,\quad
\ldots,\quad
\ga(g_{2n-2+\xi})_\xi,
\]
coincides with $\Mat_{n+\xi}$.
\end{prop}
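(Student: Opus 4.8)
The statement is the negative-frequency counterpart of Proposition~\ref{prop:algebra_generated_by_gammas_for_a_fixed_positive_frequency}, and the plan is to mirror that proof, with the matrix order reduced from $n$ to $m\eqdef n+\xi$. Since $\xi$ is negative, Remark~\ref{rem:gammas_for_negative_frequencies} tells us each $\ga(g_p)_\xi$ lives in $\Mat_{n+\xi}=\Mat_m$, and the listed generators $\ga(g_{n-1})_\xi,\ldots,\ga(g_{2n-2+\xi})_\xi$ are exactly $m$ in number. So first I would relabel them as $G_p\eqdef\ga(g_{n-1+p})_\xi$ for $p\in\{0,\ldots,m-1\}$.

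Next I would read off the antitriangular structure from Lemma~\ref{lem:gamma_xi_associated_to_Q}: the matrix $\ga(g_{n-1+p})_\xi$ is $\bigl((n-1+p)-|\xi|\bigr)$-antitriangular. The key arithmetic step is that $|\xi|=-\xi$, so this degree equals $n-1+p+\xi=m-1+p$. Thus $G_p$ is $(m-1+p)$-antitriangular. In particular $G_0$ is $(m-1)$-antitriangular and $G_{m-1}$ is $(2m-2)$-antitriangular, i.e. a nonzero multiple of $\E_{m-1,m-1}$, exactly the pattern required by the constructions in Section~\ref{sec:one_generating_set_of_matrices}.

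To finish, I would apply Remark~\ref{rem:antitriangular_serve_as_generators} and Proposition~\ref{prop:algebra_generated_by_matrices} with the ambient dimension taken to be $m$; that section was deliberately written for a general natural number, so nothing is lost by the substitution. This yields that the C*-algebra generated by $G_0,\ldots,G_{m-1}$ is $\Mat_m=\Mat_{n+\xi}$, which is the claim. The only real subtlety---rather than a genuine obstacle---is the bookkeeping: checking both that the given list contains precisely $m$ matrices and that the antitriangularity degrees align with the reduced dimension, producing $m-1+p$ instead of $n-1+p$. Once the identity $|\xi|=-\xi$ is inserted, the whole argument collapses onto the positive-frequency case.
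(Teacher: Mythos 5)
Your proposal is correct and matches the paper's own proof essentially verbatim: both set $G_p\eqdef\ga(g_{n-1+p})_\xi$, invoke Lemma~\ref{lem:gamma_xi_associated_to_Q} to see that $G_p$ is $(n+\xi-1+p)$-antitriangular, and then apply Proposition~\ref{prop:algebra_generated_by_matrices} with $n+\xi$ in place of $n$. The only difference is cosmetic: you make the substitution $m=n+\xi$ and the counting of generators explicit, which the paper leaves implicit.
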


\begin{proof}
For every $p$ in $\{0,\ldots,n-1+\xi\}$,
we put
\[
G_p \eqdef \ga(g_{n-1+p})_\xi.
\]
By Lemma~\ref{lem:gamma_xi_associated_to_Q},
$G_p$ is $(n+\xi-1+p)$-antidiagonal.
Therefore, the matrices $G_p$ with
$p$ in $\{0,\ldots,n+\xi-1\}$
satisfy conditions of Proposition~\ref{prop:algebra_generated_by_matrices},
with $n+\xi$ instead of $n$.
\end{proof}

\begin{lem}
\label{lem:linear_independent_unitary_vectors}
Let $u,v\in\bS_n$ such that $u,v$ are linearly independent.
Then there exist $p,q$ in $\{0,\ldots,n-1\}$ such that
\[
u_p \overline{u_q} \ne v_p \overline{v_q}.
\]
\end{lem}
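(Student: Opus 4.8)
The plan is to prove the contrapositive: assuming that $u_p\overline{u_q}=v_p\overline{v_q}$ for \emph{all} $p,q$ in $\{0,\ldots,n-1\}$, I would show that $u$ and $v$ are linearly dependent. The conceptual content is that the matrix $[u_p\overline{u_q}]_{p,q=0}^{n-1}$ is exactly the outer product $u u^{\ast}$, and since $\|u\|=1$ this is the orthogonal projection of $\bC^n$ onto the line $\lin\{u\}$; likewise $[v_p\overline{v_q}]_{p,q}$ is the projection onto $\lin\{v\}$. The standing assumption says that these two projections coincide, hence they share the same range, so $\lin\{u\}=\lin\{v\}$ and $u,v$ are linearly dependent. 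This already settles the lemma, but for a self-contained argument I would replace the projection language by an elementary coordinate computation.

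For the coordinate argument: since $u\in\bS_n$ is a unit vector, it is nonzero, so I fix an index $q_0$ with $u_{q_0}\ne 0$. Taking $p=q=q_0$ in the assumption yields $|u_{q_0}|^2=|v_{q_0}|^2$, whence $v_{q_0}\ne 0$ as well. I then set $\tau\eqdef\overline{v_{q_0}}/\overline{u_{q_0}}$, so that $|\tau|=|v_{q_0}|/|u_{q_0}|=1$ and therefore $\tau\in\bT$. Specializing the assumption to $q=q_0$ and letting $p$ range over $\{0,\ldots,n-1\}$ gives $u_p\overline{u_{q_0}}=v_p\overline{v_{q_0}}$, that is $u_p=\tau v_p$ for every $p$, i.e.\ $u=\tau v$. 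Thus $u$ and $v$ are linearly dependent, as required.

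There is essentially no obstacle here: the statement is the elementary fact that the outer-product matrix of a unit vector determines that vector up to a unimodular phase. The only points requiring the slightest care are the existence of the nonzero coordinate $q_0$ (guaranteed by $\|u\|=1$) and the verification that $|\tau|=1$; the remainder is a single substitution. I note that this phrasing as the contrapositive matches how the lemma will presumably be used later, namely to produce, for linearly independent $u$ and $v$, a coordinate pair $(p,q)$ on which the associated pure states $\purestate_{\xi,u}$ and $\purestate_{\xi,v}$ can be told apart via the basic matrix $\E_{p,q}$.
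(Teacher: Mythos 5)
Your proof is correct, and it is essentially the paper's argument in contrapositive form: both proofs pick a coordinate where $u$ does not vanish, compare moduli on the diagonal, introduce the unimodular ratio $\tau$ of the two coordinates, and conclude with a single substitution showing that the outer-product matrix determines the unit vector up to a phase. The only difference is expository (the paper argues directly, splitting into the cases $|v_p|\ne|u_p|$ and $|v_p|=|u_p|$, while you assume all the products agree and deduce $u=\tau v$), so no further comparison is needed.
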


\begin{proof}
Let $p\in\{0,\ldots,n-1\}$ such that $u_p\ne0$.
If $|v_p|\ne|u_p|$, then we obtain the desired result with $q=p$.
Suppose that $|v_p|=|u_p|$.
Put $\tau \eqdef v_p/u_p$.
Then $|\tau|=1$ and $\tau^{-1}=\overline{\tau}$.
Since $v\ne \tau u$, there exists $q$ in $\{0,\ldots,n-1\}$ such that $v_{q}\neq \tau u_{q}$, or $\overline{v_{q}}\neq \overline{\tau}\overline{u_{q}}=\tau^{-1}\overline{u_{q}}$. Thus, multiplying both sides of the last inequality by $v_p$ we get
\[
v_{p}\overline{v_{q}}\neq v_{p}\tau^{-1}\overline{u_{q}}=\tau u_{p}\tau^{-1}\overline{u_{q}}=u_{p}\overline{u_{q}}.\qedhere
\]
\end{proof}

\begin{prop}[separation of the pure states associated to the same frequency]
\label{prop:separation_pure_states_same_frequency}
Let $\xi\in\Om_n$,
$d=\min\{n,n+\xi\}$,
and $u,v\in\bS_d$
such that $u$ and $v$ are linearly independent.
Then there exists $X$ in $\cX_{n,\al}$ such that
\[
\purestate_{\xi,u}(X)\ne\purestate_{\xi,v}(X).
\]
\end{prop}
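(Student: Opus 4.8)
The plan is to reduce the separation problem to the fully concrete statement already established in the previous section, namely that the matrices $\ga(g_p)_\xi$ generate the full matrix algebra $\Mat_d$ in the fixed-frequency slot. First I would fix $\xi\in\Om_n$, set $d=\min\{n,n+\xi\}$, and recall from Proposition~\ref{prop:algebra_generated_by_gammas_for_a_fixed_positive_frequency} (for $\xi\ge0$) or Proposition~\ref{prop:algebra_generated_by_gammas_for_a_fixed_negative_frequency} (for $\xi<0$) that the C*-algebra generated by finitely many of the matrices $\ga(g_p)_\xi$ coincides with all of $\Mat_d$. Since the generating symbols $g_p$ lie in $\BL$, every $\ga(g_p)$ belongs to $\cG_{n,\al}$, hence to $\cX_{n,\al}$; products, sums, and adjoints of these stay in $\cX_{n,\al}$. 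Thus for every matrix $M\in\Mat_d$ there exists $X\in\cX_{n,\al}$ with $X_\xi=M$.

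With this surjectivity onto the $\xi$-slot in hand, the separation is a purely linear-algebraic matter. Given linearly independent $u,v\in\bS_d$, I would invoke Lemma~\ref{lem:linear_independent_unitary_vectors} to produce indices $p,q\in\{0,\ldots,d-1\}$ with $u_p\overline{u_q}\ne v_p\overline{v_q}$. Choosing $M=\E_{q,p}\in\Mat_d$ and the element $X\in\cX_{n,\al}$ with $X_\xi=M$ furnished by the previous paragraph, one computes
\[
\purestate_{\xi,u}(X)=\langle \E_{q,p}u,u\rangle
=u_p\overline{u_q},
\qquad
\purestate_{\xi,v}(X)=v_p\overline{v_q},
\]
so the two pure states take different values on $X$, which is exactly what is required. (Here I am using that $\langle\E_{q,p}u,u\rangle=\overline{u_q}\,u_p$; the exact choice between $\E_{p,q}$ and $\E_{q,p}$ is a bookkeeping matter fixed by the inner-product convention.)

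The only genuine subtlety, and the step I expect to require the most care, is confirming that $X_\xi$ can be prescribed \emph{independently} of the other components $X_\eta$: the propositions about fixed frequencies give us a matrix $M$ realized in the $\xi$-slot, but we must ensure that forming products and linear combinations of the $\ga(g_p)$ inside $\cX_{n,\al}$ does not constrain the value in slot $\xi$. Since $\cX_{n,\al}\subseteq\cS_n$ and multiplication in $\cS_n$ is coordinatewise, the value $X_\xi$ is computed entirely from the $\xi$-components of the generators, so the algebra generated in slot $\xi$ really is the full $\Mat_d$, regardless of what happens elsewhere. Once this coordinatewise independence is made explicit, the rest is the short computation above, and the proof closes.
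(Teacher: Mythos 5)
Your proposal is correct and follows essentially the same route as the paper: the paper's own proof begins by noting the result is a corollary of Propositions~\ref{prop:algebra_generated_by_gammas_for_a_fixed_positive_frequency} and~\ref{prop:algebra_generated_by_gammas_for_a_fixed_negative_frequency}, and then makes explicit exactly what you describe --- using Lemma~\ref{lem:linear_independent_unitary_vectors} to pick $p,q$ with $u_p\overline{u_q}\ne v_p\overline{v_q}$, applying the polynomial expression from Lemma~\ref{lem:construct_basic_matrices_from_generators} to the whole sequences $\ga(g_k)$ so that the coordinatewise operations in $\cS_n$ force $X_\xi=\E_{p,q}$. Your attention to the coordinatewise-independence point (and the $\E_{p,q}$ versus $\E_{q,p}$ bookkeeping) matches the substance of the paper's explicit construction.
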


\begin{proof}
This is corollary from Propositions~\ref{prop:algebra_generated_by_gammas_for_a_fixed_positive_frequency} and~\ref{prop:algebra_generated_by_gammas_for_a_fixed_negative_frequency}, but we will give a more explicit proof.
For every $k$ in $\bN_0$, we put
$A_k\eqdef\ga(g_k)$.
For every $p$ in $\{0,\ldots,d-1\}$, let
\[
G_p \eqdef (A_{d-1+|\xi|+p})_\xi.
\]
By Lemma~\ref{lem:gamma_xi_associated_to_Q},
$G_p$ is $(d-1+p)$-antidiagonal.
So, the matrices $G_0,\ldots,G_{d-1}$
satisfy the assumptions of Lemma~\ref{lem:construct_basic_matrices_from_generators}, with $d$ instead of $n$.

Using Lemma~\ref{lem:linear_independent_unitary_vectors} we find $p,q$ in $\{0,\ldots,d-1\}$ such that
$u_p \overline{u_q}\ne v_p \overline{v_q}$.
By Lemma~\ref{lem:construct_basic_matrices_from_generators}, there exist coefficients $\nu_{p,j}$ and $\nu_{q,j}$ such that
\[
\E_{p,q}
=
\left(\sum_{j=p}^{d-1}\nu_{p,j}G_j\right)
G_{d-1}^2
\left(\sum_{j=q}^{d-1}\nu_{q,j}G_j\right).
\]
Put
\[
X
\eqdef
\left(\sum_{j=p}^{d-1}
\nu_{p,j}A_{d-1+|\xi|+j}\right)
A_{2d+|\xi|-2}^2
\left(\sum_{j=q}^{d-1}
\nu_{q,j}A_{d-1+|\xi|+j}\right).
\]
Then $X\in\cX_{n,\al}$,
$X_\xi = \E_{p,q}$, and
\[
\purestate_{\xi,u}(X)
=\langle \E_{p,q}u,u\rangle
=u_p \overline{u_q}
\ne
v_p \overline{v_q}
=\langle \E_{p,q}v,v\rangle
=\purestate_{\xi,v}(X).
\qedhere
\]
\end{proof}

\begin{remark}
Instead of the generating symbols~\eqref{eq:gen_symbol_jac},
we could use generating symbols of the form
$g_{p,\xi}(t)\eqdef Q_p^{(\al,\xi)}(t^2)$,
but the notation would be more complicated,
especially in Sections~\ref{sec:separate_pure_states_associated_to_different_frequencies}
and~\ref{sec:separate_pure_states_associated_to_different_frequencies_when_the_lower_frequency_is_negative}.
\end{remark}

\section{Separation of the limit value from the other pure states}
\label{eq:separate_limit_value}

In this section, we show that $\cG_{n,\al}$
separates $\purestate_\infty$ from the other pure states of $\cL_n$.
The proof is very similar to a reasoning in~\cite[Lemma~4.3]{RamirezSanchez2015}.

\begin{lem}
\label{lem:purestate_via_integral}
Let $\xi\in\Om_n$,
$d=\min\{n+\xi,n\}$,
$u\in\bS_d$,
$a\in\BL$,
and $A\eqdef\ga(a)$.
Then
\begin{equation}
\label{eq:purestate_via_integral}
\purestate_{\xi,u}(A)
=
\int_0^1 a(\sqrt{t})\,|F_{\al,\xi,u}(t)|^2\,\dif{}t,
\end{equation}
where
\begin{equation}
\label{eq:F_from_purestate_via_integral}
F_{\al,\xi,u}(t)\eqdef
\sum_{k=0}^{d-1} u_k
\jac_k^{(\al,|\xi|)}(t).
\end{equation}
\end{lem}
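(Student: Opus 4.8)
I want to show that the pure state $\purestate_{\xi,u}$ evaluated on $A=\ga(a)$ equals the weighted integral~\eqref{eq:purestate_via_integral}. The plan is to unwind the definition of $\purestate_{\xi,u}$ as a quadratic form in the matrix $A_\xi$, plug in the integral formula~\eqref{eq:gamma_as_integral_jac} for the components $\be_{a,\al,\xi,j,k}$, and recognize the resulting double sum as the square modulus of the single sum $F_{\al,\xi,u}$. This is a direct computation rather than a deep argument, so the real content is just bookkeeping with the indices.

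Let me write out the plan. By Definition~\ref{defn:purestates}, $\purestate_{\xi,u}(A)=\langle A_\xi u,u\rangle$, and since $A_\xi=\ga(a)_\xi=[\be_{a,\al,\xi,j,k}]_{j,k=0}^{d-1}$ by~\eqref{eq:gamma_def}, expanding the inner product gives
\[
\purestate_{\xi,u}(A)
=\sum_{j=0}^{d-1}\sum_{k=0}^{d-1}
\be_{a,\al,\xi,j,k}\,u_k\,\overline{u_j}.
\]
First I would substitute the integral representation~\eqref{eq:gamma_as_integral_jac} for $\be_{a,\al,\xi,j,k}$ and exchange the (finite) sums with the integral, obtaining
\[
\purestate_{\xi,u}(A)
=\int_0^1 a(\sqrt{t})
\left(\sum_{j=0}^{d-1}\sum_{k=0}^{d-1}
u_k\,\overline{u_j}\,
\jac_j^{(\al,|\xi|)}(t)\,\jac_k^{(\al,|\xi|)}(t)\right)
\dif{}t.
\]

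Next I would factor the inner double sum. Because the Jacobi functions $\jac_k^{(\al,|\xi|)}$ are real-valued, the bracketed expression factors as
\[
\left(\sum_{k=0}^{d-1}u_k\,\jac_k^{(\al,|\xi|)}(t)\right)
\overline{\left(\sum_{j=0}^{d-1}u_j\,\jac_j^{(\al,|\xi|)}(t)\right)}
=\bigl|F_{\al,\xi,u}(t)\bigr|^2,
\]
where $F_{\al,\xi,u}$ is exactly the function defined in~\eqref{eq:F_from_purestate_via_integral}. Substituting this back yields~\eqref{eq:purestate_via_integral}, completing the proof.

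The only point that requires a word of care is the reality of $\jac_k^{(\al,|\xi|)}$: this is what lets the second factor carry the complex conjugate, so that $u_k\overline{u_j}$ pairs correctly into $|F_{\al,\xi,u}|^2$ rather than into an unsquared bilinear form. This reality follows immediately from the definition~\eqref{eq:jac}, since $\jaccoef{\al}{\be}{m}$, the weight $(1-t)^{\al/2}t^{\be/2}$, and the shifted Jacobi polynomial $Q_m^{(\al,\be)}$ are all real for $t\in(0,1)$. There is no genuine obstacle here; the exchange of sum and integral is justified because the sums are finite and $a$ is bounded, so the integrand is integrable. I expect the whole argument to be a short, self-contained calculation.
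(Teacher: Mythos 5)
Your proposal is correct and is essentially identical to the paper's own proof: both expand $\langle\ga(a)_\xi u,u\rangle$ as a double sum in $\be_{a,\al,\xi,j,k}$, substitute the integral representation~\eqref{eq:gamma_as_integral_jac}, exchange the finite sums with the integral, and factor the result into $|F_{\al,\xi,u}|^2$ using the real-valuedness of the functions $\jac_k^{(\al,|\xi|)}$. Your explicit remark on why reality of the Jacobi functions is needed for the factorization is a point the paper leaves implicit, but the argument is the same.
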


\begin{proof}
By the definitions of $\purestate_{\xi,u}$ and $\ga(a)_\xi$,
\begin{align*}
\purestate_{\xi,u}(A)
&=
\langle \ga(a)_\xi u,u\rangle
=
\sum_{j,k=0}^{d-1}
\overline{u_j}\,u_k
\,\be_{a,\al,\xi,j,k}
=
\sum_{j,k=0}^{d-1}
\overline{u_j}\,
u_k\,
\int_0^1 a(t)\,
\jac_{j}^{(\al,|\xi|)}(t)\,
\jac_{k}^{(\al,|\xi|)}(t)\,
\dif{}t
\\
&=
\int_0^1 a(t)
\left(\,\sum_{j=0}^{d-1} \overline{u_j} \jac_j^{(\al,|\xi|)}(t)
\right)
\left(\,\sum_{k=0}^{d-1} u_k \jac_k^{(\al,|\xi|)}(t)
\right)\,\dif{}t.
\end{align*}
The last expression can be written as~\eqref{eq:purestate_via_integral}.
\end{proof}

\begin{prop}
\label{prop:purestate_indicator_function}
Let $\xi\in\Om_n$,
$d=\min\{n,n+\xi\}$,
and $u\in\bS_d$.
Then there exists $A$ in $\cG_{n,\al}$ such that
\[
\purestate_\infty(A)\ne\purestate_{\xi,u}(A).
\]
\end{prop}

\begin{proof}
Let $s\in(0,1)$, e.g., $s=1/2$.
Consider $A\eqdef\ga_{n,\al}(a)$,
where $a$ is the indicator function of $[0,s)$, i.e.,
$a\eqdef 1_{[0,s)}$.
By Lemma~\ref{lem:purestate_via_integral},
\[
\purestate_{\xi,u}(A)
=\int_0^{s^2}
|F_{\al,\xi,u}(t)|^2\,\dif{}t,
\]
where $F_{\al,\xi,u}$ is defined by~\eqref{eq:F_from_purestate_via_integral}.
Functions $\jac^{(\al,|\xi|)}_j$
with different $j$
are linearly independent.
Therefore, $|F_{\al,\xi,u}|^2$
is strictly positive except
for a finite subset of $[0,1)$,
and $\purestate_{\xi,u}(A)>0$.

On the other hand, $1_{[0,s)}$ vanishes near the point $1$.
Hence, by Proposition~\ref{prop:a_has_limit_implies_gamma_has_limit},
\[
\lim_{\xi\to\infty} \ga(A)_\xi = 0_{n\times n}.
\]
Therefore, $\purestate_\infty(A)=0$.
\end{proof}

\section{Separation of pure states associated to different positive frequencies}
\label{sec:separate_pure_states_associated_to_different_frequencies}

We recall that $g_p$ is introduced in Definition~\ref{def:g_p}.

\begin{lem}
\label{lem:when_gamma_g_is_zero}
Let $p\in\bNz$ and $\xi\in\Om_n$
such that 
$p-|\xi| > 2d-2$,
where
$d=\min\{n+\xi,n\}$.
Then $\ga(g_p)_\xi$ is the zero matrix in $\Mat_d$.
\end{lem}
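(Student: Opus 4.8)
The plan is to use the antitriangular structure established in Lemma~\ref{lem:gamma_xi_associated_to_Q} together with the vanishing criterion from Remark~\ref{rem:antitriangular_is_zero}. By Lemma~\ref{lem:gamma_xi_associated_to_Q}, the matrix $\ga(g_p)_\xi$ is $(p-|\xi|)$-antitriangular. This matrix lives in $\Mat_d$, where $d=\min\{n+\xi,n\}$, so its indices $j,k$ range over $\{0,\ldots,d-1\}$. Thus the largest possible value of $j+k$ is $2d-2$.

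First I would recall the definition of $p$-antitriangularity: for a matrix $A\in\Mat_d$ that is $(p-|\xi|)$-antitriangular, every entry $A_{j,k}$ with $j+k<p-|\xi|$ is forced to be zero. The key observation is that under the hypothesis $p-|\xi|>2d-2$, we have $j+k\le 2d-2<p-|\xi|$ for \emph{every} admissible pair $(j,k)$ in $\{0,\ldots,d-1\}^2$. Hence every entry of $\ga(g_p)_\xi$ falls into the forced-zero regime, so the entire matrix vanishes. This is exactly the reasoning already recorded in Remark~\ref{rem:antitriangular_is_zero}, transposed from the threshold $2n-2$ there to $2d-2$ here.

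Concretely, I would invoke Lemma~\ref{lem:gamma_xi_associated_to_Q} to assert that $\ga(g_p)_\xi$ is $(p-|\xi|)$-antitriangular, then cite Remark~\ref{rem:antitriangular_is_zero} (applied with $d$ in place of $n$) to conclude directly that $\ga(g_p)_\xi$ is the zero matrix, since $p-|\xi|>2d-2$. Alternatively, if one prefers not to re-derive the remark for a general order $d$, I would spell out the one-line argument: for any $j,k\in\{0,\ldots,d-1\}$ one has $j+k\le 2(d-1)=2d-2<p-|\xi|$, so $\ga(g_p)_{\xi,j,k}=0$ by the defining property of antitriangularity.

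There is essentially no obstacle here: the statement is a direct numerical consequence of the antitriangular structure once Lemma~\ref{lem:gamma_xi_associated_to_Q} is in hand. The only point requiring a moment's care is that $\ga(g_p)_\xi$ is a $d\times d$ matrix rather than an $n\times n$ matrix when $\xi<0$, so the correct index bound is $2d-2$ and not $2n-2$; but since $d=\min\{n+\xi,n\}\le n$, this distinction only tightens the threshold and causes no difficulty.
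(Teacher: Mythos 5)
Your proof is correct and follows essentially the same route as the paper: invoke Lemma~\ref{lem:gamma_xi_associated_to_Q} for the $(p-|\xi|)$-antitriangular structure, then apply Remark~\ref{rem:antitriangular_is_zero} (with $d$ in place of $n$) to conclude the matrix vanishes. Your extra remark that the threshold is $2d-2$ rather than $2n-2$ when $\xi<0$ is exactly the point the paper handles implicitly by working in $\Mat_d$.
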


\begin{proof}
By Lemma~\ref{lem:gamma_xi_associated_to_Q}, $\ga(g_p)_\xi$ is $(p-|\xi|)$-antitriangular.
Moreover,
$p-|\xi|>2d-2$.
By Remark~\ref{rem:antitriangular_is_zero}, $\ga(g_p)_\xi$ is the zero matrix.
\end{proof}

Before proving the general result (Proposition~\ref{prop:separate_pure_states_associated_to_different_positive_frequencies}),
consider a simple particular case.

\begin{example}
Let $n=2$, $\xi\in\bNz$, $\eta=\xi+1$.
Consider the matrix sequences
$A_p\eqdef\ga(g_p)$
for $p=\eta+1$ and $p=\eta+2$.
Due to Lemmas~\ref{lem:gamma_xi_associated_to_Q}
and~\ref{lem:when_gamma_g_is_zero},
the elements $\xi$ and $\eta$
of these matrix sequences
have the following structure:
\[
(A_{\eta+1})_\xi
=
\begin{bmatrix}
0 & 0 \\
0 & \bullet
\end{bmatrix},
\qquad
(A_{\eta+1})_\eta
=
\begin{bmatrix}
0 & \bullet \\
\bullet & \ast
\end{bmatrix},
\]
\[
(A_{\eta+2})_\xi
=
\begin{bmatrix}
0 & 0 \\
0 & 0
\end{bmatrix},
\qquad
(A_{\eta+2})_\eta
=
\begin{bmatrix}
0 & 0 \\
0 & \bullet
\end{bmatrix}.
\]
For example, if $\xi=1$, then
we pay attention to the elements with indices $\xi=1$ and $\eta=2$ in the sequences $A_3$ and $A_4$:
\begin{align*}
A_3=
\Biggl(
\begin{bmatrix}
0
\end{bmatrix},
\quad
\begin{bmatrix}
0 & 0 \\
0 & 0
\end{bmatrix},
\quad
\underbrace{
\begin{bmatrix}
0 & 0 \\
0 & \bullet
\end{bmatrix}}_{(A_3)_1},
\quad
\underbrace{
\begin{bmatrix}
0 & \bullet \\
\bullet & \ast
\end{bmatrix}}_{(A_3)_2},
\quad
\begin{bmatrix}
\bullet & \ast \\
\ast & \ast
\end{bmatrix},
\quad \ldots
\Biggr),
\\[1ex]
A_4=
\Biggl(
\begin{bmatrix}
0
\end{bmatrix},
\quad
\begin{bmatrix}
0 & 0 \\
0 & 0
\end{bmatrix},
\quad
\underbrace{
\begin{bmatrix}
0 & 0 \\
0 & 0
\end{bmatrix}}_{(A_4)_1},
\quad
\underbrace{
\begin{bmatrix}
0 & 0 \\
0 & \bullet
\end{bmatrix}}_{(A_4)_2},
\quad
\begin{bmatrix}
0 & \bullet \\
\bullet & \ast
\end{bmatrix},
\quad \ldots
\Biggr).
\end{align*}
In general, if $\xi\in\bNz$ and $\eta=\xi+1$,
then we work with the matrix sequences $A_{\eta+1}$ and $A_{\eta+2}$.
The idea is to apply some algebraic operations to these matrix sequences
and obtain totally different matrices in the positions $\xi$ and $\eta$.
Let
\[
G_0 \eqdef (A_{\eta+1})_\eta,\qquad
G_1 \eqdef (A_{\eta+2})_\eta.
\]
Notice that $G_0$ is $1$-antitriangular and $G_1$ is $2$-antitriangular.
Therefore,
$G_0$ and $G_1$ satisfy conditions of Lemma~\ref{lem:construct_basic_matrices_from_generators}.
By this lemma,
$\E_{0,0}$ and $\E_{1,1}$ can be written in terms of $G_0$ and $G_1$ as follows (with some coefficients $\nu_{0,0}$, $\nu_{0,1}$, $\nu_{1,1}$):
\begin{align}
\label{eq:E00_from_Aeta_example2}
\E_{0,0}
&=(\nu_{0,0}\,(A_{\eta+1})_\eta
+ \nu_{0,1}\,(A_{\eta+2})_\eta)\,
(A_{\eta+2})_{\eta}^2\,
(\nu_{0,0}\,(A_{\eta+1})_\eta
+ \nu_{0,1}\,(A_{\eta+2})_{\eta}),
\\[1ex]
\label{eq:E11_from_Aeta_example2}
\E_{1,1}
&=\nu_{1,1}\,(A_{\eta+2})_{\eta}^2.
\end{align}
Consider the whole matrix sequences constructed from $A_{\eta+1}$ and $A_{\eta+2}$ by the same rules:
\begin{align*}
X
&\eqdef(\nu_{0,0}\,A_{\eta+1}
+ \nu_{0,1}\,A_{\eta+2})\,
A_{\eta+2}^2\,
(\nu_{0,0}\,A_{\eta+1}
+ \nu_{0,1}\,A_{\eta+2}),
\\[1ex]
Y
&\eqdef \nu_{1,1}\,A_{\eta+2}^2.
\end{align*}
Then, by~\eqref{eq:E00_from_Aeta_example2}
and~\eqref{eq:E11_from_Aeta_example2},
the $\eta$th elements of these matrix sequences are basic diagonal matrices:
\[
X_\eta=\E_{0,0},\qquad
Y_\eta=\E_{1,1}.
\]
On the other hand, since $(A_{\eta+2})_\xi$ is the zero matrix,
the $\xi$th elements of these matrix sequences are zero:
\[
X_\xi=0_{2\times 2},\qquad
Y_\xi=0_{2\times 2}.
\]
Now it is easy to see that the sequences $X$ and $Y$ are sufficient to separate any pair of pure states of the form
$\purestate_{\xi,u}$ and $\purestate_{\eta,v}$,
where $u,v\in\bS_2$.
The condition $\|v\|=1$ implies that $v_0\ne0$ or $v_1\ne0$.

\medskip\noindent
Case 1. Let $v_0\ne0$.
Then
\[
\purestate_{\xi,u}(X)
=\langle X_\xi u,u\rangle = 0,\qquad
\purestate_{\eta,v}(X)
=\langle X_\eta v,v\rangle
=\langle \E_{0,0} v,v\rangle
=|v_0|^2
>0.
\]

\medskip\noindent
Case 2. Let $v_1\ne 0$.
Then
\[
\purestate_{\xi,u}(Y)
=\langle Y_\xi u,u\rangle = 0,\qquad
\purestate_{\eta,v}(Y)
=\langle Y_\eta v,v\rangle
=\langle \E_{1,1}v,v\rangle
=|v_1|^2
>0.
\]
\end{example}

\begin{prop}[separation of pure states associated to different positive frequencies]
\label{prop:separate_pure_states_associated_to_different_positive_frequencies}
Let $\xi,\eta\in\bNz$ such that $\xi<\eta$,
and let $u,v\in\bS_n$.
Then there exists $X$ in $\cX_{n,\al}$ such that
\[
\purestate_{\xi,u}(X)
\ne\purestate_{\eta,v}(X).
\]
\end{prop}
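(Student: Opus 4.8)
The plan is to reduce the separation to producing a single element of $\cX_{n,\al}$ that vanishes at the lower frequency $\xi$ but is a diagonal matrix unit at $\eta$. Since $\|v\|=1$, there is an index $m$ in $\{0,\ldots,n-1\}$ with $v_m\ne0$; it therefore suffices to exhibit $X$ in $\cX_{n,\al}$ with $X_\xi=0$ and $X_\eta=\E_{m,m}$, because then $\purestate_{\xi,u}(X)=\langle X_\xi u,u\rangle=0$ while $\purestate_{\eta,v}(X)=\langle\E_{m,m}v,v\rangle=|v_m|^2>0$.

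To build such an $X$, I would put $A_p\eqdef\ga(g_p)$ and take the generators associated to the frequency $\eta$ as in Proposition~\ref{prop:algebra_generated_by_gammas_for_a_fixed_positive_frequency}, namely $G_p\eqdef(A_{n-1+\eta+p})_\eta$ for $p$ in $\{0,\ldots,n-1\}$. By Lemma~\ref{lem:gamma_xi_associated_to_Q} each $G_p$ is $(n-1+p)$-antitriangular, so by Remark~\ref{rem:antitriangular_serve_as_generators} these matrices satisfy the hypotheses of Lemma~\ref{lem:construct_basic_matrices_from_generators}. As the $A_p$ are real and symmetric, Remark~\ref{rem:construct_basic_matrices_from_real_symmetric_generators} furnishes real coefficients $\nu_{m,m},\ldots,\nu_{m,n-1}$ with
\[
\E_{m,m}
=\left(\sum_{j=m}^{n-1}\nu_{m,j}G_j\right)
G_{n-1}^2
\left(\sum_{j=m}^{n-1}\nu_{m,j}G_j\right).
\]
I would then lift this identity to whole matrix sequences by setting
\[
X\eqdef
\left(\sum_{j=m}^{n-1}\nu_{m,j}A_{n-1+\eta+j}\right)
A_{2n-2+\eta}^2
\left(\sum_{j=m}^{n-1}\nu_{m,j}A_{n-1+\eta+j}\right),
\]
which lies in $\cX_{n,\al}$ and satisfies $X_\eta=\E_{m,m}$ by the displayed formula, since the evaluation $\cS_n\to\Mat_n$ at coordinate $\eta$ is a $*$-homomorphism sending $A_{n-1+\eta+j}$ to $G_j$.

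The decisive point, and the step I expect to be the main obstacle, is showing $X_\xi=0$. When $\eta-\xi$ is small one cannot hope to make each generator $G_j$ vanish at $\xi$; indeed, with $\xi=\eta-1$ the only matrices surviving at $\eta$ among those that vanish at $\xi$ are multiples of $\E_{n-1,n-1}$, which would fail to separate $\purestate_{\eta,v}$ when $v_{n-1}=0$. The resolution is that the generating scheme of Lemma~\ref{lem:construct_basic_matrices_from_generators} always sandwiches the square of the most antitriangular generator $A_{2n-2+\eta}$ in the center, and this central factor vanishes at $\xi$ regardless of how close $\eta$ and $\xi$ are: by Lemma~\ref{lem:gamma_xi_associated_to_Q} the matrix $(A_{2n-2+\eta})_\xi$ is $(2n-2+\eta-\xi)$-antitriangular, and since $\eta>\xi$ its antitriangularity index exceeds $2n-2$, so Remark~\ref{rem:antitriangular_is_zero} forces $(A_{2n-2+\eta})_\xi=0$. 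Hence $A_{2n-2+\eta}^2$ is zero at $\xi$, so $X_\xi=0$, and choosing $m$ with $v_m\ne0$ completes the separation.
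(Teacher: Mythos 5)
Your proposal is correct and follows essentially the same route as the paper's own proof: the same generators $G_j=(\ga(g_{n-1+\eta+j}))_\eta$, the same lifted sandwich product defining $X$ via Lemma~\ref{lem:construct_basic_matrices_from_generators} and Remark~\ref{rem:construct_basic_matrices_from_real_symmetric_generators}, and the same key observation that the central factor $(\ga(g_{2n-2+\eta}))_\xi$ vanishes because its antitriangularity index $2n-2+\eta-\xi$ exceeds $2n-2$. Your direct appeal to Lemma~\ref{lem:gamma_xi_associated_to_Q} together with Remark~\ref{rem:antitriangular_is_zero} is precisely the content of the paper's Lemma~\ref{lem:when_gamma_g_is_zero}, so there is no substantive difference.
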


\begin{proof}
For every $k$ in $\bNz$, we put $A_k \eqdef \ga(g_k)$.
Furthermore, for each $q$ in $\{0,\ldots,n-1\}$,
we use the following notation:
\[
G_q \eqdef
(A_{n-1+\eta+q})_\eta.
\]
By Lemma~\ref{lem:gamma_xi_associated_to_Q},
$G_q$ is $(n-1+q)$-antitriangular.
Hence, by Remark~\ref{rem:antitriangular_serve_as_generators},
$G_0,\ldots,G_{n-1}$ satisfy conditions of Lemma~\ref{lem:construct_basic_matrices_from_generators}.
Moreover, these matrices are real and symmetric.

Let $p\in\{0,\ldots,n-1\}$ such that $v_p\ne0$.
By Lemma~\ref{lem:construct_basic_matrices_from_generators}
and Remark~\ref{rem:construct_basic_matrices_from_real_symmetric_generators},
there exist $\nu_{p,p},\ldots,\nu_{p,n-1}$ in $\bR$ such that
\[
\E_{p,p}
=
\left(\,\sum_{j=p}^{n-1} \nu_{p,j}G_j\right)G_{n-1}^2
\left(\,\sum_{j=p}^{n-1} \nu_{p,j}G_j\right),
\]
i.e.,
\begin{equation}
\label{eq:Epp_via_Aeta}
\E_{p,p}
=
\left(\,
\sum_{j=p}^{n-1} \nu_{p,j}
(A_{n-1+\eta+j})_\eta
\right)
(A_{2n-2+\eta})_\eta^2
\left(\,
\sum_{j=p}^{n-1} \nu_{p,j}
(A_{n-1+\eta+j})_\eta
\right).
\end{equation}
We construct $X\in\cX_{n,\al}$ by a similar rule, using the whole sequences $A_{n-1+\eta+q}$ instead of $G_q$:
\begin{equation}
\label{eq:define_X_via_A}
X \eqdef 
\left(\,\sum_{j=p}^{n-1} \nu_{p,j}A_{n-1+\eta+j}\right)
A_{2n-2+\eta}^2
\left(\,\sum_{j=p}^{n-1} \nu_{p,j}A_{n-1+\eta+j}\right).
\end{equation}
Then, by~\eqref{eq:Epp_via_Aeta},
$X_\eta = \E_{p,p}$ and
\[
\purestate_{\eta,v}(X)
=\langle X_\eta v,v\rangle
=\langle \E_{p,p} v, v\rangle
=|v_p|^2 > 0.
\]
On the other hand,
by Lemma~\ref{lem:when_gamma_g_is_zero},
$(A_{2n-2+\eta})_\xi$ is the zero matrix in $\Mat_n$
because
\[
2n-2+\eta-\xi > 2n-2.
\]
From~\eqref{eq:define_X_via_A},
we get 
$X_\xi = 0_{n\times n}$.
Therefore,
$\purestate_{\xi,u}(X)
=\langle X_\xi u,u\rangle
=0$.
\end{proof}

\section{\texorpdfstring{Separation of pure states associated to different $\boldsymbol{\xi}$ and $\boldsymbol{\eta}$,
for $\boldsymbol{\xi<0}$}{Separation of pure states associated to different xi and eta, for xi<0}}
\label{sec:separate_pure_states_associated_to_different_frequencies_when_the_lower_frequency_is_negative}

In this section,
we will separate pure states of the form~$\purestate_{\xi,u}$ and $\purestate_{\eta,v}$,
where $\xi,\eta$ in $\Om_n$, $\xi<\eta$ and $\xi<0$.
There are three possible cases:
\begin{itemize}

\item $-n+1\le \xi<\eta<0$
(Proposition~\ref{prop:separate_pure_states_associated_to_different_negative_frequencies});

\item $-\eta\leq \xi<0$ (Proposition~\ref{prop:separate_pure_states_associated_to_different_frequences_xi_eq_minus_eta});

\item $1\le\eta\le n-2$,
$-n+1\le\xi<-\eta$
(Proposition~\ref{prop:separate_pure_states_associated_to_different_frequences_xi_less_minus_eta}).
\end{itemize}

\begin{example}
\label{example:gammas_negative_2}
Let $n=2$.
Consider the matrix sequences
$A_p\eqdef\ga(g_p)$
for $p\in\{0,1,2,3,4\}$.
Due to Lemma~\ref{lem:gamma_xi_associated_to_Q}
and Remark~\ref{rem:gammas_for_negative_frequencies},
the first elements
of these matrix sequences
have the following structure:
\[
A_1=
\bigleftparenthesis
\
\underbrace{
\begin{bmatrix}
\bullet
\end{bmatrix}
}_{(A_1)_{-1}},
\quad 
\underbrace{
\begin{bmatrix}
0 & \bullet \\
\bullet & \ast
\end{bmatrix}
}_{(A_1)_{0}},
\quad
\underbrace{
\begin{bmatrix}
\bullet & \ast \\
\ast & \ast
\end{bmatrix}
}_{(A_1)_1},
\quad
\underbrace{
\begin{bmatrix}
\ast & \ast \\
\ast & \ast
\end{bmatrix}
}_{(A_1)_2},
\quad
\underbrace{
\begin{bmatrix}
\ast & \ast \\
\ast & \ast
\end{bmatrix}
}_{(A_1)_3},\quad
\ldots
\bigrightparenthesis,
\]

\[
A_2=
\bigleftparenthesis
\
\underbrace{
\begin{bmatrix}
0
\end{bmatrix}
}_{(A_2)_{-1}},
\quad 
\underbrace{
\begin{bmatrix}
0 & 0 \\
0 & \bullet
\end{bmatrix}
}_{(A_2)_{0}},
\quad
\underbrace{
\begin{bmatrix}
0 & \bullet \\
\bullet & \ast
\end{bmatrix}
}_{(A_2)_1},
\quad
\underbrace{
\begin{bmatrix}
\bullet & \ast \\
\ast & \ast
\end{bmatrix}
}_{(A_2)_2},
\quad
\underbrace{
\begin{bmatrix}
\ast & \ast \\
\ast & \ast
\end{bmatrix}
}_{(A_2)_3},\quad
\ldots
\bigrightparenthesis,
\]
\[
A_3=
\bigleftparenthesis
\
\underbrace{
\begin{bmatrix}
0
\end{bmatrix}
}_{(A_3)_{-1}},
\quad
\underbrace{
\begin{bmatrix}
0 & 0 \\
0 & 0
\end{bmatrix}
}_{(A_3)_0},
\quad
\underbrace{
\begin{bmatrix}
0 & 0 \\
0 & \bullet
\end{bmatrix}
}_{(A_3)_1},
\quad
\underbrace{
\begin{bmatrix}
0 & \bullet \\
\bullet & \ast
\end{bmatrix}
}_{(A_3)_2},
\quad
\underbrace{
\begin{bmatrix}
\bullet & \ast \\
\ast & \ast
\end{bmatrix}
}_{(A_3)_3},
\quad
\ldots
\bigrightparenthesis,
\]
\[
A_4=
\bigleftparenthesis
\
\underbrace{
\begin{bmatrix}
0
\end{bmatrix}
}_{(A_4)_{-1}},
\quad
\underbrace{
\begin{bmatrix}
0 & 0 \\
0 & 0
\end{bmatrix}
}_{(A_4)_0},
\quad
\underbrace{
\begin{bmatrix}
0 & 0 \\
0 & 0
\end{bmatrix}
}_{(A_4)_1},
\quad
\underbrace{
\begin{bmatrix}
0 & 0 \\
0 & \bullet
\end{bmatrix}
}_{(A_4)_2},
\quad
\underbrace{
\begin{bmatrix}
0 & \bullet \\
\bullet & \ast
\end{bmatrix}
}_{(A_4)_3},
\quad
\ldots
\bigrightparenthesis.
\]
\begin{itemize}
\item Let $\xi=-1$ and $\eta=0$.
Matrices $G_0 \eqdef(A_1)_0$ and $G_1\eqdef (A_2)_0$ satisfy conditions of Lemma~\ref{lem:construct_basic_matrices_from_generators}.
Given $p$ in $\{0,1\}$,
we apply the recipe from Lemma~\ref{lem:construct_basic_matrices_from_generators} to these matrices and obtain the basic matrices $\E_{p,p}$ in $\Mat_2$:
\[
\E_{0,0}
=\bigl(\nu_{0,0} (A_1)_0 + \nu_{0,1} (A_2)_0\bigr)\,
(A_2)_0^2\,
\bigl(\nu_{0,0} (A_1)_0 + \nu_{0,1} (A_2)_0\bigr),
\]
\[
\E_{1,1}
=\bigl(\nu_{1,1} (A_2)_0\bigr)\,
(A_2)_0^2\,
\bigl(\nu_{1,1} (A_2)_0\bigr).
\]
Define the whole matrix sequenceces $X,Y\in\cX_{2,\al}$ by the same rule:
\[
X
\eqdef \bigl(\nu_{0,0} A_1 + \nu_{0,1} A_2\bigr)\,
A_2^2\,
\bigl(\nu_{0,0} A_1 + \nu_{0,1} A_2\bigr),
\qquad
Y
\eqdef
\bigl(\nu_{1,1} A_2\bigr)\,
A_2^2\,
\bigl(\nu_{1,1} A_2\bigr).
\]
Then $X_0=\E_{0,0}$ and $Y_0=\E_{1,1}$.
On the other hand, $(A_2)_{-1}$ is the zero matrix in $\Mat_1$,
therefore $X_{-1}$ and $Y_{-1}$ coincide with the zero matrix in $\Mat_1$.

\item
Let $\xi=-1$ and $\eta=1$. Matrices $G_0 \eqdef(A_2)_1$ and $G_1\eqdef (A_3)_1$ satisfy conditions of Lemma~\ref{lem:construct_basic_matrices_from_generators}.
According to this lemma, we apply some operations on these matrices to get the basic matrices $\E_{0,0}$ and $\E_{1,1}$.
However, applying the same operations to $(A_2)_{-1}$ and $(A_3)_{-1}$ we get the zero matrix in $\Mat_1$.

\item
Let $\xi=-1$ and $\eta=2$.
Now, we consider $G_0 \eqdef(A_4)_2$ and $G_1 \eqdef(A_3)_2$. In terms of these, Lemma~\ref{lem:construct_basic_matrices_from_generators} gives a recipe to write $\E_{0,0}$ and $\E_{1,1}$. Applying the same recipe to $(A_4)_{-1}$ and $(A_3)_{-1}$ we get in both cases the zero matrix in $\Mat_1$.
\end{itemize}
\end{example}

\smallskip

\begin{example}
\label{example:gammas_negative_3}
Let $n=3$.
Consider the matrix sequences
$A_p\eqdef\ga(g_p)$
for $p\in\{2,3,4,5,6\}$.
By Lemma~\ref{lem:gamma_xi_associated_to_Q},
the first elements of these matrix sequences have the following form:
\[
A_2=
\bigleftparenthesis
\
\underbrace{
\begin{bmatrix}
\bullet
\end{bmatrix}
}_{(A_2)_{-2}},
\quad 
\underbrace{
\begin{bmatrix}
0 & \bullet \\
\bullet & \ast
\end{bmatrix}
}_{(A_2)_{-1}},
\quad
\underbrace{
\begin{bmatrix}
0 & 0 & \bullet \\
0 & \bullet & \ast \\
\bullet & \ast & \ast
\end{bmatrix}
}_{(A_2)_0},
\quad
\underbrace{
\begin{bmatrix}
0 & \bullet & \ast \\
\bullet & \ast & \ast \\
\ast & \ast & \ast
\end{bmatrix}
}_{(A_2)_1},
\quad
\underbrace{
\begin{bmatrix}
\bullet & \ast & \ast \\
\ast & \ast & \ast \\
\ast & \ast & \ast
\end{bmatrix}
}_{(A_2)_2},\quad
\ldots
\bigrightparenthesis,
\]
\[
A_3=
\bigleftparenthesis
\
\underbrace{
\begin{bmatrix}
0
\end{bmatrix}
}_{(A_3)_{-2}},
\quad 
\underbrace{
\begin{bmatrix}
0 & 0 \\
0 & \bullet
\end{bmatrix}
}_{(A_3)_{-1}},
\quad
\underbrace{
\begin{bmatrix}
0 & 0 & 0 \\
0 & 0 & \bullet \\
0 & \bullet & \ast
\end{bmatrix}
}_{(A_3)_0},
\quad
\underbrace{
\begin{bmatrix}
0 & 0 & \bullet \\
0 & \bullet & \ast \\
\bullet & \ast & \ast
\end{bmatrix}
}_{(A_3)_1},
\quad
\underbrace{
\begin{bmatrix}
0 & \bullet & \ast \\
\bullet & \ast & \ast \\
\ast & \ast & \ast
\end{bmatrix}
}_{(A_3)_2},\quad
\ldots
\bigrightparenthesis,
\]

\[
A_4=
\bigleftparenthesis
\
\underbrace{
\begin{bmatrix}
0
\end{bmatrix}
}_{(A_4)_{-2}},
\quad 
\underbrace{
\begin{bmatrix}
0 & 0 \\
0 & 0
\end{bmatrix}
}_{(A_4)_{-1}},
\quad
\underbrace{
\begin{bmatrix}
0 & 0 & 0 \\
0 & 0 & 0 \\
0 & 0 & \bullet
\end{bmatrix}
}_{(A_4)_0},
\quad
\underbrace{
\begin{bmatrix}
0 & 0 & 0 \\
0 & 0 & \bullet \\
0 & \bullet & \ast
\end{bmatrix}
}_{(A_4)_1},
\quad
\underbrace{
\begin{bmatrix}
0 & 0 & \bullet \\
0 & \bullet & \ast \\
\bullet & \ast & \ast
\end{bmatrix}
}_{(A_4)_2},\quad 
\ldots
\bigrightparenthesis,
\]

\[
A_5=
\bigleftparenthesis
\
\underbrace{
\begin{bmatrix}
0
\end{bmatrix}
}_{(A_5)_{-2}},
\quad 
\underbrace{
\begin{bmatrix}
0 & 0 \\
0 & 0
\end{bmatrix}
}_{(A_5)_{-1}},
\quad
\underbrace{
\begin{bmatrix}
0 & 0 & 0 \\
0 & 0 & 0 \\
0 & 0 & 0
\end{bmatrix}
}_{(A_5)_0},
\quad
\underbrace{
\begin{bmatrix}
0 & 0 & 0 \\
0 & 0 & 0 \\
0 & 0 & \bullet
\end{bmatrix}
}_{(A_5)_1},
\quad
\underbrace{
\begin{bmatrix}
0 & 0 & 0 \\
0 & 0 & \bullet \\
0 & \bullet & \ast
\end{bmatrix}
}_{(A_5)_2},\quad
\ldots
\bigrightparenthesis,
\]

\[
A_6=
\bigleftparenthesis
\
\underbrace{
\begin{bmatrix}
0
\end{bmatrix}
}_{(A_6)_{-2}},
\quad 
\underbrace{
\begin{bmatrix}
0 & 0 \\
0 & 0
\end{bmatrix}
}_{(A_6)_{-1}},
\quad
\underbrace{
\begin{bmatrix}
0 & 0 & 0 \\
0 & 0 & 0 \\
0 & 0 & 0
\end{bmatrix}
}_{(A_6)_0},
\quad
\underbrace{
\begin{bmatrix}
0 & 0 & 0 \\
0 & 0 & 0 \\
0 & 0 & 0
\end{bmatrix}
}_{(A_6)_1},
\quad
\underbrace{
\begin{bmatrix}
0 & 0 & 0 \\
0 & 0 & 0 \\
0 & 0 & \bullet
\end{bmatrix}
}_{(A_6)_2},\quad
\ldots
\bigrightparenthesis.
\]

\begin{itemize}
\item For $\xi=-2$, $\eta=-1$,
we put $G_0\eqdef (A_2)_{-1}$,
$G_1\eqdef (A_3)_{-1}$.
Then $G_0,G_1$ satisfy conditions
of Lemma~\ref{lem:construct_basic_matrices_from_generators}.
By the rules from that lemma,
we can apply some algebraic operations to $G_0$ and $G_1$ and obtain matrices $\E_{0,0}$ and $\E_{1,1}$ in $\Mat_2$.
The same operations,
applied to $(A_2)_{-2}$ and $(A_3)_{-2}$,
yield the zero matrix in $\Mat_1$.

\item For $\xi=-1$, $\eta=1$,
the matrices
$(A_3)_1$, $(A_4)_1$, $(A_5)_1$
satisfy conditions of Lemma~\ref{lem:construct_basic_matrices_from_generators}.
By the recipe of Lemma~\ref{lem:construct_basic_matrices_from_generators},
we can obtain every basic matrix in $\Mat_3$
from $(A_3)_1$, $(A_4)_1$, $(A_5)_1$.
On the other hand, applying the same operations to
$(A_2)_{-1}$, $(A_3)_{-1}$, $(A_4)_{-1}$,
we get just the zero matrix in $\Mat_2$.

\medskip
\item For $\xi=-1$, $\eta=2$,
the matrices $(A_4)_2$, $(A_5)_3$, $(A_6)_2$
satisfy conditions of Lemma~\ref{lem:construct_basic_matrices_from_generators}.
By the scheme proposed there, one can obtain from such matrices every basic matrix in $\Mat_3$.
On the other hand, the same operations
applied to $(A_4)_{-1}$, $(A_5)_{-1}$, $(A_6)_{-1}$
yield only the zero matrix in $\Mat_2$.

\medskip
\item For $\xi=-2$, $\eta=1$,
matrices $G_0 \eqdef(A_3)_1$, $G_1\eqdef (A_4)_1$, and $G_3\eqdef (A_5)_1$ satisfy conditions of Lemma~\ref{lem:construct_basic_matrices_from_generators}.
From these, we get all diagonal basic matrices in $\Mat_3$.
Applying the same operations to $(A_3)_{-2}$, $(A_4)_{-2}$, and $(A_5)_{-2}$ we obtain the zero matrix in $\Mat_1$.

\end{itemize}
\end{example}

Examples~\ref{example:gammas_negative_2} and~\ref{example:gammas_negative_3} are useful to understand the forthcoming general proofs.

\begin{prop}[separation of pure states associated to different negative frequencies]
\label{prop:separate_pure_states_associated_to_different_negative_frequencies}
Let $\xi,\eta\in\{-n+1,\ldots,-1\}$
such that $\xi<\eta$.
Let $u\in\bS_{n+\xi}$
and $v\in\bS_{n+\eta}$.
Then there exists $X$ in $\cX_{n,\al}$ such that
\[
\purestate_{\xi,u}(X)
\ne\purestate_{\eta,v}(X).
\]
\end{prop}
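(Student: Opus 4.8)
The plan is to mirror the proof of Proposition~\ref{prop:separate_pure_states_associated_to_different_positive_frequencies}, working with the matrix order $n+\eta$ of the block sitting at the frequency $\eta$ in place of $n$, and exploiting that for a lower \emph{negative} frequency $\xi$ the relevant top generator vanishes. First I would put $A_k\eqdef\ga(g_k)$ for $k$ in $\bNz$ and, for each $q$ in $\{0,\ldots,n+\eta-1\}$, set $G_q\eqdef(A_{n-1+q})_\eta$. By Lemma~\ref{lem:gamma_xi_associated_to_Q} the matrix $G_q$ is $((n-1+q)-|\eta|)$-antitriangular, that is $(n+\eta-1+q)$-antitriangular since $|\eta|=-\eta$; hence, by Remark~\ref{rem:antitriangular_serve_as_generators} applied with $n+\eta$ instead of $n$, the real symmetric matrices $G_0,\ldots,G_{n+\eta-1}$ meet the hypotheses of Lemma~\ref{lem:construct_basic_matrices_from_generators}.

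Next, choosing $p$ with $v_p\ne0$ (which exists because $\|v\|=1$), I would invoke Lemma~\ref{lem:construct_basic_matrices_from_generators} together with Remark~\ref{rem:construct_basic_matrices_from_real_symmetric_generators} to obtain real coefficients $\nu_{p,p},\ldots,\nu_{p,n+\eta-1}$ with
\[
\E_{p,p}
=\left(\sum_{j=p}^{n+\eta-1}\nu_{p,j}G_j\right)
G_{n+\eta-1}^{2}
\left(\sum_{j=p}^{n+\eta-1}\nu_{p,j}G_j\right),
\]
and then lift the same algebraic recipe to the full sequences, defining
\[
X\eqdef
\left(\sum_{j=p}^{n+\eta-1}\nu_{p,j}A_{n-1+j}\right)
A_{2n-2+\eta}^{2}
\left(\sum_{j=p}^{n+\eta-1}\nu_{p,j}A_{n-1+j}\right)
\in\cX_{n,\al}.
\]
Since $G_{n+\eta-1}=(A_{2n-2+\eta})_\eta$, evaluating $X$ at $\eta$ reproduces the formula above, so $X_\eta=\E_{p,p}$ and $\purestate_{\eta,v}(X)=\langle\E_{p,p}v,v\rangle=|v_p|^2>0$.

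The decisive step, and the only one needing the hypothesis $\xi<\eta$, is to show $X_\xi=0$. The middle factor of $X$ is $A_{2n-2+\eta}^{2}=\ga(g_{2n-2+\eta})^{2}$, whose $\xi$th entry is $\ga(g_{2n-2+\eta})_\xi^{2}$. With $d=\min\{n+\xi,n\}=n+\xi$, the antitriangularity index satisfies
\[
(2n-2+\eta)-|\xi|
=2n-2+\eta+\xi
>2n-2+2\xi
=2d-2,
\]
the strict inequality being exactly $\eta>\xi$; so Lemma~\ref{lem:when_gamma_g_is_zero} gives $\ga(g_{2n-2+\eta})_\xi=0$, hence $X_\xi=0_{(n+\xi)\times(n+\xi)}$ and $\purestate_{\xi,u}(X)=\langle X_\xi u,u\rangle=0$ for every $u$. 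Comparing the two values yields $\purestate_{\xi,u}(X)=0\ne|v_p|^2=\purestate_{\eta,v}(X)$, which proves the claim. I expect the main obstacle to be purely bookkeeping: matching the antitriangularity indices so that the generators at $\eta$ still span $\Mat_{n+\eta}$ while the squared top generator $A_{2n-2+\eta}$ is annihilated at the lower frequency $\xi$; once the index count above is in place, the rest is the machinery of Section~\ref{sec:one_generating_set_of_matrices}.
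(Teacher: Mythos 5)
Your proof is correct and is essentially identical to the paper's own argument: the same generators $G_q=(A_{n-1+q})_\eta$ spanning $\Mat_{n+\eta}$ via Lemma~\ref{lem:construct_basic_matrices_from_generators}, the same lifted element $X$ built from $A_{n-1+j}$ and $A_{2n-2+\eta}$, and the same index count $2n-2+\eta+\xi>2(n+\xi)-2\iff\xi<\eta$ invoking Lemma~\ref{lem:when_gamma_g_is_zero} to force $X_\xi=0$.
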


\begin{proof}
The assumption $\xi<\eta<0$
implies that $|\eta|<|\xi|$.
For every $k$ in $\bNz$, we put
\[
A_k \eqdef \ga(g_k).
\]
For each $q$ in $\{0,\ldots,n-1+\eta\}$,
consider the matrix
\[
G_q \eqdef (A_{n-1+q})_\eta.
\]
These matrices belong to $\Mat_{n+\eta}$.
According to Lemma~\ref{lem:gamma_xi_associated_to_Q},
$G_q$ is $(n+\eta-1+q)$-antitriangular.
By Remark~\ref{rem:antitriangular_serve_as_generators},
$G_0,\ldots,G_{n+\eta-1}$
satisfy conditions of Lemma~\ref{lem:construct_basic_matrices_from_generators},
with $n+\eta$ instead of $n$.

Let $p\in\{0,\ldots,n+\eta-1\}$ such that $v_p\ne 0$.
Then, by Lemma~\ref{lem:construct_basic_matrices_from_generators} and Remark~\ref{rem:construct_basic_matrices_from_real_symmetric_generators},
there exist $\nu_{p,p},\ldots,\nu_{p,n+\eta-1}$ in $\bR$ such that
\begin{equation}
\label{eq:Epp_via_Aeta_with_eta_negative}
\E_{p,p}
=
\left(\,
\sum_{j=p}^{n+\eta-1}
\nu_{p,j} (A_{n-1+j})_\eta
\right)
(A_{2n-2+\eta})_\eta^2
\left(\,
\sum_{j=p}^{n+\eta-1} \nu_{p,j}
(A_{n-1+j})_\eta
\right).
\end{equation}
We construct $X\in\cX_{n,\al}$ applying the same operations to the whole sequences $A_{n-1+j}$:
\begin{equation}
\label{eq:define_X_via_A_eta_negative}
X \eqdef 
\left(\,\sum_{j=p}^{n+\eta-1}
\nu_{p,j} A_{n-1+j}\right)
A_{2n-2+\eta}^2
\left(\,\sum_{j=p}^{n+\eta-1}
\nu_{p,j} A_{n-1+j}\right).
\end{equation}
Then, by~\eqref{eq:Epp_via_Aeta_with_eta_negative},
$X_\eta = \E_{p,p}$ and
\[
\purestate_{\eta,v}(X)
=\langle X_\eta v,v\rangle
=\langle \E_{p,p} v, v\rangle
=|v_p|^2 > 0.
\]
On the other hand,
by Lemma~\ref{lem:when_gamma_g_is_zero},
$(A_{2n+\eta-2})_\xi$ is the zero matrix in $\Mat_{n+\xi}$ because
\[
2(n+\xi)-2
<
2n+\eta-2-|\xi|.
\]
The formula for $X_\xi$ contains 
$(A_{2n+\eta-2})_\xi$ as a factor.
Therefore, $X_\xi$ is the zero matrix and $\sigma_{\xi,u}(X)=0$.
\end{proof}

\smallskip

\begin{prop}[separation of pure states associated to $\xi$ and $\eta$ with $-\eta\leq\xi<0$]
\label{prop:separate_pure_states_associated_to_different_frequences_xi_eq_minus_eta}
Let $\xi,\eta \in \Om_n$ such that $-\eta\leq\xi<0$.
Let $u\in\bS_{n+\xi}$,
$v\in\bS_{n}$.
Then there exists $X$ in $\cX_{n,\al}$ such that
\[
\purestate_{\xi,u}(X)
\ne\purestate_{\eta,v}(X).
\]
\end{prop}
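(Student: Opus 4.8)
The plan is to mirror the proof of Proposition~\ref{prop:separate_pure_states_associated_to_different_positive_frequencies} almost verbatim, producing an $X\in\cX_{n,\al}$ whose $\eta$-component is a basic diagonal matrix and whose $\xi$-component vanishes. First I would unpack the hypothesis $-\eta\le\xi<0$: since $\xi<0$ we have $\xi\in\{-n+1,\ldots,-1\}$, and $-\eta\le\xi$ gives $\eta\ge|\xi|\ge 1$, so $\eta$ is a positive frequency, the relevant dimension at $\eta$ is $n$, and $v\in\bS_n$ is consistent with this. Thus at the frequency $\eta$ we are in exactly the situation already handled at a positive frequency, while the negative frequency $\xi$ only enters through a vanishing statement.

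Writing $A_k\eqdef\ga(g_k)$ and, for $q\in\{0,\ldots,n-1\}$, $G_q\eqdef(A_{n-1+\eta+q})_\eta$, Lemma~\ref{lem:gamma_xi_associated_to_Q} shows (using $|\eta|=\eta$) that $G_q$ is $(n-1+q)$-antitriangular, real, and symmetric. By Remark~\ref{rem:antitriangular_serve_as_generators} these matrices satisfy the hypotheses of Lemma~\ref{lem:construct_basic_matrices_from_generators}. Choosing $p\in\{0,\ldots,n-1\}$ with $v_p\ne 0$ and invoking Lemma~\ref{lem:construct_basic_matrices_from_generators} together with Remark~\ref{rem:construct_basic_matrices_from_real_symmetric_generators}, I obtain real coefficients $\nu_{p,p},\ldots,\nu_{p,n-1}$ expressing $\E_{p,p}$ as a word in the $G_j$ whose central factor is $G_{n-1}^2=(A_{2n-2+\eta})_\eta^2$. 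Lifting this word verbatim to the full sequences yields
\[
X\eqdef\left(\sum_{j=p}^{n-1}\nu_{p,j}A_{n-1+\eta+j}\right)A_{2n-2+\eta}^2\left(\sum_{j=p}^{n-1}\nu_{p,j}A_{n-1+\eta+j}\right)\in\cX_{n,\al},
\]
for which $X_\eta=\E_{p,p}$, and hence $\purestate_{\eta,v}(X)=\langle\E_{p,p}v,v\rangle=|v_p|^2>0$.

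It remains to show $X_\xi=0$. The key observation is that it suffices for the single central factor $(A_{2n-2+\eta})_\xi$ to vanish, since it appears (squared) inside $X_\xi$; the outer factors need not be zero. By Lemma~\ref{lem:when_gamma_g_is_zero}, applied at the negative frequency $\xi$ where the relevant order is $d=n+\xi$, the matrix $(A_{2n-2+\eta})_\xi$ is the zero matrix as soon as
\[
2n-2+\eta-|\xi|>2(n+\xi)-2,
\]
which, using $|\xi|=-\xi$, reduces to $\eta>\xi$; this holds because $\eta\ge 1>0>\xi$. Consequently $X_\xi=0$ and $\purestate_{\xi,u}(X)=0\ne|v_p|^2=\purestate_{\eta,v}(X)$, which is the desired separation for every admissible $u$.

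The only point requiring real care is the index bookkeeping at the negative frequency: the block $\ga(a)_\xi$ now has order $n+\xi<n$, so the vanishing criterion of Lemma~\ref{lem:when_gamma_g_is_zero} must be applied with $d=n+\xi$ rather than $d=n$, and it is precisely the reduction of the resulting inequality to $\eta>\xi$ that makes the central factor collapse. Apart from this, the argument is a transcription of the positive-frequency case, so I do not anticipate any genuine obstacle beyond correctly tracking the reduced dimension; note in particular that only $\eta\ge 1$ and $\eta>\xi$ are actually used. As a sanity check I would verify, as in the remark following Lemma~\ref{lem:construct_basic_matrices_from_generators}, that the lifted word indeed reproduces $\E_{p,p}$ in the $\eta$-slot.
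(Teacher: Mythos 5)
Your proposal is correct and follows essentially the same route as the paper: the same generators $G_q=(A_{n-1+\eta+q})_\eta$, the same lift of the word from Lemma~\ref{lem:construct_basic_matrices_from_generators} to the full sequences (i.e.\ the construction~\eqref{eq:define_X_via_A}), and the same application of Lemma~\ref{lem:when_gamma_g_is_zero} with $d=n+\xi$, where the inequality $2n-2+\eta-|\xi|>2(n+\xi)-2$ reduces to $\eta>\xi$. Your bookkeeping at the negative frequency matches the paper's argument exactly, so there is nothing to add.
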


\begin{proof}
For every $k$ in $\bNz$, set $A_k \eqdef \ga(g_k)$.
By Lemma~\ref{lem:gamma_xi_associated_to_Q}, for each $q$ in $\{0,\dots, n-1\}$,
\[
G_{q}\eqdef (A_{n-1+\eta+q})_\eta
\]
is $(n-1+q)$-antitriangular. Thus,
$G_0,\ldots,G_{n-1}$
satisfy the requirements of Lemma~\ref{lem:construct_basic_matrices_from_generators}.
Let $p\in \{0,\ldots,n-1\}$ such that $v_p\ne 0$. By the same rule as in the proof of Proposition~\ref{prop:separate_pure_states_associated_to_different_positive_frequencies}, we can construct $X\in\cX_{n,\al}$ such that
$X_{\eta}=\E_{p,p}$ and therefore $\sigma_{\eta,v}(X)\ne0$.
Let us show that $\purestate_{\xi,u}(X)=0$.
By Lema~\ref{lem:when_gamma_g_is_zero},
$(A_{2n-2+\eta})_{\xi}$ is the zero matrix in $\Mat_{n+\xi}$ because
\[
2(n+\xi)-2
<2n-2+\eta+\xi
=2n-2+\eta-|\xi|.
\]
Since $A_{2n-2+\eta}$ appears as a factor in the expression~\eqref{eq:define_X_via_A} for $X$,
we conclude that $X_{\xi}$ is the zero matrix in $\Mat_{n+\xi}$ and $\purestate_{\xi,u}(X)=0$.
\end{proof}

\begin{prop}[separation of pure states associated to $\xi$ and $\eta$ with $\eta\ge0$ and $\xi<-\eta$]
\label{prop:separate_pure_states_associated_to_different_frequences_xi_less_minus_eta}
Let $\xi,\eta\in\Om_n$ such that
\[
0\le\eta\le n-2,\qquad
-n+1\le \xi<-\eta.
\]
Let $u\in\bS_{n+\xi}$ and $v\in\bS_n$.
Then there exists $X$ in $\cX_{n,\al}$ such that
\[
\purestate_{\xi,u}(X)
\ne\purestate_{\eta,v}(X).
\]
\end{prop}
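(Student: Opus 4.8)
The plan is to reuse, essentially verbatim, the construction from the proofs of Propositions~\ref{prop:separate_pure_states_associated_to_different_positive_frequencies} and~\ref{prop:separate_pure_states_associated_to_different_frequences_xi_eq_minus_eta}, since the higher frequency $\eta$ is again nonnegative and therefore carries $n\times n$ matrices. Write $A_k\eqdef\ga(g_k)$ for $k$ in $\bNz$, and for each $q$ in $\{0,\ldots,n-1\}$ set $G_q\eqdef(A_{n-1+\eta+q})_\eta$. Because $\eta\ge0$, Lemma~\ref{lem:gamma_xi_associated_to_Q} shows that $G_q$ is $(n-1+q)$-antitriangular, so by Remark~\ref{rem:antitriangular_serve_as_generators} the matrices $G_0,\ldots,G_{n-1}$ satisfy the hypotheses of Lemma~\ref{lem:construct_basic_matrices_from_generators}; moreover they are real and symmetric (Remark~\ref{rem:gammas_are_symmetric}). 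Choosing $p$ in $\{0,\ldots,n-1\}$ with $v_p\ne0$, I would invoke Lemma~\ref{lem:construct_basic_matrices_from_generators} together with Remark~\ref{rem:construct_basic_matrices_from_real_symmetric_generators} to write $\E_{p,p}=(\sum_{j=p}^{n-1}\nu_{p,j}G_j)\,G_{n-1}^2\,(\sum_{j=p}^{n-1}\nu_{p,j}G_j)$ with real coefficients $\nu_{p,j}$, and then lift this to the whole matrix sequence exactly as in~\eqref{eq:define_X_via_A}, obtaining $X$ in $\cX_{n,\al}$ with $X_\eta=\E_{p,p}$. This immediately yields $\purestate_{\eta,v}(X)=\langle\E_{p,p}v,v\rangle=|v_p|^2>0$.

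It then remains to show $\purestate_{\xi,u}(X)=0$, for which it suffices that $X_\xi$ be the zero matrix in $\Mat_{n+\xi}$. Since the factor $A_{2n-2+\eta}^2$ sits in the middle of the product defining $X$, this follows once I verify that $(A_{2n-2+\eta})_\xi=\ga(g_{2n-2+\eta})_\xi=0$. This is the one step governed by the present hypothesis $\xi<-\eta$: applying Lemma~\ref{lem:when_gamma_g_is_zero} with $p=2n-2+\eta$ and $d=n+\xi$, the required inequality $p-|\xi|>2(n+\xi)-2$ reduces, upon substituting $\xi=-|\xi|$, to $\eta+|\xi|>0$, equivalently $\eta>\xi$, which holds since $\eta\ge0>\xi$. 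Hence $(A_{2n-2+\eta})_\xi$ vanishes, forcing $X_\xi=0$ and $\purestate_{\xi,u}(X)=0$.

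I do not expect a genuine obstacle here: the argument is structurally identical to the two preceding propositions, and this case is split off only because the index bookkeeping in the annihilation step is framed through $|\xi|>\eta$. The single point worth stating carefully is that the $\eta$-block and the $\xi$-block of $X$ may be read off independently; this is legitimate because $\cS_n$ is a direct sum over frequencies, so the products and linear combinations building $X$ act componentwise, and $X_\eta\in\Mat_n$ and $X_\xi\in\Mat_{n+\xi}$ are computed in their own matrix algebras (the submatrix relation of Remark~\ref{rem:gammas_for_negative_frequencies} linking $(A_k)_\xi$ to $(A_k)_{|\xi|}$ is not preserved under matrix multiplication and plays no role). With the two displayed values $|v_p|^2>0$ and $0$ being distinct, the separation is complete.
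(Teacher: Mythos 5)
Your proof is correct and follows essentially the same route as the paper's: the same generators $G_q=(A_{n-1+\eta+q})_\eta$, the same lift via~\eqref{eq:define_X_via_A} giving $X_\eta=\E_{p,p}$ and $\purestate_{\eta,v}(X)=|v_p|^2>0$, and the same application of Lemma~\ref{lem:when_gamma_g_is_zero} to annihilate $X_\xi$ (the paper checks the inequality in the equivalent form $2(n+\xi)-2<2n-2+\eta+\xi$, which matches your reduction to $\eta+|\xi|>0$). Your closing remark that the blocks of $X$ are computed componentwise in the direct sum is a point the paper leaves implicit, but nothing further is needed.
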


\begin{proof}
The assumptions on $\xi$ and $\eta$
imply that $\eta<|\xi|$.
For every $k$ in $\bNz$, we take
$A_k \eqdef \ga(g_k)$ as before.
For each $q$ in $\{0,\ldots,n-1\}$,
consider the matrix
\[
G_q \eqdef (A_{n-1+\eta+q})_\eta\in\Mat_n.
\]
By Lemma~\ref{lem:gamma_xi_associated_to_Q},
$G_q$ is $(n-1+q)$-antitriangular.
Therefore,
$G_0,\ldots,G_{n-1}$
satisfy conditions of Lemma~\ref{lem:construct_basic_matrices_from_generators}.

Let $p\in\{0,\ldots,n-1\}$ such that $v_p\ne 0$.
Similarly to the proof of Proposition~\ref{prop:separate_pure_states_associated_to_different_positive_frequencies},
we construct $X\in\cX_{n,\al}$ by the rule~\eqref{eq:define_X_via_A}. Hence,
$X_\eta=\E_{p,p}$ and therefore $\sigma_{\eta,v}(X)\ne0$.

On the other hand,
by Lemma~\ref{lem:when_gamma_g_is_zero}, $(A_{2n-2+\eta})_\xi$ is the zero matrix in $\Mat_{n+\xi}$ because
\[
2(n+\xi) - 2
< 2n - 2 + \eta + \xi
= 2n - 2 + \eta - |\xi|.
\]
Therefore, $X_\xi$ is the zero matrix and $\sigma_{\xi,u}(X)=0$.
\end{proof}

\section{Proofs of the main results}
\label{sec:proofs_of_main_results}

In this section,
we prove Theorems~\ref{thm:main} and~\ref{thm:closure_is_not_Cstar_algebra}.
We also prove that $\cG_{1,\al}$ is dense in $c(\bNz)$.

\begin{proof}[Proof of Theorem~\ref{thm:main}.]
By Proposition~\ref{prop:Ln_is_liminal},
$\cL_n$ is a unital C*-algebra of type I.
Their pure states are described in Proposition~\ref{prop:pure_states_of_Ln}.
Furthermore, by Proposition~\ref{prop:cX_subseteq_cL},
$\cX_{n,\al}$ is a C*-subalgebra of $\cL_n$.
Propositions~\ref{prop:separation_pure_states_same_frequency},
\ref{prop:purestate_indicator_function},
\ref{prop:separate_pure_states_associated_to_different_positive_frequencies},
\ref{prop:separate_pure_states_associated_to_different_negative_frequencies},
\ref{prop:separate_pure_states_associated_to_different_frequences_xi_eq_minus_eta},
\ref{prop:separate_pure_states_associated_to_different_frequences_xi_less_minus_eta}
show that $\cX_{n,\al}$ separates the pure states of $\cL_n$.
By Theorem~\ref{thm:Kaplansky},
$\cX_{n,\al}=\cL_n$.
\end{proof}

Now we will show that for $n\ge 2$ and some $\xi,\eta,u,v$ with $\xi\ne\eta$,
the generating class
$\cG_{n,\al}$ is not sufficient to separate $\purestate_{\xi,u}$ and $\purestate_{\eta,v}$.

\begin{prop}
\label{prop:pure_states_can_coincide_on_gammas}
Let $n\in\bN$, $n\ge 2$, $\xi=0$, $\eta=2$.
Then there exist $u,v$ in $\bS_n$
such that for every $a$ in $\BL$,
\[
\purestate_{\xi,u}(\ga(a))
=\purestate_{\eta,v}(\ga(a)).
\]
\end{prop}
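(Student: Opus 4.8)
The plan is to read both pure states off the integral formula of Lemma~\ref{lem:purestate_via_integral}. For $\xi=0$ and $\eta=2$ we have $d=n$ in both cases, so
\[
\purestate_{0,u}(\ga(a))
=\int_0^1 a(\sqrt t)\,|F_{\al,0,u}(t)|^2\,\dif t,
\qquad
\purestate_{2,v}(\ga(a))
=\int_0^1 a(\sqrt t)\,|F_{\al,2,v}(t)|^2\,\dif t,
\]
with $F_{\al,0,u}=\sum_{k=0}^{n-1}u_k\jac_k^{(\al,0)}$ and $F_{\al,2,v}=\sum_{k=0}^{n-1}v_k\jac_k^{(\al,2)}$. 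It therefore suffices to exhibit $u,v\in\bS_n$ with $|F_{\al,0,u}|=|F_{\al,2,v}|$ pointwise on $(0,1)$; the two integrals then coincide for \emph{every} generating symbol $a$, regardless of its values. Rather than argue from ``equality for all $a$'' back to a pointwise identity, I would go the other way and simply construct $u,v$ producing the pointwise identity directly.

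The key observation is that the two subspaces of $L^2(0,1)$ spanned by the relevant Jacobi functions share a nonzero element. By the definition~\eqref{eq:jac},
\[
\lin\{\jac_k^{(\al,0)}\colon 0\le k\le n-1\}
=\bigl\{(1-t)^{\al/2}p(t)\colon p\text{ a polynomial},\ \deg p\le n-1\bigr\},
\]
\[
\lin\{\jac_k^{(\al,2)}\colon 0\le k\le n-1\}
=\bigl\{(1-t)^{\al/2}\,t\,q(t)\colon q\text{ a polynomial},\ \deg q\le n-1\bigr\}.
\]
Set $h(t)\eqdef(1-t)^{\al/2}\,t$. Since $n\ge 2$, the monomial $t$ has degree $1\le n-1$, so $h$ lies in the first subspace; and taking $q\equiv 1$ shows $h$ lies in the second subspace as well. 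Let $\hat h\eqdef h/\|h\|_{L^2(0,1)}$.

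Because the systems $(\jac_k^{(\al,0)})_k$ and $(\jac_k^{(\al,2)})_k$ are orthonormal, the coordinate maps $w\mapsto\sum_k w_k\jac_k^{(\al,\be)}$ are isometries onto the respective spans. I would let $u$ be the coordinate vector of $\hat h$ in the basis $(\jac_k^{(\al,0)})_{k=0}^{n-1}$ and $v$ its coordinate vector in $(\jac_k^{(\al,2)})_{k=0}^{n-1}$; both are well defined since $\hat h$ belongs to both spans, and $\|u\|=\|v\|=\|\hat h\|=1$, so $u,v\in\bS_n$. By construction $F_{\al,0,u}=\hat h=F_{\al,2,v}$, whence $|F_{\al,0,u}|^2=|F_{\al,2,v}|^2$ and the claimed equality of pure states follows for all $a\in\BL$.

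There is no genuine analytic difficulty here once the shared vector $h$ is found; the whole content is the geometric fact that the $\xi=0$ and $\eta=2$ frequency subspaces overlap. The only step needing attention is verifying that $h$ belongs to both spans, which is exactly where $n\ge 2$ is used: for $n=1$ the $\xi=0$ subspace consists only of multiples of $(1-t)^{\al/2}$ and cannot contain $h$. I would also record that no separate balancing of the two sides is required, since $\|u\|=\|v\|$ holds automatically as both equal $\|\hat h\|$.
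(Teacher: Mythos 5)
Your proposal is correct and takes essentially the same approach as the paper: both reduce the statement to Lemma~\ref{lem:purestate_via_integral} and exhibit $u,v$ with $F_{\al,0,u}=F_{\al,2,v}$, the shared function being the normalization of $h(t)=(1-t)^{\al/2}t$, which lies in both frequency subspaces. In fact your $\hat{h}$ equals $\jac_0^{(\al,2)}$, so your coordinate-vector construction produces exactly the paper's explicit choice $v=e_0$ and $u=\bigl[\sqrt{(\al+3)/(2(\al+2))},\ \sqrt{(\al+1)/(2(\al+2))},\ 0,\ldots,0\bigr]^\top$; the paper merely verifies the identity by direct computation with the explicit Jacobi formulas, whereas you invoke the isometry of the coordinate maps, which avoids the computation and makes the role of $n\ge 2$ transparent.
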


\begin{proof}
We define $u$ and $v$ by
\[
u
=\left[\,\sqrt{\frac{\al+3}{2(\al+2)}},\ \sqrt{\frac{\al+1}{2(\al+2)}},\,0,\,\ldots,\,0\right]^\top,\qquad
v
=e_0
=\left[1,0,\ldots,0\right]^\top.
\]
Then $u,v\in\bS_n$.
Let $a\in\BL$.
We put $A\eqdef\ga(a)$
and represent $\purestate_{0,u}(A)$ and $\purestate_{2,v}(A)$ as in Lemma~\ref{lem:purestate_via_integral}:
\[
\purestate_{0,u}(A)
=
\int_0^1 a(\sqrt{t})\,|F_{\al,0,u}(t)|^2\,\dif{}t,
\qquad
\purestate_{2,v}(A)
=
\int_0^1 a(\sqrt{t})\,|F_{\al,2,v}(t)|^2\,\dif{}t.
\]
By~\eqref{eq:shifted_Jacobi_explicit},\eqref{eq:jac}, and \eqref{eq:jaccoef_via_binomial},
\[
\jac_0^{(\al,0)}(t)
=
\sqrt{\al + 1}\,(1 - t)^{\al/2},
\qquad
\jac_1^{(\al,0)}(t)
=\sqrt{\al + 3}\,(1 - t)^{\al/2} ((\al + 2)t -1),
\]
\[
\jac_0^{(\al,2)}(t)
=\sqrt{\displaystyle\frac{(\al+3)(\al+2)(\al+1)}{2}}\,(1 - t)^{\al/2}t.
\]
Thus, in our case, for every $t$ in $[0,1)$,
\begin{align*}
F_{\al,0,u}(t)
&=u_0 \jac_0^{(\al,0)}(t)
+u_1 \jac_1^{(\al,0)}(t)
\\[0.5ex]
&=
(1-t)^{\al/2}
\left(
\sqrt{\frac{(\al+1)(\al+3)}{2(\al+2)}}
+\sqrt{\frac{(\al+1)(\al+3)}{2(\al+2)}}
\,\bigl((\al+2)t-1)\bigr)
\right)
\\[0.5ex]
&=
\sqrt{\displaystyle\frac{(\al+3)(\al+2)(\al+1)}{2}}(1 - t)^{\al/2}t,
\end{align*}
and
\[
F_{\al,2,v}(t)
=\jac_0^{(\al,2)}(t)
=\sqrt{\displaystyle\frac{(\al+3)(\al+2)(\al+1)}{2}}(1 - t)^{\al/2}t.
\]
Hence, $F_{\al,0,u}=F_{\al,2,v}$
and
$\purestate_{0,u}(A)=\purestate_{2,u}(A)$.
\end{proof}

\begin{remark}
There are analogs of Proposition~\ref{prop:pure_states_can_coincide_on_gammas} for some other values of $\xi$ and $\eta$.
For example,
if $\eta\in\{1,\ldots,n-1\}$,
$\xi=-\eta$,
and $u=v=e_0$, then
$\purestate_{\xi,u}(\ga(a))
=\purestate_{\eta,v}(\ga(a))$ for every $a$ in $\BL$,
because $\ga(a)_\xi$ is just a leading principal submatrix of $\ga(a)_\eta$.
\end{remark}

\begin{proof}[Proof of Theorem~\ref{thm:closure_is_not_Cstar_algebra}]
Let $\xi=0$, $\eta=2$,
and $u,v\in\bS_n$ be as in Proposition~\ref{prop:pure_states_can_coincide_on_gammas}.
For every $A$ in $\cG_{n,\al}$,
\[
\purestate_{\xi,u}(A)
=\purestate_{\eta,v}(A).
\]
Since $\purestate_{\xi,u}$ and $\purestate_{\eta,v}$ are continuous function on $\cL_n$,
this equality holds for all $A$ in the closure of $\cG_{n,\al}$.
On the other hand, 
$\purestate_{\xi,u}$ and $\purestate_{\eta,v}$ are different elements of $\cL_n$, and they are separated by $\cL_n$.
More explicitly,
let $X_\eta\eqdef I_n$
and $X_j$ be the zero matrix for $j$ in $\Om_n\setminus\{\eta\}$.
Then $X\in\cL_n$,
$\purestate_{\xi,u}(X)=0$,
and $\purestate_{\eta,v}(X)=1$.
Hence, $X\in\cL_n\setminus\operatorname{clos}(\cG_{n,\al})$.
\end{proof}

\medskip
For $n=1$, the situation is simpler.

\begin{prop}
\label{prop:G1_is_dense_in_c}
For $n=1$, $\cG_{1,\al}$ is a dense subset of $c(\bNz)$.
\end{prop}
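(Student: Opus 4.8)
The plan is to exploit the fact that for $n=1$ the map $a\mapsto\ga_{1,\al}(a)$ is \emph{linear}, so $\cG_{1,\al}$ is a linear subspace of $\cL_1=c(\bNz)$ (the inclusion $\cG_{1,\al}\subseteq c(\bNz)$ is Proposition~\ref{prop:a_has_limit_implies_gamma_has_limit}). Since $\cG_{1,\al}$ is merely a subspace and not a subalgebra, Theorem~\ref{thm:main} does not by itself give the density of $\cG_{1,\al}$, so I would instead argue by the Hahn--Banach criterion: it suffices to show that every bounded functional on $c(\bNz)$ vanishing on $\cG_{1,\al}$ is the zero functional.

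First I would record the explicit scalar form. For $n=1$ the matrices are $1\times1$ and $Q_0^{(\al,\xi)}\equiv1$, so by~\eqref{eq:gamma_as_integral},
\[
\ga_{1,\al}(a)_\xi = \ka_\xi\int_0^1 a(\sqrt t)\,(1-t)^\al t^\xi\,\dif t,
\qquad
\ka_\xi\eqdef\bigl(\jaccoef{\al}{\xi}{0}\bigr)^2
=\frac{\Ga(\xi+\al+2)}{\Ga(\al+1)\,\Ga(\xi+1)}>0.
\]
Next I would invoke the standard description of the dual of $c(\bNz)$: every $\Lambda\in c(\bNz)^\ast$ has the form $\Lambda(x)=\sum_{\xi=0}^\infty\mu_\xi x_\xi+\nu\lim_{\xi\to\infty}x_\xi$ with $(\mu_\xi)\in\ell^1(\bNz)$ and $\nu\in\bC$. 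Assume $\Lambda$ vanishes on $\cG_{1,\al}$.

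The core step is to test $\Lambda$ against symbols vanishing near the boundary. For $a\in\BL$ with $a(r)=0$ for $r$ close to $1$ the limit term disappears, and after interchanging summation and integration I obtain
\[
0=\int_0^1 a(\sqrt t)\,(1-t)^\al\,\Phi(t)\,\dif t,
\qquad
\Phi(t)\eqdef\sum_{\xi=0}^\infty\mu_\xi\ka_\xi t^\xi.
\]
The interchange is legitimate because $\ka_\xi$ grows only polynomially (by~\eqref{eq:Wendel_rough}, $\ka_\xi\le(\xi+\al+2)^{\al+1}/\Ga(\al+1)$) while $t^\xi$ decays geometrically on the support of $a$, so $\sum_\xi|\mu_\xi|\ka_\xi t^\xi<\infty$; the same bound makes $\Phi$ continuous (indeed real-analytic) on $[0,1)$. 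Letting $t\mapsto a(\sqrt t)$ range over all continuous functions supported in $[0,1-\eps]$ forces the continuous function $(1-t)^\al\Phi(t)$ to vanish on $[0,1-\eps]$, hence $\Phi\equiv0$ on $[0,1)$ as $\eps\to0$. A power series that vanishes on an interval has zero coefficients, so $\mu_\xi\ka_\xi=0$, and since $\ka_\xi\ne0$ we get $\mu_\xi=0$ for all $\xi$. Finally, testing $\Lambda$ on the constant symbol $a=1$ (for which $\ga_{1,\al}(1)$ is the constant sequence $\mathbf 1$, by orthonormality of the $\jac_0^{(\al,\xi)}$) gives $\nu=0$. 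Thus $\Lambda=0$, and $\cG_{1,\al}$ is dense in $c(\bNz)$.

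I expect the main obstacle to be the passage from the vanishing of the weighted integral to $\Phi\equiv0$: one must check that the admissible test symbols (those $a\in\BL$ with $a(\sqrt{\cdot})$ continuous and supported away from $1$) are rich enough to annihilate only the zero weighted density, and that the polynomial bound on $\ka_\xi$ really dominates the tail so the sum--integral interchange is valid. The conceptual point worth stressing is that density of the subspace $\cG_{1,\al}$ is strictly stronger than the identity $\cX_{1,\al}=c(\bNz)$, and it holds here precisely because in the scalar case $n=1$ there are no coincidences among pure states of the kind exhibited in Proposition~\ref{prop:pure_states_can_coincide_on_gammas} for $n\ge2$.
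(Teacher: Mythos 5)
Your proposal is correct and follows essentially the same route as the paper: a Hahn--Banach duality argument in which a functional annihilating the (linear) image $\cG_{1,\al}$ is represented by an $\ell^1$ sequence, the same auxiliary power series is formed (your $\Phi$ is the paper's $f$, since $(\jaccoef{\al}{\xi}{0})^2=\Ga(\al+\xi+2)/(\Ga(\al+1)\,\xi!)$), a sum--integral interchange is justified, and analyticity forces all coefficients to vanish. The only differences are in execution: the paper first reduces to $c_0(\bNz)$ and $\BLZ$ (so the limit term never appears) and tests against the indicator symbols $1_{[0,s)}$, concluding via the fundamental theorem of calculus, whereas you keep the full dual of $c(\bNz)$, test against continuous symbols supported away from $1$, and eliminate the limit coefficient $\nu$ at the end using the constant symbol.
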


\begin{proof}
If $\om\in\bC$ and $a(r)=\om$ for every $r$ in $[0,1)$,
then $\ga_{1,\al}(a)_\xi=\om$ for every $\xi$ in $\bNz$.
Therefore, we consider the class of generating symbols
\[
\BLZ\eqdef
\left\{
a\in L^\infty([0,1))\colon\quad
\lim_{r\to 1}a(r)=0
\right\},
\]
and we will prove that
$\cG_{1,\al,0}\eqdef\{\ga_{1,\al}(a)\colon\ a\in\BLZ\}$ is a dense subset of $c_0(\bNz)$.
We have not found a recipe for a constructive approximation.
Instead of that, we use the following argument, similar to
\cite[proof of Theorem~4.8]{EsmeralMaximenkoVasilevski2015}
and
\cite[proof of Theorem~7.3]{EsmeralMaximenko2016}.
If the closure of $\cG_{1,\al,0}$ is not equal to $c_0(\bNz)$, then, by Hahn--Banach theorem, there exists a nonzero bounded linear functional on $c_0(\bNz)$ vanishing on $\cG_{1,\al,0}$.
Every bounded linear functional on $c_0(\bNz)$ 
can be represented as
\[
\ph_b(x)=\sum_{\xi=0}^\infty b_\xi x_\xi\qquad(x\in c_0(\bNz)),
\]
for some $b$ in $\ell^1(\bNz)$.
So, we suppose that $b\in\ell^1(\bNz)$ and for every $a$ in $\BLZ$,
\begin{equation}
\label{eq:sum_b_ga_is_zero}
\sum_{\xi=0}^\infty b_\xi \ga_{1,\al}(a)_\xi=0.
\end{equation}
Our goal is to prove that $b$ is the zero sequence.
We consider an auxiliar function $f\colon\bD\to\bC$,
\[
f(z)
\eqdef 
\sum_{\xi=0}^\infty
b_\xi\,
\frac{\Ga(\al+\xi+2)}{\Ga(\al+1)\,\xi!}\,
z^\xi.
\]
Stirling's formula and condition $b\in\ell^1(\bNz)$ imply that the series absolutely converges in the unit disk $\bD$.
Therefore,
$f$ is an analytic function on $\bD$.

For every $s$ in $(0,1)$,
we apply \eqref{eq:sum_b_ga_is_zero}
to $a=1_{[0,s)}$:
\[
0=
\sum_{\xi=0}^\infty
b_\xi\,
\frac{\Ga(\al+\xi+2)}{\Ga(\al+1)\,\xi!}
\int_0^{s^2}
(1-t)^\al t^\xi\,\dif{}t
=
\sum_{\xi=0}^\infty
\int_0^{s^2}
b_\xi\,
\frac{\Ga(\al+\xi+2)}{\Ga(\al+1)\,\xi!}
(1-t)^\al t^\xi\,\dif{}t
.
\]
The last series of integrals satisfies
\[
\sum_{\xi=0}^\infty
\int_0^{s^2}
\left|
b_\xi\,
\frac{\Ga(\al+\xi+2)}{\Ga(\al+1)\,\xi!}
(1-t)^\al t^\xi
\right|\dif{}t
\le
\sum_{\xi=0}^\infty
|b_\xi|
=\|b\|_1
<+\infty.
\]
Therefore, by a corollary from Lebesgue's dominated convergence theorem
(or by Fubini's theorem),
the integral interchanges with the series:
\[
0=
\int_0^{s^2}
\left(\,
\sum_{\xi=0}^\infty
b_\xi\,
\frac{\Ga(\al+\xi+2)}{\Ga(\al+1)\,\xi!}
(1-t)^\al t^\xi\right)
\dif{}t
=
\int_0^{s^2}
(1-t)^\al f(t)\,\dif{}t.
\]
By the first fundamental theorem of calculus,
$f(t)=0$ for every $t$ in $(0,1)$.
Therefore, all coefficients $b_\xi$ are zero.
\end{proof}

\section*{Acknowledgements}

The authors are grateful to Professor Armando S\'{a}nchez-Nungaray for explaining us many ideas in Section~\ref{sec:algebra_of_matrix_sequences_with_scalar_limits}.

\bigskip\noindent
Roberto Mois\'{e}s Barrera-Castel\'{a}n\newline
Instituto Polit\'{e}cnico Nacional\newline
Escuela Superior de F\'{i}sica y Matem\'{a}ticas\newline
Ciudad de M\'{e}xico\newline
Mexico\newline
e-mail: rmoisesbarrera@gmail.com \newline
https://orcid.org/0000-0001-9549-3482

\bigskip\noindent
Egor A. Maximenko\newline
Instituto Polit\'{e}cnico Nacional\newline
Escuela Superior de F\'{i}sica y Matem\'{a}ticas\newline
Ciudad de M\'{e}xico\newline
Mexico\newline
e-mail: egormaximenko@gmail.com, emaximenko@ipn.mx\newline
https://orcid.org/0000-0002-1497-4338

\bigskip\noindent
Gerardo Ramos-Vazquez\newline
Universidad Veracruzana\newline
Facultad de Matem\'{a}ticas\newline
Xalapa, Veracruz\newline
Mexico\newline
e-mail: ger.ramosv@gmail.com\newline
https://orcid.org/0000-0001-9363-8043

\end{document}